\numberwithin{equation}{section}
\theoremstyle{plain}
\newtheorem{theorem}{Theorem}[section]
\newtheorem{lemma}[theorem]{Lemma}
\newtheorem{prop}[theorem]{Proposition}
 \theoremstyle{definition}
\newtheorem{Def}[theorem]{Definition}
\newtheorem{Rem}[theorem]{Remark}
\newtheorem{?}[theorem]{Problem}
\newtheorem{assum}{Assumption}
\newcommand{\pa}{\partial}
\newenvironment{proof1}[1]{\begin{trivlist} \item[] {\em Proof of #1:}}{\newline \textcolor{white}{.}\hfill $\Box$
                      \end{trivlist}}
\begin{document}

\title{Nodal Set Openings on Perturbed Rectangular Domains}


\author[T. Beck]{Thomas Beck}
\email{tbeck7@fordham.edu}
\author[M. Gupta]{Marichi Gupta}
\email{marichi.gupta@gmail.com}
\author[J.L. Marzuola]{Jeremy Marzuola}
\email{marzuola@email.unc.edu}

\begin{abstract}
We study the effects of perturbing the boundary of a rectangle on the nodal sets of eigenfunctions of the Laplacian. Namely, for a rectangle of a given aspect ratio $N$, we identify the first Dirichlet mode to feature a crossing in its nodal set and perturb one of the sides of the rectangle by a close to flat, smooth curve. Such perturbations will often ``open'' the crossing in the nodal set, splitting it into two curves, and we study the separation between these curves and their regularity. The main technique used is an approximate separation of variables that allows us to restrict study to the first two Fourier modes in an eigenfunction expansion. We show how the nature of the boundary perturbation provides conditions on the orientation of the opening and estimates on its size. In particular, several features of the perturbed nodal set are asymptotically independent of the aspect ratio, which contrasts with prior works. Numerical results supporting our findings are also presented.
\end{abstract}

\maketitle

\section{Introduction}
Modes of vibration in various media is a fundamental physical phenomenon and mathematicians have developed sophisticated theories and tools to better understand it. Of particular interest are the points that stay fixed when the object vibrates at a fundamental frequency. These correspond to the zero sets of Laplace eigenfunctions, called \textit{nodal sets}. These nodal sets depend closely on the geometry of the vibrating object, and a natural question to study is how the object's shape influences the nodal sets. The nodal sets also define the \textit{nodal domains} of the eigenfunctions, which are the connected sets of points that do vibrate and are separated by the nodal sets. 

There are relations between the mode of vibration and the number of nodal domains. For instance, considering standing waves on a one-dimensional string, or more formally solutions to Sturm-Liouville problems on an interval, one sees that the $k^\text{th}$ mode will have $k$ nodal domains, and a nodal set consisting of $k-1$ fixed interior points. In particular, in these one-dimensional cases, the index of the mode, $k$, is equal to the number of nodal domains.

In higher dimensions, the relation between the index of the mode and its number of nodal domains becomes much more delicate. An early advancement along these lines was Courant's Nodal Domain Theorem, which states that, in any dimension, the $k^{\text{th}}$ eigenfunction of the Dirichlet Laplacian will have at most $k$ nodal domains \cite{courant}. Very often it will have less than $k$ nodal domains, and in fact using a construction of Stern, there is an infinite sequence of eigenfunctions on the square with exactly $2$ nodal domains (see Theorem 4.1 in \cite{berard2015dirichlet}). Whenever the Dirichlet mode has exactly $k$ nodal domains, we call the corresponding eigenfunction \textit{Courant Sharp}. In two or more dimensions there are only finitely many Courant Sharp modes, as shown by Pleijel  \cite{P56} in two dimensions, and B\'erard and Meyer \cite{berard1982courant} for general $n$-dimensional Riemannian manifolds, in contrast with the one-dimensional string, which has infinitely many. For simple domains, there is a complete catalogue of all Courant Sharp modes (for example, for the Dirichlet unit square, see \cite{berard2015dirichlet}).  Other examples focused on finding the Courant Sharp modes in simple geometries include \cite{berard2016courant,berard2020courant,helffer2010spectral,helffer2016nodal,lena2015courant}, as well as the survey \cite{bonnaillie2015nodal} and references therein. However, these examples are all highly symmetric, and their analysis relies on explicit and implicit formulas of the eigenfunctions and eigenvalues, often via a separation of variables. Beyond this, however, little is known about what makes a mode Courant Sharp, and the study of Courant Sharp eigenfunctions on a general domain is an active area of research. In particular, the nodal domains of eigenfunctions on a manifold (with or without boundary) or a graph play a role in spectral partitioning into sub-domains.  We refer the reader to \cite{helffer2010spectralsurv} and the references therein for an in depth discussion about spectral partitioning.  
	
Along a different direction of study, Grieser and Jerison developed in a series of papers a detailed description of the first and second Dirichlet eigenfunctions on convex domains of large eccentricity (\cite{grieser1996asymptotics}, \cite{grieser1998size}), along with techniques for performing a precise analysis of the eigenfunction on an approximately rectangular domain (\cite{grieser2009asymptotics}). These works were extended by the first and third authors with Canzani (\cite{BCM20}), to study the nodal set of the Laplacian on curvilinear rectangles with Dirichlet boundary conditions using similar techniques. Key results from \cite{BCM20} are bounds on the slope and curvature of the nodal line for the second Dirichlet eigenfunction.
	
\begin{figure}
\begin{tabular}{cc}
  \includegraphics[width=80mm,trim={1cm 1cm 1cm 1cm},clip]{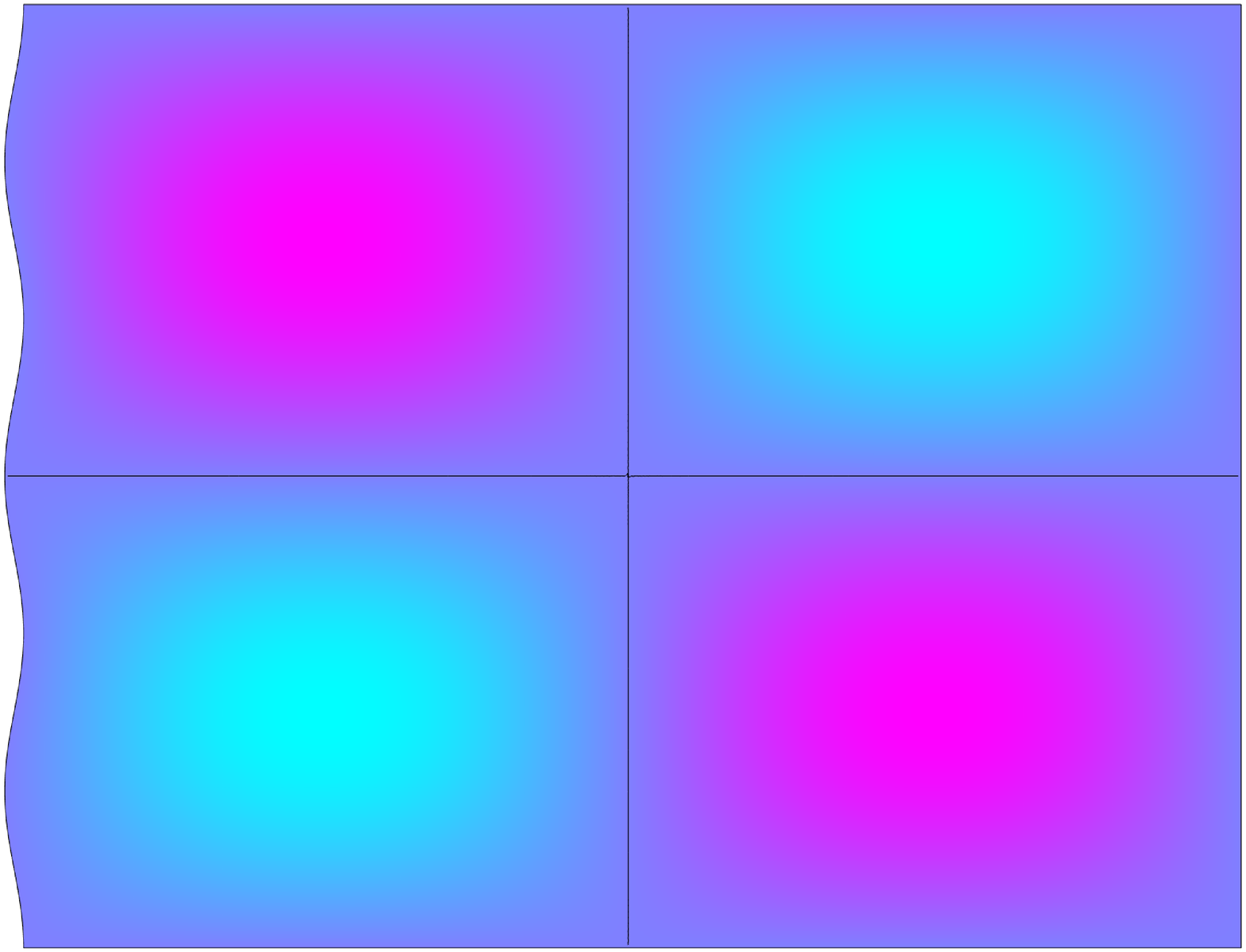} &   \includegraphics[width=62mm,trim={1cm 1cm 1cm 1cm},clip]{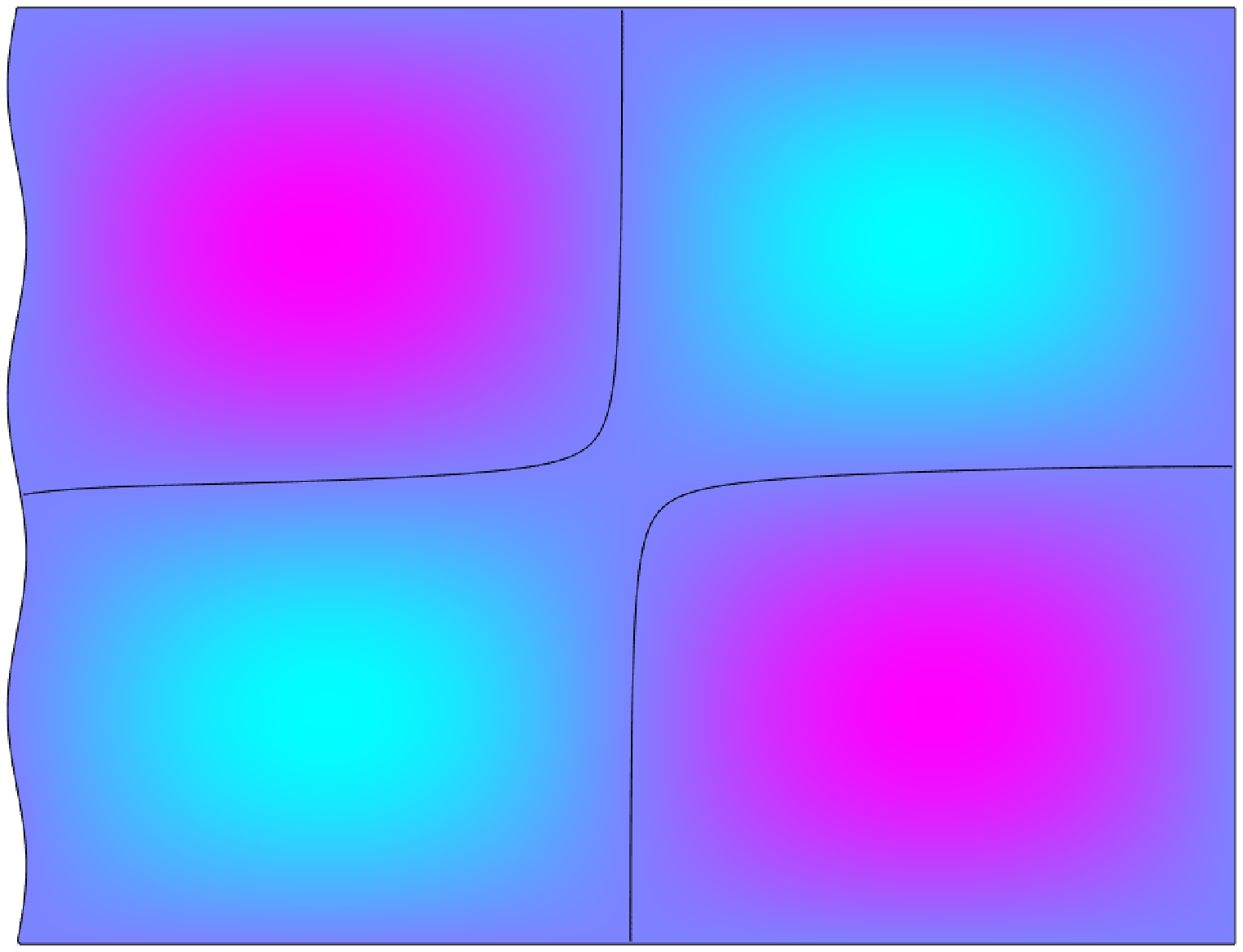} \\
(i)  & (ii)
\end{tabular}
\caption{Eigenfunctions on perturbed rectangles: (i), symmetric perturbation, aspect ratio $N = \sqrt{5/3}$, where the mode of study is Courant Sharp; (ii) asymmetric perturbation, $N = \sqrt{5/3}$.} 
\end{figure}	
	
The question explored here is a combination of the two earlier ideas - we seek to analyze how domain perturbation can decrease the number of nodal domains compared to the corresponding eigenfunction on the unperturbed domain.  That interior crossing points are unstable to perturbations has been known since the work of Uhlenbeck \cite{U76}, but here we hope to quantify that instability further. In particular, for a given rectangle of aspect ratio $N$, we consider the first eigenfunction with a crossing in its nodal set that partitions the rectangle into four nodal domains. We then perturb the boundary of the domain by an amount $\eta$ and study whether the crossing opens and if so, the scale of the opening. We note that the unperturbed eigenfunction of the rectangle is only the fourth eigenfunction for $1 \leq N \leq \sqrt{5/3}$, and so this unperturbed eigenfunction is not Courant Sharp for larger values of $N$. However, we expect the analysis to transfer over well to crossings that are present in the nodal sets of Courant Sharp eigenfunctions.  The work \cite{bogomolny2002percolation} and later \cite{beliaev2013bogomolny}  study the statistical likelihood of openings occurring at internal crossings of eigenfunctions due to perturbation using percolation theory, in order to make predictions about the asymptotic number of nodal domains for generic domains and eigenfunctions.

We make use of the techniques presented in \cite{BCM20} and \cite{grieser1996asymptotics}, including the use of the adiabatic ansatz, which is an approximate separation of variables as applied to our domain. The ansatz is a useful one for near-rectangular domains and has, for instance, been used in the study of mathematical billiards \cite{hillairet2012nonconcentration}. Using an approximate separation of variables to reduce to an associated ordinary differential operator has also been used in \cite{freitas-krej}, \cite{krej-tusek} to study eigenfunctions of the Dirichlet Laplacian on thin tubular neighborhoods of a fixed hypersurface, and in particular the structure of their nodal sets. This same idea also appears in  \cite{bor-frei}, \cite{fried-sol1}, \cite{fried-sol2} to establish asymptotics of the eigenvalues, eigenfunctions, and resolvent of the Dirichlet Laplacian in sequences of thin domains with increasing aspect ratios. The stability of properties of the nodal set is also studied in \cite{muk-saha21},\cite{muk-saha20}, where they consider low energy eigenfunctions on compact manifolds under small perturbations of the metric. They study the topology of the nodal sets, and in particular the stability of the Payne property, concerning whether the nodal set of the second eigenfunction touches the boundary.  Our results will provide both quantitative and qualitative control on the structure of the nodal set.

Numerical experiments reveal rich behavior of the eigenfunctions for differing boundary perturbations. 
For instance, as shown in Figure 1 (i-ii), the size of the opening is impacted by symmetry in the deformation. In (i), with a symmetric perturbation, there is no opening as the eigenfunction is odd with respect to the horizontal central line, while in (ii) an opening connects the two positive components of the eigenfunction with respect to our chosen sign orientation as we will fix below. 
Indeed, our results allow us to predict these properties for a given boundary perturbation and aspect ratio. Interestingly, we will see that the nodal set opening scale is independent of aspect ratio $N$, only depending on the perturbation size $\eta.$ This is in contrast with the prior works describing the nodal line in \cite{BCM20,grieser1996asymptotics,grieser1998size,grieser2009asymptotics} where the features of the nodal set depend on the aspect ratio. These prior works focus on eigenmodes with only vertical nodal lines in their nodal sets, while here, we have portions of the nodal set that run horizontally and with length comparable to the aspect ratio $N$. We attribute the lack of dependence of the size of the nodal set opening on $N$ to this fact. With these pictures in mind, we pay close attention to how geometric properties of the domain, such as symmetry in the perturbation and its relative size compared to the rectangle, influence the nodal set opening and the characteristics of the nodal set curves.

\subsection{Domain and Ansatz Description}

We start by defining the family of domains, $\Omega$, that are perturbations of the rectangle $[0,N]\times [0,1]$ for $N > 1$. Let $\phi_L(y)$ be a curve specifying the left side of $\Omega$, and write
\begin{equation}
\Omega = \{(x,y)\in \mathbb{R}^2 \,: \, y \in [0,1], \, \eta\phi_L(y) \leq x \leq N\}.
\end{equation} 
We require $\phi_L \in C^5([0,1]; \mathbb{R})$ with $\phi_L(0) = \phi_L(1) = 0$, and to ensure that the domain perturbation is small, we require that for all $y\in[0,1]$,
\begin{equation} \label{boundary-curve-specification}
-1 \leq \phi_L(y) \leq 0, 
\qquad |\phi_L^{(j)}(y)| \leq 1, \text{ for }1 \leq j\leq 5.
\end{equation}
Throughout, $\eta>0$ is a small constant and note that taking $\eta = 0$ recovers the rectangle $\Omega = [0,N]\times [0,1]$. We highlight two quantities: $N$, the aspect ratio on the unperturbed rectangle, and the ratio $\frac{\eta}{N}$, which describes the relative size of the side perturbation.  The estimates we have away from the left boundary in fact require far fewer than $5$ derivatives of $\phi_L$ to be bounded.  The regularity restriction we set here has more to do with carefully controlling the intersection of the nodal set with the left boundary.  We do not expect this regularity restriction on $\phi_L$ to be sharp for our results to hold however, but this is what we require for the techniques that we apply here.

We proceed to identify our eigenfunction of interest, $v$. This eigenfunction depends on $N$, $\eta$ and $\phi_L$, and in our results we will fix $\phi_L$, and take $N$ to be sufficiently large and $\eta$ sufficiently small. To start, consider the Laplacian with Dirichlet boundary conditions on the unperturbed rectangle, $[0, N] \times [0,1]$. That is, consider solutions to 
\begin{equation}
\begin{cases}
\Delta \varphi + \lambda \varphi = 0 \text{ in }[0, N] \times [0,1] \\
\varphi  = 0 \text{ on } \partial([0, N] \times [0,1]).
\end{cases}
\end{equation}
Up to a normalizing constant, solutions to this system are given by $\varphi_{n,m}(x,y) = \sin(\frac{n\pi}{N}x)\sin(m\pi y),$ with corresponding eigenvalues $\lambda_{n,m} = \pi^2(\frac{n^2}{N^2} + m^2)$.

We focus on $\varphi_{2,2}(x,y) =\sin(\frac{2\pi}{N}x)\sin(2\pi y)$, with eigenvalue $\lambda_{2,2} = 4\pi^2(\frac{1}{N^2} + 1)$. We then take $v(x,y)$ to be a Dirichlet eigenfunction for $\Omega$, in the same position of the spectrum as $\varphi_{2,2}$. See Assumption \ref{assum:simple} below for an assumption on $N$ that ensures that this eigenvalue is simple for all $\eta>0$ sufficiently small. This will ensure that for fixed $N$ and $\phi_L$,  $v(x,y) = v(x,y;\eta)$ depends smoothly on $\eta$, with $v(x,y;0) = \varphi_{2,2}(x,y)$. Letting $\mu$ be the eigenvalue of $v$, the eigenfunction $v$ then solves 
\begin{equation}
\begin{cases}
\Delta v + \mu v = 0 \text{ in }\Omega \\
v|_{\partial \Omega} = 0.
\end{cases}
\end{equation}
We will normalize $v$ to have $L^{\infty}$-norm equal to $1$, and as for $\varphi_{2,2}(x,y)$ to be negative at $(\tfrac{N}{4},\tfrac{3}{4})$, $(\tfrac{3N}{4},\tfrac{1}{4})$ and positive at $(\tfrac{N}{4},\tfrac{1}{4})$, $(\tfrac{3N}{4},\tfrac{3}{4})$.

Observe that the nodal set of $\varphi_{2,2}$ can be written as $\Gamma_{2,2} = \left([0,N] \times \{\frac{1}{2}\}\right) \cup  \left(\{\frac{N}{2}\} \times [0,1]\right)$. In other words, the nodal set of $\varphi_{2,2}$ consists of two lines that intersect in the center of the rectangle and partition it into four connected components. We are interested in determining whether the nodal set of $v$ maintains a crossing in its nodal set, and if it does not, how the size of the opening in the nodal set depends on $\eta$ and $\phi_L$. Moreover, we seek information in the orientation of the opening (that is, whether the connected component in the center runs from the lower-left to upper-right ends of $\Omega$ or the upper-left to lower-right).

In order for our choice of eigenfunction $v$ to be well-defined,  and for its nodal set to approach that of $\varphi_{2,2}$ as $\eta$ tends to $0$, we need to ensure that eigenvalue $\lambda_{2,2} = 4\pi^2\left(\tfrac{1}{N^2}+1\right)$ is simple. This eigenvalue fails to be simple when an eigenfunction $\varphi_{k,1}$, with eigenvalue $\lambda_{k,1}=\pi^2\left(\tfrac{k^2}{N^2}+1\right)$, leads to a repeated eigenvalue. To ensure that the eigenvalue $\mu$ is a simple eigenvalue of $\Omega$ for all $\eta>0$ sufficiently small, we therefore make the following assumption.
\begin{assum} \label{assum:simple}
The value of $N$ satisfies
\begin{align*}
    \min_{k}\left|4\left(\tfrac{1}{N^2}+1\right) - \left(\tfrac{k^2}{N^2}+1\right) \right| =     \min_{k}\left|3 + \tfrac{4}{N^2} - \tfrac{k^2}{N^2} \right| \neq 0.
\end{align*}
\end{assum}
In other words, this assumption implies that $N$ is not equal to a sequence of values $N_3 = \sqrt{5/3}$, $N_4=2, \ldots$, where the above quantity vanishes for $k=3,4,\ldots$.

Before we state our main results, it will be helpful to describe our eigenfunction decomposition. For fixed $x\in[0,N]$, we apply a Fourier series in $y\in[0,1]$ to write
\begin{equation} \label{ansatz}
v(x,y) =\sum_{k=1}^{\infty}v_k(x)\sin(k\pi y) =  v_1(x) \sin(\pi y) + v_2(x) \sin(2\pi y) + E(x,y),
\end{equation}
 where for $k \geq 0$,
\begin{equation} \label{mode-inner-product}
v_k(x) = 2 \int_0^1 v(x,y) \sin(k\pi y)\, dy.
\end{equation}
The term $E(x,y) = \sum_{k \geq 3} v_k(x) \sin(k \pi y)$ contains all of the higher modes, and we will see that $E(x,y)$ is bounded by a multiple of $\eta/N$ in $\Omega$, and is exponentially small in $N$ away from the left and right boundaries of $\Omega$. When $\eta=0$, $v_1(x)$ and $E(x,y)$ are identically zero, and we will see that for $\eta>0$ sufficiently small, $v_2(x) \sin(2\pi y)$ remains the main contribution to $v$, and that the nodal set of $v_2(x) \sin(2\pi y)$ maintains a crossing. It will be the first Fourier mode $v_1(x)$ that controls the potential opening of the nodal set of $v$. This mode satisfies the ODE
\begin{align}
v_1''(x) + \mu^2_1 v_1(x) = 0,
\end{align}
where the coefficient $\mu_1$ satisfies
\begin{align*}
    \mu_1^2 = \mu - \pi^2.
\end{align*}
When $\eta=0$, $\mu_1^2$ can be explicitly computed and is given by
\begin{align} \label{eqn:mu1-zero}
    \mu_1(0)^2 = \lambda_{2,2} - \pi^2 = \pi^2\left(3 + \tfrac{4}{N^2}\right).
\end{align}
Note that Assumption \ref{assum:simple} ensures that $\mu_1(0)$ is bounded away from an integer multiple of $\tfrac{\pi}{N}$.

The following proposition provides the bounds and properties stemming from the decomposition made in (\ref{ansatz}), (\ref{mode-inner-product}) that we will require. We will see that the modes $v_1(x)$ and $v_2(x)$ are given by trigonometric functions with coefficients depending on their values at $x=0$ and $x=N$. Therefore, we extend their definitions to $-\eta<x<0$ so that the error $E(x,y)$ is defined for all $(x,y)\in\Omega$. 

\begin{prop}\label{bounds-prop}
There exist constants $c$, $C$, and $\eta_0>0$ such that for all $N > 4$ and $0<\eta < \eta_0$, the following holds:
\begin{enumerate}

    \item\label{v1-prop} The first Fourier mode $v_1(x)$ satisfies the estimates
    \begin{align*}
        \big|v_1^{(j)}(x)\big| \leq C\eta/N \text{ for } 0\leq j\leq3.
    \end{align*}
    Moreover, under Assumption \ref{assum:simple}, 
    \begin{align*}
        C^{-1}|v_1(0)| \leq |v_1(\tfrac{N}{2})| \leq C|v_1(0)|.
    \end{align*}
    
    \item\label{v2-prop} The second Fourier mode $v_2(x)$ satisfies the estimates
    \begin{align*}
    \left|v_2^{(j)}(x) - \tfrac{d^j}{dx^j}\left(\sin\left(\tfrac{2\pi}{N}x\right)\right)\right| \leq C\eta N^{-j-1} \text{ for } 0\leq j\leq 3.
    \end{align*}
    In particular, there exists a unique $x^*\in\big(\tfrac{N}{2}-C\eta,\tfrac{N}{2}+C\eta\big)$ such that $v_2(x^*) = 0$.
    
    \item\label{E-prop} The error $E(x,y)$ satisfies
    \begin{align*}
        |E(x,y)| + |\nabla E(x,y)| \leq C\eta/N \text{ for all } (x,y)\in\Omega,
    \end{align*}
    together with estimates away from the corners of $\Omega$ given by
    \begin{align*}
        \sup_{ y\in[1/10,9/10]}\left|\nabla^j E(x,y) \right| \leq C\eta/N, \quad \sup_{x\in [N/4,3N/4]} \Big| \nabla^j E(x,y) \Big| \leq C\eta e^{-cN} \text{ for } 0 \leq j \leq 3.
    \end{align*}
    
\end{enumerate}
\end{prop}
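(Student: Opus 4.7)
The strategy is to diagonalize the eigenvalue equation in $y$ by Fourier series, write each mode explicitly via the ODE it satisfies, and then use the Dirichlet condition on the left curve to couple and control the boundary values $v_k(0)$.

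Since $\phi_L\leq 0$, the strip $[0,N]\times[0,1]$ lies inside $\Omega$, and on it $v(x,y)=\sum_{k\geq 1}v_k(x)\sin(k\pi y)$; substituting into $\Delta v+\mu v=0$ gives the decoupled ODEs $v_k''+(\mu-k^2\pi^2)v_k=0$ subject to $v_k(N)=0$. Setting $\mu_k^2=\mu-k^2\pi^2$, this coefficient is positive for $k=1,2$ (oscillatory) and negative with $\beta_k:=\sqrt{k^2\pi^2-\mu}\asymp k\pi$ for $k\geq 3$ (hyperbolic), producing the explicit forms
\[
v_k(x)=v_k(0)\frac{\sin(\mu_k(N-x))}{\sin(\mu_k N)}\ (k=1,2), \qquad v_k(x)=v_k(0)\frac{\sinh(\beta_k(N-x))}{\sinh(\beta_k N)}\ (k\geq 3).
\]
Every estimate in the proposition thus reduces to controlling the boundary values $v_k(0)$.

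To estimate $v_k(0)$ I would impose the remaining Dirichlet condition $v(\eta\phi_L(y),y)=0$, Taylor-expand each $v_k$ about $x=0$, and project onto $\sin(m\pi y)$ to get the linear system
\[
v_m(0)+\eta\sum_{k\geq 1}v_k'(0)\,\Phi_{k,m}+O(\eta^2)=0,\qquad \Phi_{k,m}=2\int_0^1\phi_L(y)\sin(k\pi y)\sin(m\pi y)\,dy.
\]
At $\eta=0$ the only nonzero $v_k'(0)$ is the one for $k=2$, namely $2\pi/N$, so iterating once yields $v_m(0)=-\eta(2\pi/N)\Phi_{2,m}+O(\eta^2)$ for every $m$. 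In parallel, Hadamard's variational formula gives $\mu-\lambda_{2,2}=O(\eta/N^2)$, hence $\mu_2=2\pi/N+O(\eta/N^2)$ and $\mu_1=\pi\sqrt{3+4/N^2}+O(\eta)$. Making this rigorous is the main technical point: the $O(\eta^2)$ remainder couples higher derivatives of infinitely many modes, and I would close it by combining smooth $\eta$-dependence of $v$ (from simplicity of $\mu$, Assumption \ref{assum:simple}) with a contraction mapping in a weighted $\ell^\infty$-space of boundary data, whose summability in $k$ comes from $\phi_L\in C^5$ (ensuring $|\Phi_{k,m}|\lesssim k^{-5}$).

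With the input $|v_k(0)|\leq C\eta/N$ the three items follow from the explicit formulas. For \emph{(1)}: since $\mu_1=O(1)$ and $\sin(\mu_1 N)$ is bounded below by Assumption \ref{assum:simple}, differentiation costs only an $O(1)$ factor, giving $|v_1^{(j)}(x)|\leq C\eta/N$; the comparison $|v_1(0)|\asymp|v_1(N/2)|$ reduces via $v_1(N/2)/v_1(0)=1/(2\cos(\mu_1 N/2))$ to the fact that $\cos(\mu_1 N/2)$ is bounded away from $0$, since its vanishing would also force $\sin(\mu_1 N)=0$, excluded by the assumption. For \emph{(2)}: writing $v_2(x)=A\sin(\mu_2 x)+v_2(0)\cos(\mu_2 x)$ with $A=1+O(\eta/N)$ and $\mu_2-2\pi/N=O(\eta/N^2)$, each $x$-derivative of $v_2(x)-\sin(2\pi x/N)$ picks up an additional factor $1/N$, yielding the stated bound $C\eta N^{-j-1}$, and the unique zero $x^*$ near $N/2$ follows from the inverse function theorem since $|v_2'(N/2)|\asymp 1/N$. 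For \emph{(3)}: the bound $|E|+|\nabla E|\leq C\eta/N$ on all of $\Omega$ follows from $|v_k(x)|\leq|v_k(0)|\leq(C\eta/N)|\Phi_{2,k}|$ and summability $\sum_k k^j|\Phi_{2,k}|<\infty$ coming from $\phi_L\in C^5$; the sharper interior bound $C\eta e^{-cN}$ uses $\sinh(\beta_k(N-x))/\sinh(\beta_k N)\lesssim e^{-\beta_k\min(x,N-x)}$ with $\beta_k\geq\beta_3\geq\pi\sqrt{5-4/N^2}$, which absorbs the $k^j$ cost of up to three derivatives whenever $x\in[N/4,3N/4]$.
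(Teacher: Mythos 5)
Your overall architecture---solve the mode ODEs explicitly with $v_k(N)=0$, reduce everything to bounding the boundary data $v_k(0)$, and extract the leading term $v_m(0)\approx-\eta\,v_2'(0)\,\Phi_{2,m}$ from the Dirichlet condition on the left curve---is exactly the paper's, and your treatment of items (1) and (2) from the explicit formulas is fine (modulo the unjustified assertion $A=1+O(\eta/N)$, which requires the $L^\infty$-normalization and sign convention for $v$, not just the linear system). The genuine gap is in how you close the estimate on $\{v_k(0)\}$, and in item (3). Your contraction argument rests on the claim $|\Phi_{k,m}|\lesssim k^{-5}$, which is false: the hypotheses give only $\phi_L(0)=\phi_L(1)=0$ with bounded derivatives, so integration by parts leaves boundary terms and $\Phi_{k,m}=\int_0^1\phi_L(y)\left[\cos((k-m)\pi y)-\cos((k+m)\pi y)\right]dy$ decays only like $\min(|k-m|,k+m)^{-2}$; in particular $\Phi_{k,k}\to\int_0^1\phi_L\neq0$ in general. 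Since $v_k'(0)=-\beta_k\coth(\beta_kN)\,v_k(0)\approx-k\pi\,v_k(0)$ for $k\geq3$, the coupling term in your linear system is of size $\eta\sum_kk\,|v_k(0)|\,|\Phi_{k,m}|$, which with the true decay of $\Phi_{k,m}$ grows like $\eta\,m\,\sup_k|v_k(0)|$ and is therefore not a contraction uniformly in $m$. The paper confronts exactly this $k^2\eta^2$-type loss and closes it by combining the boundary-integral identity with a second, independent estimate $|v_k(0)|\leq Ck^{-1}(\eta/N+B)$ (from integrating $\partial_yv(0,\cdot)$ against $\cos(k\pi y)$), splitting the sum over $k$ at $k\sim\eta^{-1/2}$, and using a maximum-principle comparison function to obtain the a priori pointwise bound on the remainder near the left boundary that the identity requires. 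Your sketch contains no substitute for these steps.

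Relatedly, item (3) cannot be obtained by termwise summation as you propose. Even with the optimal decay $|v_k(0)|\leq C\eta/(Nk)$, the sums $\sum_kk^j|v_k(0)|$ diverge for $j\geq1$ near $x=0$, where the hyperbolic factors provide no decay, and $\sum_kk^3|\Phi_{2,k}|$ diverges for generic admissible $\phi_L$. The paper instead derives $|E|+|\nabla E|\leq C\eta/N$ on all of $\Omega$, and the bounds on $\nabla^jE$ for $j\leq3$ when $y\in[1/10,9/10]$, from the equation $(\Delta+\mu)E=0$ together with interior and boundary elliptic estimates (this is where $\phi_L\in C^5$ is actually used), reserving the termwise exponential argument for $x\in[N/4,3N/4]$, where your computation is correct.
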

We will prove this proposition in Section \ref{sec:ansatz}, using the decomposition of $v$ given in \eqref{ansatz}, together with the estimate on the Fourier coefficients $v_k(0)$ given in the following proposition:
\begin{prop} \label{hadamard-prop}
There exists a constant $C$ such that for all $k \geq 1$, 
\begin{equation} \label{boundary-integral}
\left|v_k(0) - \frac{4\pi\eta}{N} \int_0^1 \phi_L(y) \sin(2\pi y)\sin(k \pi y) dy\right| \leq C(\eta^{2}/N+k^2\eta^3/N).
\end{equation}
\end{prop}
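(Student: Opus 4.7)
The plan is to derive a Hadamard-type expansion for $v_k(0)$ by combining the Dirichlet boundary condition $v(\eta\phi_L(y),y)=0$ with a Taylor expansion of $v$ in the $x$-variable around $x=0$. Since $\phi_L\leq 0$, the point $(0,y)$ lies in $\overline{\Omega}$, and standard elliptic regularity (using $\phi_L\in C^5$ and boundedness of the eigenvalue $\mu$) provides uniform bounds on $\|\partial^\alpha v\|_{L^\infty(\Omega)}$ for $|\alpha|\leq 3$. Expanding to third order and solving for $v(0,y)$ gives
\begin{equation*}
v(0,y) = -\eta\phi_L(y)\,v_x(0,y) - \tfrac12\eta^2\phi_L(y)^2\,v_{xx}(0,y) + R_3(y),\quad |R_3(y)|\leq C\eta^3.
\end{equation*}
Multiplying by $2\sin(k\pi y)$ and integrating over $y\in[0,1]$ then decomposes $v_k(0)$ into three pieces aligned with the orders of $\eta$.

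For the leading piece I would replace $v_x(0,y)$ by its $\eta=0$ analogue $\tfrac{2\pi}{N}\sin(2\pi y)=\partial_x\varphi_{2,2}(0,y)$. The required estimate $\|v_x(0,\cdot)-\tfrac{2\pi}{N}\sin(2\pi\cdot)\|_\infty\leq C\eta/N$ reflects the closeness of $v$ to $\varphi_{2,2}$, with the extra $1/N$ factor coming from the fact that $v_x(0,\cdot)$ is itself of order $1/N$ (visible, for instance, via the Fourier expansion $v_x(0,y)=\sum_k v_k'(0)\sin(k\pi y)$ and the ODE $v_k''+(\mu-k^2\pi^2)v_k=0$ satisfied by each Fourier mode). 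Substituting produces the stated leading term up to an additive error of size $C\eta^2/N$, using $\|\phi_L\|_\infty\leq 1$.

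The main technical difficulty, and the obstacle I would expect to take the most care, is sharpening the quadratic and cubic pieces to size $Ck^2\eta^3/N$: a brute-force $L^\infty$ bound gives only $O(\eta^2)$ and $O(\eta^3)$, which is too crude. I would exploit two structural features: the endpoint vanishing $\phi_L(0)=\phi_L(1)=0$, and the exchange of $x$-derivatives for $y$-derivatives via the PDE $\Delta v+\mu v=0$. For the quadratic piece, write $v_{xx}(0,y)=-v_{yy}(0,y)-\mu v(0,y)$ and integrate $\int_0^1\phi_L^2(y)v_{yy}(0,y)\sin(k\pi y)\,dy$ by parts twice; boundary terms vanish since $\phi_L(0)=\phi_L(1)=0$, and the resulting coefficient $(\phi_L^2\sin(k\pi y))''$ has magnitude $O(k^2)$. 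Paired with the bootstrap bound $\|v(0,\cdot)\|_\infty\leq C\eta/N$ (a byproduct of the leading-order step) this yields $Ck^2\eta^3/N$; the $\mu v$ contribution gives a smaller $O(\eta^3/N)$ term. For the third-order remainder $R_3$ the same strategy is iterated: Taylor expand one step further, substitute $v_{xxx}=-v_{xyy}-\mu v_x$, and integrate by parts twice in $y$, now using that $v_x(x,0)=v_x(x,1)=0$ (from $v\equiv 0$ on the top and bottom boundaries) to kill boundary terms, and pairing with $\|v_x(0,\cdot)\|_\infty\leq C/N$. The genuine fourth-order Taylor remainder is $O(\eta^4)$ and is absorbed by $C(\eta^2/N+k^2\eta^3/N)$ for $\eta$ sufficiently small, completing the estimate.
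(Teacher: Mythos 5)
Your strong-form approach (Taylor expanding $v$ in $x$ from the interior line $x=0$ to the boundary curve $x=\eta\phi_L(y)$) is a genuine alternative to the paper's argument, which instead applies Green's identity on the sliver $\Omega_0=\{\eta\phi_L(y)\le x\le 0\}$ against the test function $x\sin(k\pi y)$: that weak formulation needs only one derivative of $v$ at the boundary, and the $k^2\eta^3/N$ term arises there from the area integral $(\mu-k^2\pi^2)\int_{\Omega_0}v\,x\sin(k\pi y)$. Your leading-order step is sound once the estimate $\|v_x(0,\cdot)-\tfrac{2\pi}{N}\sin(2\pi\cdot)\|_\infty\le C\eta/N$ is taken from Proposition \ref{bounds-prop}; the Fourier-series justification you sketch does not by itself suffice, since the $\ell^2$ bound on the coefficients $v_k(0)$ does not control $\sum_k k|v_k(0)|$, and one really needs the pointwise gradient bound on the error $E$. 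One remark on signs: your expansion gives $v(0,y)=-\eta\phi_L(y)v_x(0,y)+\dots$, hence a main term $-\tfrac{4\pi\eta}{N}\int_0^1\phi_L\sin(2\pi y)\sin(k\pi y)\,dy$; testing against a flat translate $\phi_L\equiv\mathrm{const}<0$ confirms this is the correct sign, so the discrepancy with the displayed statement appears to be a sign slip in the paper (traceable to its stated normal vector), and is harmless downstream since only $|v_k(0)|$ is used.

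There are two genuine gaps. First, the claimed uniform bounds on $\|\partial^\alpha v\|_{L^\infty(\Omega)}$ for $|\alpha|\le3$ fail: at the two corners where the curved left side meets $y=0$ and $y=1$ the interior angle is $\pi/2+O(\eta)$, so second and third derivatives of $v$ are generally unbounded there (the paper's \eqref{eqn:B-bounds2} is correspondingly restricted to $y\in[1/10,9/10]$). This is repairable because $\phi_L(0)=\phi_L(1)=0$ and $|\phi_L'|\le1$ give $|\phi_L(y)|\le\min(y,1-y)$, which more than offsets the mild corner singularities in your integrals and Taylor remainders, but the point must be addressed. Second, and more seriously, your error accounting does not close: the fourth-order Taylor remainder is $O(\eta^4)$, and $\eta^4$ is \emph{not} bounded by $C(\eta^2/N+k^2\eta^3/N)$ uniformly in $N$ (take $N\gg\eta^{-2}$), while iterating your PDE-plus-integration-by-parts device at fourth order produces a $k^4$ factor, which also exceeds the target. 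The fix is to stop at third order and use that near the left boundary $v=v_2(x)\sin(2\pi y)+B$ with $|v_2^{(j)}(x)|\le CN^{-j}$ and $|\nabla^jB|\le C\eta/N$ for $j\le3$, so that $|\partial_x^2v|,|\partial_x^3v|\le C/N$ there; then the quadratic term and the cubic remainder are already $O(\eta^2/N)$ and $O(\eta^3/N)$ outright, with no PDE substitution and no fourth-order expansion needed. With that modification (and the corner caveat) your route does prove the estimate, in fact with error $C\eta^2/N$ alone.
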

We will also prove this proposition in Section \ref{sec:ansatz}, using a Hadamard variation argument. 
When $v_1(0)\neq0$, we will see that the nodal set of $v$ no longer has a crossing, and we will obtain a detailed description of how the splitting of the nodal set occurs (see Theorem \ref{thm:nodal-gap-estimate} below). Motivated by Proposition \ref{hadamard-prop}, we therefore place the following generic assumption on the left boundary function $\phi_L(y)$.
\begin{assum} \label{assum:phi}
The function $\phi_L(y)$ satisfies
\begin{align*}
   \int_0^1 \phi_L(y) \sin(2\pi y)\sin(\pi y) \,dy \neq 0.
\end{align*}
\end{assum}

By Proposition \ref{hadamard-prop}, this assumption ensures that for $\eta>0$ sufficiently small, the first Fourier mode satisfies
\begin{align} \label{eqn:phi1}
   C^{-1}\eta/N \leq |v_1(0)| \leq C\eta/N.
\end{align}
From now on, we will assume that the two assumptions hold, and all constants appearing in our estimates will depend on a lower bound on the minimum and integral in Assumptions \ref{assum:simple} and \ref{assum:phi} respectively. We will use the notation $A\sim B$, when $|A/B|$ is bounded from above and below by constants depending only on these lower bounds. Our first main theorem concerns the location of the nodal set of $v$.
\begin{theorem} \label{thm:nodal-gap-estimate}
There exist constants $\eta_0>0$ and $N_0>1$, $A_0>0$, such that for all $0<\eta<\eta_0$, $N>N_0$, the nodal set of $v$ has the following properties: 
\begin{enumerate}
    \item[i)] It consists of two curves which are a distance $\sim\sqrt{\eta}$ apart, and separates $\Omega$ into three connected components.
    \item[ii)] There exists a strip of width $\sim\sqrt{\eta}$ passing through the center $\left(\tfrac{N}{2},\tfrac{1}{2}\right)$ in the direction of either $[1,1]^{T}$ or $[1,-1]^{T}$ which is disjoint from the nodal set.
    \item[iii)] For $x\in[\tfrac{N}{2}-\tfrac{1}{10},\tfrac{N}{2}+\tfrac{1}{10}]$, the eigenfunction $v$ does not vanish whenever $(x,y)$ satisfies
    \begin{align*}
        \left|x-\tfrac{N}{2}\right|\left|y- \tfrac{1}{2}\right| \geq A_0\eta.
    \end{align*}
    \item[iv)] For $x\notin[\tfrac{N}{2}-\tfrac{1}{10},\tfrac{N}{2}+\tfrac{1}{10}]$, the eigenfunction $v$ does not vanish whenever $(x,y)$ satisfies
    \begin{align*}
        \left|y- \tfrac{1}{2}\right| \geq \frac{A_0\eta}{N\left|\sin\left(\tfrac{2\pi x}{N}\right)\right| + 1}.
    \end{align*}
\end{enumerate} 
\end{theorem}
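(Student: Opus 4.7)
The plan is to analyze the nodal set through the decomposition $v(x,y) = v_1(x)\sin(\pi y) + v_2(x)\sin(2\pi y) + E(x,y)$ from Proposition \ref{bounds-prop}, using that $v_1$ is everywhere of order $\eta/N$ by Proposition \ref{bounds-prop}(\ref{v1-prop}) together with \eqref{eqn:phi1}, that $v_2$ nearly equals $\sin(2\pi x/N)$ with a unique zero $x^*\approx N/2$ by Proposition \ref{bounds-prop}(\ref{v2-prop}), and that $E$ is exponentially small in the interior. The picture to aim at is that the two-mode truncation $v_1(x)\sin(\pi y)+v_2(x)\sin(2\pi y)$ fully controls the nodal set: near the center it is a hyperbola-like curve with small asymptotic size, and away from the center it straightens out into a pair of nearly horizontal graphs.

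Near the center, I would set $u=x-x^*$, $s=y-1/2$ and Taylor expand, using $v_2'(x^*) = -2\pi/N+O(\eta/N^2)$, $\sin(2\pi y)=-2\pi s+O(s^3)$, $\sin(\pi y)=1+O(s^2)$, and $v_1(x)=v_1(x^*)+O(\eta|u|/N)$, to obtain
\begin{equation*}
v(x,y) = v_1(x^*) + \tfrac{4\pi^{2}}{N}\,u\,s \;+\; (\text{cubic and error terms}) + E(x,y).
\end{equation*}
On the box $|u|,|s|\leq 1/10$, Proposition \ref{bounds-prop}(\ref{E-prop}) makes $E$ exponentially small in $N$, so the nodal equation reduces to $us = -\tfrac{N\,v_1(x^*)}{4\pi^{2}}+ (\text{smaller})$, with right-hand side of size $\sim \eta$ and a definite sign determined by $\mathrm{sgn}(v_1(x^*))$. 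That sign (in turn determined by the boundary integral in Proposition \ref{hadamard-prop} via Assumption \ref{assum:phi}) selects either $\{us>0\}$ or $\{us<0\}$, yielding orientation (ii). Applying the implicit function theorem, with $\partial_s v$ comparable to $|u|/N$ wherever $|u|\geq A_0\sqrt{\eta}$ and $\partial_u v$ comparable to $|s|/N$ wherever $|s|\geq A_0\sqrt{\eta}$, gives two smooth nodal curves in this box with $|u||s|\sim\eta$. This proves (iii), and reading off the minimum distance between the two hyperbolic branches (attained along the diagonal $|u|=|s|\sim\sqrt{\eta}$) gives the separation estimate in (i) and the clear strip in (ii).

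For (iv), I would factor $v(x,y) = v_2(x)\big[\sin(2\pi y) + \tfrac{v_1(x)}{v_2(x)}\sin(\pi y)\big] + E$. For $x\notin[N/2-\tfrac{1}{10},N/2+\tfrac{1}{10}]$, Proposition \ref{bounds-prop}(\ref{v2-prop}) gives $|v_2(x)|$ comparable to $|\sin(2\pi x/N)|+N^{-1}$, so $\partial_y v$ keeps a fixed sign near $y=1/2$ and the nodal set is a single smooth graph $y=y(x)$ satisfying $\sin(2\pi y) = -\tfrac{v_1(x)}{v_2(x)}\sin(\pi y) + O(E/v_2)$. Inserting $|v_1|\lesssim \eta/N$ and the interior/exterior bounds on $E$ gives $|y-\tfrac12|\lesssim \eta/(N|\sin(2\pi x/N)|+1)$, which is exactly (iv). Matching these two outer graphs to the two hyperbolic branches from the previous step at $|u|\sim 1/10$ then shows the full nodal set consists of exactly two curves running from the bottom of $\Omega$ to the top, producing three connected components as claimed in (i). The main obstacle is the transition regime $|u|\sim\sqrt{\eta}$ where the first-order Taylor expansion barely dominates and one must track the quadratic corrections carefully to rule out extra nodal components and to verify that the two local hyperbolic branches extend globally to the outer graphs; a related subtlety is controlling how the nodal curves meet the perturbed left boundary $x=\eta\phi_L(y)$, which is where the $C^5$ regularity hypothesis on $\phi_L$ is used.
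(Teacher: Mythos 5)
Your strategy is essentially the paper's: a bilinear/quadratic approximation of $v$ near the center producing a model hyperbola $us\sim\pm\eta$ with opening $\sim\sqrt{\eta}$ and orientation set by $\mathrm{sgn}(v_1)$, plus the decomposition $v=v_2(x)\sin(2\pi y)+(v_1(x)\sin(\pi y)+E)$ away from the center to get the location bounds in (iii)--(iv). The paper implements the center step slightly differently (a full second-order Taylor polynomial $P(x,y)$ at $(\tfrac{N}{2},\tfrac12)$ on a disc of radius $\eta^{2/5}$, followed by a quantitative perturbation $\tilde x=x(y)+w(y)$ with $|w|\leq C\eta^{11/10}$), but your version centered at $(x^*,\tfrac12)$ keeping only the cross term is the same idea, since the diagonal Hessian entries are $O(\eta/N)$ versus the cross term's $O(1/N)$ and only tilt the axes by $O(\eta)$.

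There is one concrete region your outline does not cover, and it is needed for the claim in (i) that the nodal set consists of \emph{exactly} two curves giving three components. In the corners of $\Omega$ (say $x<1$ and $|y-\tfrac12|\geq\tfrac14$), both $|\sin(2\pi x/N)|$ and the lower bound $|\sin(2\pi y)|\geq 4|y-\tfrac12|$ degenerate, so neither your factorization for (iv) nor the center analysis rules out small extra nodal components there. The paper excludes these by a domain-monotonicity argument: any nodal domain confined to a strip of height $\tfrac14$ would have first Dirichlet eigenvalue at least $16\pi^2>\mu$, a contradiction. Relatedly, the full strength of (iv) for $x<1$ (where $N|\sin(2\pi x/N)|$ is $O(1)$, so the claim is $|y-\tfrac12|\geq A_0\eta$) requires the boundary analysis of Section 4 — passing to rotated coordinates adapted to $x=\eta\phi_L(y)$ and Taylor expanding $v_2$ and the remainder about the boundary point — not just the interior estimate $|v|\geq 4|y-\tfrac12||\sin(2\pi x/N)|-C\eta/N$, which is vacuous near $x=0$. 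You flag this as a "subtlety" but supply no argument; as written it is a gap. (A harmless slip: the two nodal curves do not run from bottom to top of $\Omega$; each joins a point on the left or right side to a point on the top or bottom, which still yields three components.)
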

\begin{Rem}
The theorem says that the nodal set of $v$ resembles that of one of the hyperbolas
\begin{align*}
\left(x-\tfrac{N}{2}\right)\left(y-\tfrac{1}{2}\right) = \pm\eta. 
\end{align*}
In the course of the proof of Theorem \ref{thm:nodal-gap-estimate}, we will see which of these two hyperbolas the nodal set resembles, and that the direction in ii) is determined by the sign at the center of the first Fourier mode $v_1(x)$.  In particular, we observe that the number of nodal domains decreases from four to three under perturbations satisfying Assumptions \ref{assum:simple} and \ref{assum:phi}.  We expect that similar behavior occurs near any interior crossing  under generic perturbations for higher oscillatory modes on the model rectangle.  This includes any Courant sharp eigenfunctions on the model rectangle.
\end{Rem}
Our second main theorem concerns the regularity of the nodal set of $v$.
\begin{theorem} \label{thm:regularity}
There exist constants $\eta_0>0$ and $N_0>1$, $C_0>0$, such that for all $0<\eta<\eta_0$, $N>N_0$, the nodal set of $v$ has the following properties:
\begin{enumerate}
    \item[i)] With respect to the direction given in Theorem \ref{thm:nodal-gap-estimate} ii), the nodal set of $v$ can be written as the graphs of two smooth functions with bounded derivatives.
    \item[ii)] Outside of a disc of radius $\sim \sqrt{\eta}$ centered at $(\tfrac{N}{2},\tfrac{1}{2})$, the nodal set can be parameterized as the graphs of functions $x = g(y)$, and $y = h(x)$, with 
    \begin{align*}
        |g'(y)| \leq \frac{C_0\eta}{|y-\tfrac{1}{2}|^2},
    \end{align*}
    and
    $$
    |h'(x)| \leq
    \begin{cases}\frac{C_0\eta}{N\left|\sin\left(\tfrac{2\pi x}{N}\right)\right| + 1} & \emph{ if } \left|x-\tfrac{N}{2}\right| \geq \tfrac{1}{10} \\
    \frac{C_0\eta}{\left|x-\tfrac{N}{2}\right|^2} & \emph{ if } \left|x-\tfrac{N}{2}\right| < \tfrac{1}{10}.
    \end{cases}
    $$
    \item[iii)] The nodal set intersects the boundary $\pa\Omega$ orthogonally at precisely $4$ points.
\end{enumerate}
\end{theorem}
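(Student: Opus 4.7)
The whole proof rests on the adiabatic decomposition $v = v_1(x)\sin(\pi y) + v_2(x)\sin(2\pi y) + E(x,y)$ from Proposition \ref{bounds-prop}, combined with the localization of the nodal set provided by Theorem \ref{thm:nodal-gap-estimate}. In each of the three parts, the strategy is to identify a partial derivative of $v$ whose magnitude dominates the $O(\eta/N)$ error terms along the nodal set, and then to apply the implicit function theorem to get a graph representation whose slope is controlled by a ratio of partials.

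For part i), I would rotate coordinates so that the $u$-axis lies along the strip direction supplied by Theorem \ref{thm:nodal-gap-estimate} ii) and the $w$-axis is transverse. Using $v_2(x) \approx -(2\pi/N)(x-\tfrac{N}{2})$ and $\sin(2\pi y) \approx -2\pi(y-\tfrac{1}{2})$ near the center, a direct computation gives $\partial_x v = (4\pi^2/N)(y-\tfrac{1}{2}) + O(\eta/N)$ and $\partial_y v = (4\pi^2/N)(x-\tfrac{N}{2}) + O(\eta/N)$, hence $\partial_w v \sim w/N$. Since Theorem \ref{thm:nodal-gap-estimate} ii) forces $|w|$ to be at least of order $\sqrt{\eta}$ on the nodal set, this dominates the $O(\eta/N)$ error, so $\partial_w v$ has constant sign on each connected component of the nodal set. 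The implicit function theorem then produces two global graphs $w = w_\pm(u)$; boundedness of $w_\pm'(u) = -\partial_u v/\partial_w v$ follows because numerator and denominator are of comparable size on the nodal set.

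For part ii), split by the size of $|x-\tfrac{N}{2}|$. In the large-$x$ regime $|x-\tfrac{N}{2}| \geq \tfrac{1}{10}$, use $v_2(x) \approx \sin(2\pi x/N)$ together with Theorem \ref{thm:nodal-gap-estimate} iv), which forces $|y-\tfrac{1}{2}|$ to be at most of order $\eta/(N|\sin(2\pi x/N)|+1)$ on the nodal set so that $\cos(2\pi y) \approx -1$ there; this gives a lower bound on $|\partial_y v|$ proportional to $|\sin(2\pi x/N)|+1/N$. Since $\partial_x v$ is dominated by $v_1'(x)\sin(\pi y) = O(\eta/N)$, dividing yields the stated bound on $h'(x)$. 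In the near-center regime, I reuse the Taylor expansions from part i): outside the disc of radius $\sqrt{\eta}$, either $|y-\tfrac{1}{2}|$ or $|x-\tfrac{N}{2}|$ is at least a constant times $\sqrt{\eta}$, so the corresponding partial derivative dominates the error. The slope estimate then reads $|g'(y)| \sim |x-\tfrac{N}{2}|/|y-\tfrac{1}{2}|$, and the hyperbola bound of Theorem \ref{thm:nodal-gap-estimate} iii), $|x-\tfrac{N}{2}||y-\tfrac{1}{2}| \leq A_0\eta$, upgrades this to $|g'(y)| \leq C_0\eta/|y-\tfrac{1}{2}|^2$, with a symmetric estimate for $h'(x)$.

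For part iii), a boundary intersection is a point where $v$ vanishes together with the normal derivative $\partial_\nu v$. On the straight side $y = 0$ this reduces to solving $\partial_y v(x,0) = \pi v_1(x) + 2\pi v_2(x) + \partial_y E(x,0) = 0$. The leading part $2\pi v_2(x) \approx 2\pi\sin(2\pi x/N)$ has a non-degenerate zero at $x = \tfrac{N}{2}$ with derivative $-4\pi^2/N$, and the remaining terms are $O(\eta/N)$, so the implicit function theorem supplies a unique interior zero near $x = \tfrac{N}{2}$. The same argument handles $y = 1$, $x = N$, and the curved left side after a local straightening. Orthogonality then follows by factoring $v(x,y) = (y-y_0)u(x,y)$ near a boundary zero $(x_0,y_0)$ on $y = y_0$: the eigenvalue equation $\Delta v + \mu v = 0$ forces $u_y(x_0,y_0) = 0$, so the interior nodal tangent $g'(y_0) = -u_y/u_x = 0$ is vertical. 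The main obstacle is excluding spurious zeros of $\partial_\nu v$ near the corners, where $v_2$ also vanishes; here one must use the corner compatibility relations such as $\partial_y v(0,0) = 0$ (forced by $v \equiv 0$ on both adjacent sides) together with a quantitative lower bound on the leading term outside small corner neighborhoods, so that corner-induced zeros of $\partial_y v(\cdot,0)$ are trapped at the corner itself and do not produce additional interior nodal termini.
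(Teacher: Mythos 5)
Your overall strategy --- identify, region by region, the partial derivative of $v$ that dominates the $O(\eta/N)$ errors along the nodal set, then run the implicit function theorem and bound the slope by a ratio of partials --- is exactly the paper's strategy, and your treatment of the center (part i)) and of the horizontal branches in the regime $|x-\tfrac{N}{2}|\geq \tfrac1{10}$ matches the paper's Sections 2 and 3. However, there is a genuine gap in part ii) on the vertical branches near the horizontal sides $y=0$ and $y=1$, where $\sin(2\pi y)$ degenerates. There the location estimate gives $|x_0-\tfrac N2|\leq C\eta$, so $|v_2(x_0)|\leq C\eta/N$ and the naive bounds read $|\pa_x v|\geq c|\sin(2\pi y_0)|/N - C\eta/N$ and $|\pa_y v|\leq C\eta/N$. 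For $|y_0|\lesssim \eta$ the lower bound on $\pa_x v$ is vacuous (you cannot even conclude the nodal set is a graph $x=g(y)$), and for $\eta\ll |y_0|\ll 1$ the quotient only gives $|g'|\leq C\eta/|y_0|$, which blows up instead of yielding the claimed $|g'(y)|\leq C_0\eta/|y-\tfrac12|^2\approx C\eta$. Your linearization $\pa_y v\approx (4\pi^2/N)(x-\tfrac N2)$ is not valid there either, since the error $\pa_y B=O(\eta/N)$ is of the same size as that main term. The paper closes this by exploiting $B(x,0)\equiv 0$: Taylor expanding $B$ and $\pa_y B$ about $y=0$, and substituting $v_2(x_0)=-B(x_0,y_0)/\sin(2\pi y_0)$, one extracts a factor of $|y_0|$ from both $\pa_x B$ (giving $|\pa_x v|\geq c|y_0|/N$) and from $-2\pi\cot(2\pi y_0)B+\pa_y B$ (giving $|\pa_y v|\leq C|y_0|\eta/N$, and, with one more order of expansion using $\pa_y^2B(x,0)=0$ from the equation, $\leq C|y_0|^2\eta/N$ for the orthogonality statement). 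Your factoring trick $v=(y-y_0)u$ in part iii) is the same idea in embryonic form, but you only invoke it at the exact boundary point; it must be deployed quantitatively throughout the strip $0<y<\tfrac14$ to prove part ii) and to show the vertical branches actually reach the boundary as graphs.

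A second, smaller gap is the corner analysis in part iii), which you flag but do not resolve. The paper's argument here is clean and worth knowing: if the nodal set entered the region $\{x<1,\ y<\tfrac14\}$, it would bound a nodal domain contained in the strip $0\leq y\leq\tfrac14$, whose first Dirichlet eigenvalue is at least $16\pi^2>\mu$ by domain monotonicity --- a contradiction. This replaces your "corner compatibility relations" sketch. Finally, for the curved left side, "local straightening" hides real work: the paper rotates coordinates about the contact point so that the normal is horizontal, re-expands $v$ with a shifted second mode $\tilde w_2$ vanishing at the boundary, and relies on a cancellation of the $\tilde w_2''$ terms (via the eigenfunction equation and $\phi_L'(y_0)=0$ after rotation) to get $|\pa_x v|\leq C\eta N^{-1}|x_0-x_l|^2$; this is where the $C^5$ regularity of $\phi_L$ is consumed.
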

\begin{Rem} \label{Nrem}
In Theorems \ref{thm:nodal-gap-estimate} and \ref{thm:regularity}, taking the length scale $N$ to be sufficiently large is only used to guarantee the structure of the opening of the nodal set near the center. Combining \eqref{eqn:phi1} with Proposition \ref{bounds-prop} \eqref{v1-prop} ensures that $|v_1(\tfrac{N}{2})|\sim \eta/N$. We only require $N$ to be large so that the exponentially small bound of  $C\eta e^{-cN}$ on $|E(\tfrac{N}{2},\tfrac{1}{2})|$ from Proposition \ref{bounds-prop} \eqref{E-prop} is small compared to $\eta/N$, as this then implies that $|v(\tfrac{N}{2},\tfrac{1}{2})| = |v_1(\tfrac{N}{2}) + E(\tfrac{N}{2},\tfrac{1}{2})|$ is also comparable to $\eta/N$. It is this estimate $|v(\tfrac{N}{2},\tfrac{1}{2})|\sim \eta/N$ that leads to the opening of the nodal set of size $\sqrt{\eta}$ at the center. For any fixed $N>1$, our proof will provide an upper bound of size $\sqrt{\eta}$ on the opening at the center. Moreover, if Assumptions \ref{assum:simple} and \ref{assum:phi} hold, together with the estimate $|v(\tfrac{N}{2},\tfrac{1}{2})|\sim \eta/N$, then all of the results of Theorem \ref{thm:nodal-gap-estimate} and \ref{thm:regularity} hold for any fixed $N>1$ and all $\eta>0$ sufficiently small. In particular, in this case the eigenfunction $v$ has three nodal domains for all $\eta>0$.
\end{Rem}

 The structure of the rest of the paper is as follows. In Section \ref{sec:center}, we study the nodal set of $v$ in a disc of radius comparable to $\eta^{2/5}$, 
 centered at $(\tfrac{N}{2},\tfrac{1}{2})$. In particular, by proving a sufficiently sharp approximation of the nodal set by a hyperbola, we establish the opening of the crossing for $\eta>0$. In Section \ref{sec:edge}, we prove that away from this disc centered at $(\tfrac{N}{2},\tfrac{1}{2})$, the nodal set of $v$ is given by $4$ almost \textit{flat} curves, and control the deviation of these curves from straight lines as they move towards the boundary of $\Omega$. Then, in Section \ref{sec:perpendicular}, we will prove estimates on the perpendicularity of the nodal set of $v$ at the $4$ points where it meets $\pa\Omega$. In Section \ref{sec:ansatz}, we establish Propositions \ref{bounds-prop} and \ref{hadamard-prop}.  Finally, in Section \ref{sec:numerics} we show some numerics using Matlab to illustrate our results and the role of Assumptions \ref{assum:simple} and \ref{assum:phi}.

\subsection*{Acknowledgements} 

The authors are grateful to Yaiza Canzani and Stefan Steinerberger for helpful conversations about related problems.  T.B. was supported through NSF Grant DMS-2042654.  J.L.M.
acknowledges support from the NSF through NSF grant DMS-1909035. M.G. received support through the UNC SURF program for Undergraduate Research.

\section{Properties of the Nodal Set Opening} \label{sec:center}

In this section, we will study the nodal set of the eigenfunction $v$ in a disc centered at $(\tfrac{N}{2},\tfrac{1}{2})$. By choosing the radius of this disc appropriately, we will show that inside this disc the nodal set of $v$ is sufficiently close to that of a hyperbola and consists of two smooth curves separated by a distance comparable to $\sqrt{\eta}$. In order for $v$ to be closely approximated by this model hyperbola, in this section we will work in the disc $D_{\eta^{2/5}}(\tfrac{N}{2},\tfrac{1}{2})$, centered at $(\tfrac{N}{2},\tfrac{1}{2})$ and of radius $\eta^{2/5}$. The hyperbola that the eigenfunction $v$ is closely approximated by is the following.
\begin{Def} \label{defn:hyperbola}
With $v_1(x)$, $v_2(x)$ as in \eqref{mode-inner-product} and $\mu_1^2 = \mu-\pi^2$, we define the quadratic polynomial $P(x,y)$ by
\begin{align*}
    P(x,y) = v(\tfrac{N}{2},\tfrac{1}{2}) + \nabla v(\tfrac{N}{2},\tfrac{1}{2}) \left(x-\tfrac{N}{2},y-\tfrac{1}{2}\right)^{T} + \tfrac{1}{2}\left(x-\tfrac{N}{2},y-\tfrac{1}{2}\right)\mathbf{H}\left(\tfrac{N}{2}, \tfrac{1}{2}\right) \left(x - \tfrac{N}{2}, y - \tfrac{1}{2}\right)^T.
\end{align*}
Here $\mathbf{H}\left(\tfrac{N}{2}, \tfrac{1}{2}\right)$ is the Hessian matrix
\begin{align*}
    \mathbf{H}\left(\tfrac{N}{2}, \tfrac{1}{2}\right) = \begin{pmatrix}
-\mu_1^2 v_1\left(\tfrac{N}{2}\right) & -2\pi v'_2\left(\tfrac{N}{2}\right)\\
-2\pi v'_2\left(\tfrac{N}{2}\right) & -\pi^2 v_1\left(\tfrac{N}{2}\right)
\end{pmatrix}.
\end{align*}
\end{Def}
To determine the behavior of the nodal set of $v$ in $D_{\eta^{2/5}}(\tfrac{N}{2},\tfrac{1}{2})$, we will first study the nodal set of $P(x,y)$, and then use the following proposition to translate these properties of the nodal set of $v$.
\begin{prop} \label{prop:hyperbola-approx}
There exist constants $\eta_0>0$ and $C_1$ such that for all $N>1$ and $0<\eta<\eta_0$, the remainder $R(x,y) = v(x,y) - P(x,y)$ satisfies
\begin{align*}
    \sup_{(x,y)\in D_{2\eta^{2/5}}\left(\tfrac{N}{2},\tfrac{1}{2}\right)}|R(x,y)| + \eta^{2/5}|\nabla R(x,y)| \leq C_1 \eta^{8/5}/N.
\end{align*}
\end{prop}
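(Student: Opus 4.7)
The plan is to decompose $R = v - P$ into contributions from the main Fourier modes and the error $E$, treat it as a Taylor-type remainder, and exploit the structure of $v_1, v_2$ together with trigonometric cancellations at the center $(\tfrac{N}{2},\tfrac{1}{2})$.

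First, I would set $w(x,y) := v_1(x)\sin(\pi y) + v_2(x)\sin(2\pi y)$ so that $v = w + E$. Using $\sin(\tfrac{\pi}{2})=1$, $\sin(\pi)=0$, $\cos(\tfrac{\pi}{2})=0$, $\cos(\pi)=-1$, together with the ODE $v_1'' = -\mu_1^2 v_1$, a direct computation identifies the Hessian matrix appearing in Definition \ref{defn:hyperbola} with $(\nabla^2 w)(\tfrac{N}{2},\tfrac{1}{2})$. Since $P$ uses the true value and gradient of $v$ at the center but only the Hessian of $w$, this gives
\[
R = (w - P_w) + R_E, \qquad R_E := E - E(\tfrac{N}{2},\tfrac{1}{2}) - \nabla E(\tfrac{N}{2},\tfrac{1}{2})\cdot(x-\tfrac{N}{2},y-\tfrac{1}{2})^{T},
\]
where $P_w$ is the second order Taylor polynomial of $w$ at $(\tfrac{N}{2},\tfrac{1}{2})$ and $R_E$ is the first order Taylor remainder of $E$. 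It then suffices to estimate each piece separately.

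Second, I would bound $\sup_D|\nabla^3 w|$ on $D := D_{2\eta^{2/5}}(\tfrac{N}{2},\tfrac{1}{2})$ by extracting a factor of $\eta^{2/5}$ from each third derivative. The key inputs from Proposition \ref{bounds-prop} are that $|v_1^{(j)}| \leq C\eta/N$ for $0\leq j\leq 3$, while $v_2(x)$ and $v_2''(x)$ are controlled by $\sin(\tfrac{2\pi x}{N})$ up to errors of order $\eta/N^{j+1}$. Since $\sin(\tfrac{2\pi x}{N})$ vanishes at $x=\tfrac{N}{2}$, this yields $|v_2|\leq C\eta^{2/5}/N$ and $|v_2''|\leq C\eta^{2/5}/N^3$ on $D$, and similarly $|\sin(2\pi y)|, |\cos(\pi y)|\leq C\eta^{2/5}$ on $D$ from the vanishing at $y=\tfrac{1}{2}$. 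Substituting these into each of $w_{xxx}, w_{xxy}, w_{xyy}, w_{yyy}$ (for instance, $w_{yyy} = -\pi^3 v_1\cos(\pi y) - 8\pi^3 v_2\cos(2\pi y)$ is controlled by $\tfrac{\eta}{N}\cdot\eta^{2/5} + \tfrac{\eta^{2/5}}{N}$) yields $\sup_D|\nabla^3 w| \leq C\eta^{2/5}/N$. Taylor's theorem with integral remainder on $D_r$, $r = 2\eta^{2/5}$, then gives
\[
|w-P_w| \leq \tfrac{r^3}{6}\sup_D|\nabla^3 w| \leq C\eta^{8/5}/N, \qquad |\nabla(w-P_w)| \leq \tfrac{r^2}{2}\sup_D|\nabla^3 w| \leq C\eta^{6/5}/N,
\]
so that $\eta^{2/5}|\nabla(w-P_w)| \leq C\eta^{8/5}/N$ as required.

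Third, for the $E$ remainder, for $\eta$ small the disc $D$ lies in $[\tfrac{N}{4},\tfrac{3N}{4}]\times[\tfrac{1}{10},\tfrac{9}{10}]$, so Proposition \ref{bounds-prop}(\ref{E-prop}) gives $|\nabla^j E| \leq C\eta e^{-cN}$ on $D$ for $0\leq j\leq 3$. Taylor's first order remainder then gives $|R_E| \leq \tfrac{r^2}{2}\sup_D|\nabla^2 E| \leq C\eta^{9/5}e^{-cN}$ and $|\nabla R_E| \leq r\sup_D|\nabla^2 E| \leq C\eta^{7/5}e^{-cN}$, and absorbing $Ne^{-cN}$ into a universal constant for $N$ sufficiently large yields the target bound. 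The main obstacle is the careful bookkeeping in the second step: showing $\sup_D|\nabla^3 w| = O(\eta^{2/5}/N)$ instead of the naive $O(1)$ requires identifying, for every third-derivative term, either a trigonometric factor that vanishes at the center or a $v_2$-derivative that does, and collecting from the radius of $D$ the appropriate power of $\eta^{2/5}$ to upgrade the Taylor bound from order $\eta^{6/5}$ to order $\eta^{8/5}/N$.
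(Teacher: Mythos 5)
Your proposal is correct and follows essentially the same route as the paper: Taylor expansion at the center, identification of the model Hessian with the Hessian of $v_1(x)\sin(\pi y)+v_2(x)\sin(2\pi y)$, third-derivative bounds of size $\eta^{2/5}/N$ obtained from the vanishing of $\sin(2\pi y)$ at $y=\tfrac12$ and of $v_2, v_2''$ near $x=\tfrac{N}{2}$, and the estimates on $E$ from Proposition \ref{bounds-prop}. The only cosmetic difference is that you split off $E$ at the outset and Taylor-expand $w$ and $E$ separately, whereas the paper expands $v$ directly and then swaps $\mathbf{H}_2$ for $\mathbf{H}$; the two bookkeeping schemes are equivalent.
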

The nodal set of $P(x,y)$ has the following properties.
\begin{prop} \label{prop:hyperbola}
There exist constants $\eta_0>0$, $N_0>1$ and $C_2$ such that for all $0<\eta<\eta_0$, $N>N_0$, the nodal set of $P(x,y)$ is a hyperbola satisfying the following properties:
\begin{enumerate}

\item[i)] The center of the hyperbola is at the point $(x_c,y_c)$, with $\left|(x_c,y_c)-(\tfrac{N}{2},\tfrac{1}{2})\right|\leq C_2\eta.$ Moreover, $P(x,y)$ can be written as
\begin{align*}
  P_0 -\tfrac{1}{2}\mu_1^2 v_1\left(\tfrac{N}{2}\right)(x - x_c)^2 -2\pi v'_2\left(\tfrac{N}{2}\right) (x-x_c)(y-y_c) - \tfrac{1}{2}\pi^2 v_1\left(\tfrac{N}{2}\right) (y - y_c)^2,
\end{align*}
for a constant $P_0$ satisfying $|P_0|\sim \eta/N$.

\item[ii)] The distance between the vertices of the hyperbola is comparable to $\sqrt{\eta}$.

\item[iii)] The principal axis of the hyperbola (passing through its vertices) makes an angle $\varphi$ with the positive $x$-axis with either $|\varphi - \tfrac{\pi}{4}| < C_2\eta$ or $|\varphi + \tfrac{\pi}{4}|<C_2\eta$. This orientation will be determined by the sign of $v_2'(\tfrac{N}{2})/v_1(\tfrac{N}{2})$, with $v_1(x)$, $v_2(x)$ from \eqref{ansatz} and \eqref{mode-inner-product}. That is, $\varphi \approx \tfrac{\pi}{4}$ for $v_2'(\tfrac{N}{2})/v_1(\tfrac{N}{2})>0$ and $\varphi\approx-\tfrac{\pi}{4}$ for $v_2'(\tfrac{N}{2})/v_1(\tfrac{N}{2})<0$. 

\end{enumerate}
\end{prop}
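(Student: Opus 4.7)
The overall strategy is to treat $P$ as a perturbation of a quadratic form whose Hessian has small diagonal entries and order-$1/N$ off-diagonal entries. From Proposition \ref{bounds-prop} and the discussion in Remark \ref{Nrem}, I extract the key size estimates $|v_1(\tfrac{N}{2})| \sim \eta/N$, $|v_2(\tfrac{N}{2})| \leq C\eta/N$, and $v_2'(\tfrac{N}{2}) = -\tfrac{2\pi}{N} + O(\eta/N)$. Thus in the Hessian $\mathbf{H}$ the diagonal entries $a = -\mu_1^2 v_1(\tfrac{N}{2})$ and $c = -\pi^2 v_1(\tfrac{N}{2})$ are of size $\sim \eta/N$, while the off-diagonal entry $b = -2\pi v_2'(\tfrac{N}{2})$ is of size $\sim 1/N$. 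Consequently $\det(\mathbf{H}) = \pi^2\mu_1^2 v_1(\tfrac{N}{2})^2 - 4\pi^2 v_2'(\tfrac{N}{2})^2 \sim -1/N^2$, so $P$ is genuinely hyperbolic for all $\eta$ sufficiently small.

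For part (i), the center is the unique solution of $\nabla P = 0$, namely $(x_c - \tfrac{N}{2}, y_c - \tfrac{1}{2})^T = -\mathbf{H}^{-1}\nabla v(\tfrac{N}{2}, \tfrac{1}{2})^T$. Using the Fourier decomposition \eqref{ansatz}, the evaluations $\sin(\tfrac{\pi}{2}) = 1$, $\sin(\pi) = 0$, $\cos(\tfrac{\pi}{2}) = 0$, $\cos(\pi) = -1$ give $\partial_x v(\tfrac{N}{2}, \tfrac{1}{2}) = v_1'(\tfrac{N}{2}) + \partial_x E$ and $\partial_y v(\tfrac{N}{2}, \tfrac{1}{2}) = -2\pi v_2(\tfrac{N}{2}) + \partial_y E$, each bounded by $C\eta/N$ via Proposition \ref{bounds-prop}. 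Since the operator norm of $\mathbf{H}^{-1}$ is of order $N$ (controlled by the off-diagonal entry through $1/\det(\mathbf{H})\cdot b$), the displacement from $(\tfrac{N}{2}, \tfrac{1}{2})$ is at most $C\eta$. Rewriting $P$ about the center produces exactly the stated expression, and the constant term satisfies
\[
P_0 = v(\tfrac{N}{2}, \tfrac{1}{2}) + \tfrac{1}{2}\nabla v(\tfrac{N}{2}, \tfrac{1}{2})\cdot(x_c - \tfrac{N}{2}, y_c - \tfrac{1}{2})^T.
\]
The first term equals $v_1(\tfrac{N}{2}) + E(\tfrac{N}{2}, \tfrac{1}{2})$, which is of size $\sim \eta/N$ by \eqref{eqn:phi1}, Proposition \ref{bounds-prop}\eqref{v1-prop}, and the exponentially small bound on $E$ at the interior point. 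The second term is $O(\eta^2/N)$, so $|P_0| \sim \eta/N$.

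For parts (ii) and (iii), I orthogonally diagonalize $\mathbf{H}$: the eigenvalues are $\lambda_\pm = \tfrac{1}{2}(a+c) \pm \tfrac{1}{2}\sqrt{(a-c)^2 + 4b^2}$, and the smallness $|a|, |c| \leq C\eta|b|$ gives $\lambda_\pm = \pm |b| + O(\eta/N)$ with opposite signs. In the principal-axis coordinates $(u,w)$ the nodal set becomes $\lambda_+ u^2 + \lambda_- w^2 = -2P_0$. The vertices lie on the axis associated to the eigenvalue $\lambda^\ast$ whose sign is opposite to that of $P_0$, at distance $2\sqrt{2|P_0/\lambda^\ast|} \sim 2\sqrt{(\eta/N)/(1/N)} = 2\sqrt{\eta}$, proving (ii). For the angle, the slope of the $\lambda^\ast$-eigenvector is $(\lambda^\ast - a)/b$, which to leading order equals $\operatorname{sign}(\lambda^\ast)/\operatorname{sign}(b)$ with an $O(\eta)$ correction. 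A case analysis on the four sign combinations of $v_1(\tfrac{N}{2})$ and $v_2'(\tfrac{N}{2})$ (recalling $\operatorname{sign}(P_0) = \operatorname{sign}(v_1(\tfrac{N}{2}))$ and $\operatorname{sign}(b) = -\operatorname{sign}(v_2'(\tfrac{N}{2}))$) shows the slope is $+1$, hence $\varphi \approx \tfrac{\pi}{4}$, precisely when $v_2'(\tfrac{N}{2})/v_1(\tfrac{N}{2}) > 0$, and $-1$ otherwise; the $O(\eta)$ correction yields the bound $|\varphi \mp \tfrac{\pi}{4}| \leq C_2\eta$.

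The main obstacle is the sign bookkeeping in part (iii): the principal axis is determined by which eigenvalue has sign opposite to $P_0$, and this assignment flips as $\operatorname{sign}(v_1(\tfrac{N}{2}))$ changes; simultaneously, the eigenvector directions flip with $\operatorname{sign}(b) = -\operatorname{sign}(v_2'(\tfrac{N}{2}))$. One must verify that these two flips combine to give the desired criterion depending only on the sign of the ratio $v_2'(\tfrac{N}{2})/v_1(\tfrac{N}{2})$. Everything else reduces to the straightforward perturbation of eigenvalues and eigenvectors of a $2\times 2$ matrix with a small symmetric diagonal part.
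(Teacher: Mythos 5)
Your argument is correct and follows essentially the same route as the paper: both center the quadratic, identify the constant term as $v(\tfrac{N}{2},\tfrac{1}{2})$ up to an $O(\eta^2/N)$ correction, use $\det\mathbf{H}\sim -N^{-2}$ (small diagonal, order-$N^{-1}$ off-diagonal) to get hyperbolicity and the $C\eta$ bound on the center, and read off the $\sqrt{\eta}$ vertex separation and the $\pm\tfrac{\pi}{4}$ angle from the principal-axis form. The only cosmetic difference is that you package the rotation via explicit eigenvalues and eigenvector slopes of $\mathbf{H}$, whereas the paper uses the classical conic formulas ($D/\det\mathbf{H}$ and $\tan(2\varphi)=2\gamma/(\alpha-\beta)$); your sign bookkeeping in (iii), selecting the transverse axis as the eigendirection whose eigenvalue is opposite in sign to $P_0$, is if anything slightly more explicit than the paper's.
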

Before proving these two propositions, let us first use them to establish the parts of Theorems  \ref{thm:nodal-gap-estimate} and \ref{thm:regularity} in the disc $D_{\eta^{2/5}}(\tfrac{N}{2},\tfrac{1}{2})$. By the bounds on $v_1(x)$ and $v_2(x)$ given in Proposition \ref{bounds-prop} \eqref{v1-prop} and \eqref{v2-prop}, $|v_1(\tfrac{N}{2})|\leq C\eta/N$ and $|v_2'(\tfrac{N}{2})|\sim 1/N$. Therefore, by introducing new coordinates $(x',y')$, centered at $(x_c,y_c)$ and rotated by an angle $\varphi$, and using Proposition \ref{prop:hyperbola} i), we can write $v$ as
\begin{align} \label{eqn:hyperbola1}
  v(x',y') = P_0 - \frac{(x')^2}{A^2} + \frac{(y')^2}{B^2} + R'(x',y'),
\end{align}
with $|A|, |B|\sim \sqrt{N}$, and $|R'(x',y')| + \eta^{2/5}|\nabla R'(x',y')| \leq C\eta^{8/5}/N$ for $|(x',y')| \leq 2\eta^{2/5}$.

To prove the parts of Theorem \ref{thm:nodal-gap-estimate} (i), (ii) and Theorem \ref{thm:regularity} (i) inside the disc $D_{\eta^{2/5}}(\tfrac{N}{2},\tfrac{1}{2})$ near the center, we first exclude the remainder term from \eqref{eqn:hyperbola1} to obtain a preliminary estimate on the nodal set separation and regularity, and then correct for the remainder. For ease of notation we drop the primes in $(x',y')$ and so work with $(x,y)$ in a disc of radius $2\eta^{2/5}$ centered at the origin. Let $(x, y)$ be a solution of
\begin{equation}  \label{eqn:model-nodal-set}
    \frac{x^2}{A^2} - \frac{y^2}{B^2} = P_0,
\end{equation}
with $|(x,y)| \leq \eta^{2/5},$ where we assume without loss of generality that $P_0 > 0$. This is the equation of a hyperbola, and we write the right branch of it as $x := x(y) >0,$ giving
\begin{equation} \label{eqn:model-hyperbola-graph-over-y}
    x(y) = \frac{A}{B} \sqrt{B^2 P_0 + y^2}.
\end{equation}
Let $(\tilde{x}, y)$ denote a solution to
\begin{equation}  \label{eqn:true-nodal-set}
    \frac{\tilde{x}^2}{A^2} - \frac{y^2}{B^2} = P_0 + R'(\tilde{x},y), 
\end{equation}
so that by \eqref{eqn:hyperbola1}, $(\tilde{x},y)$ is on the nodal set of $v$.

We will look for a solution of \eqref{eqn:true-nodal-set} of the form $\tilde{x}(y) = x(y) + w(y)$ for some perturbation $w = w(y)$. The model hyperbola equation \eqref{eqn:model-nodal-set} implies that $|x(y)| \geq c\sqrt{\eta}.$ To obtain bounds on the opening distance for the true nodal set \eqref{eqn:true-nodal-set}, we will prove bounds on the size and regularity of the perturbation $w$. We first prove that such a solution of \eqref{eqn:true-nodal-set} exists for each fixed $y$ with $|(x(y),y)| \leq \eta^{2/5}$, and then obtain crude bounds on $w$ ensuring that $|(\tilde{x},y)| \leq 2\eta^{2/5}$. We then use the resulting bounds on $R'(\tilde{x},y)$ for $(\tilde{x},y)$ in this disc to obtain tighter bounds. The left branch of the hyperbola can be treated in an identical way.

Substituting $\tilde{x}(y) := x(y) + w(y)$ into \eqref{eqn:true-nodal-set} and using \eqref{eqn:model-nodal-set} gives
\begin{align*}
    \frac{w^2}{A^2} + \frac{2xw}{A^2} = R'(x+w, y).
\end{align*}
Fix $y_0$ with $|(x(y_0), y_0)| \leq \eta^{2/5},$ and set
\begin{align*}
    f(y_0,w) = f_{y_0}(w) = \frac{w^2}{A^2} + \frac{2xw}{A^2} - R'(x+w, y_0).
\end{align*}
The function $f_{y_0}(w)$ is then equal to zero precisely when $(\tilde{x},y_0)$ satisfies \eqref{eqn:true-nodal-set}. Observe then that, since $x=x(y_0) > 0$ 
\begin{align*}
    f_{y_0}(\eta^{2/5}) & = \frac{\eta^{4/5}}{A^2} + \frac{2x \eta^{2/5}}{A^2} - R'(x(y_0)+\eta^{2/5}, y_0) \\
     & \geq  \frac{\eta^{4/5}}{A^2}  - \sup_{|(x,y)|\leq \eta^{2/5}}|R'(x+\eta^{2/5}, y)|
     \geq \frac{\eta^{4/5}}{A^2} - C\frac{\eta^{8/5}}{N} > 0,
\end{align*}
for $\eta$ sufficiently small. Here we have applied the estimates on $R'$ and used $|A| \sim \sqrt{N}$ from \eqref{eqn:hyperbola1}. Similarly, using $x = x(y_0) \geq c\sqrt{\eta}$, observe that
\begin{align*}
    f_{y_0}(-\tfrac{c}{2}\sqrt{\eta}) &= \frac{c^2 \eta}{4A^2} - \frac{cx\sqrt{\eta}}{A^2} + R'(x - \tfrac{c}{2}\sqrt{\eta}, y_0) 
    \leq \frac{c^2 \eta}{4A^2} - \frac{c^2\eta}{A^2} + C\frac{\eta^{8/5}}{N} < 0
\end{align*}
for $\eta$ sufficiently small. Therefore, for each $y_0$ there exists some $w \in (-\tfrac{c}{2}\sqrt{\eta},\eta^{2/5})$ satisfying $f_{y_0}(w) = 0,$ or equivalently satisfying \eqref{eqn:true-nodal-set}, where the constant $c$ is independent of $y_0$.  In particular, the lower bound on $w$ implies that $\tilde{x} \geq \tfrac{c}{2}\sqrt{\eta} > 0$, and the upper bound on $w$ implies that $|(\tilde{x}, y)| \leq 2\eta^{2/5}.$

To refine the bound on $w = w(y)$, subtract \eqref{eqn:model-nodal-set} from \eqref{eqn:true-nodal-set} to obtain
\begin{align*}
    \tilde{x}^2 - x^2 = w(\tilde{x} + x) = A^2 R'(\tilde{x},y).
\end{align*}
Since $\tilde{x}$ and $x$ are both positive, and $|(\tilde{x},y)| \leq 2\eta^{2/5}$, we have
\begin{align*}
     |w| = \left|\frac{A^2 R'(\tilde{x},y)}{\tilde{x} + x}\right| \leq C\frac{\eta^{8/5}}{\sqrt{\eta}} \leq C \eta^{11/10}.
\end{align*}

To study the impact of the error term $R'$ on the regularity of the nodal set, pick $y_0, w_0$ such that $(\tilde{x}(y_0), y_0)$ lies in the nodal set. Then $f(y_0, w_0) = 0$. Further, using \eqref{eqn:model-hyperbola-graph-over-y}, we have that
\begin{align*}
|\partial_w f(y_0, w_0)| &= \left|\frac{2w_0}{A^2} + \frac{2}{AB}\sqrt{P_0 B^2+y_0^2} - \partial_x R'(\tilde{x}(y_0), y_0) \right|\\
&\geq \left|\frac{2}{AB}\sqrt{P_0 B^2+y_0^2} \right| - \left|\frac{2w_0}{A^2} \right| - \left| \partial_x R'(\tilde{x}(y_0), y_0) \right|\\
&\geq \frac{2}{A}\left(\left|\sqrt{P_0}\right| - \left|\frac{w_0}{A}\right|\right) - \left| \partial_x R'(\tilde{x}(y_0), y_0) \right|.
\end{align*}
Combining the estimates on $A$, $P_0$ and $R'$ given after \eqref{eqn:hyperbola1}, with the bound $|w_0| = |w(y_0)| \leq C\eta^{11/10}$ therefore gives that $|\partial_w f(y_0, w_0)| > c\eta^{1/2}/N$ for $\eta>0$ sufficiently small. 
This gives the existence of a graph function $w(y)$ and a neighborhood $U$ of $(y_0, w_0)$ such that $f^{-1}(0)\cap U = \{(y, w(y))\,:\, y \in U\}$, with
\begin{align*}
|w'(y)| = \frac{|\partial_y f(y,w(y))|}{|\partial_w f(y,w(y)|}.
\end{align*}
We also have the upper bound
$$|\partial_y f(y,w(y))| = \left| \frac{2yw}{AB\sqrt{P_0 B^2 + y^2}} - \partial_y R'(\tilde{x}(y),y)\right|  \leq C\eta^{11/10}/N$$
for $\eta$ sufficiently small, again using the estimates $|AB| \sim N$, $y/\sqrt{P_0B^2+y^2}\leq 1$, and bounds on $|\nabla R'|$. This combined with the lower bound on $|\partial_w f(y_0, w_0)|$ implies that $|w'(y)| \leq C\eta^{6/5}$.  

The parts of Theorem \ref{thm:nodal-gap-estimate} i) and ii) in the disc $D_{\eta^{2/5}}(\tfrac{N}{2},\tfrac{1}{2})$ then follow immediately from combining the separation of the model hyperbola in \eqref{eqn:model-nodal-set} with the estimate $|w(y)| \leq C\eta^{11/10}$. From the hyperbola model, the regularity of the model nodal set $x = x(y)$ satisfies 
\begin{align*}
  |x'(y)| = \left|Ay/\left(B\sqrt{B^2 P_0 + y^2)}\right)\right| \leq C|y|/\sqrt{\eta+y^2}\leq C.  
\end{align*}
Combining this with the above estimate $|w'(y)| \leq C\eta^{6/5}$, this part of the nodal set of $v$ is the graph of a smooth function with a bounded derivative, giving the part of Theorem \ref{thm:regularity} in $D_{\eta^{2/5}}(\tfrac{N}{2},\tfrac{1}{2})$. 
\\
\\
We are left in this section to prove Propositions \ref{prop:hyperbola-approx} and \ref{prop:hyperbola}.
\begin{proof1}{Proposition \ref{prop:hyperbola-approx}}
We start by Taylor expanding $v(x,y)$ about the center $(\tfrac{N}{2},\tfrac{1}{2})$ to get
\begin{align}
v(x,y) &= v\left(\tfrac{N}{2}, \tfrac{1}{2}\right) + \nabla v\left(\tfrac{N}{2}, \tfrac{1}{2}\right) \left(x - \tfrac{N}{2}, y - \tfrac{1}{2}\right)^T \notag \\
&  \hspace{.5cm} + \tfrac{1}{2} \left(x -\tfrac{N}{2}, y - \tfrac{1}{2}\right) \mathbf{H}_2\left(\tfrac{N}{2}, \tfrac{1}{2}\right) \left(x - \tfrac{N}{2}, y - \tfrac{1}{2}\right)^T + R_3(x,y).  \label{taylor-expansion}
\end{align}
Using the decomposition of $v(x,y)$ given in \eqref{ansatz}, we can write 
\begin{equation} \label{gradient-center}
\nabla v\left(\tfrac{N}{2}, \tfrac{1}{2}\right) = \displaystyle \begin{pmatrix}
v_1'\left(\tfrac{N}{2}\right)  \\ -2\pi v_2\left(\tfrac{N}{2}\right) 
\end{pmatrix} + \begin{pmatrix}
 \partial_x E \\  \partial_y E
\end{pmatrix}
\end{equation}
and
\begin{equation} \label{hessian-center}
\mathbf{H}_2\Big(\tfrac{N}{2}, \tfrac{1}{2}\Big) = \begin{pmatrix}
\pa_x^2v(\tfrac{N}{2},\tfrac{1}{2}) & \pa_x\pa_{y}v(\tfrac{N}{2},\tfrac{1}{2})\\
\pa_{x}\pa_{y}v(\tfrac{N}{2},\tfrac{1}{2}) & \pa_y^2v(\tfrac{N}{2},\tfrac{1}{2})
\end{pmatrix} = \begin{pmatrix}
-\mu_1^2 v_1\left(\tfrac{N}{2}\right) & -2\pi v'_2\left(\tfrac{N}{2}\right)\\
-2\pi v'_2\left(\tfrac{N}{2}\right) & -\pi^2 v_1\left(\tfrac{N}{2}\right)
\end{pmatrix} + \begin{pmatrix}
\partial_{xx}E & \partial_{xy}E\\
\partial_{xy}E & \partial_{yy}E
\end{pmatrix}.
\end{equation}
Here all the partial derivatives of $E$ are evaluated at $\left(\tfrac{N}{2}, \tfrac{1}{2}\right)$. In particular, using the bounds on $E(x,y)$ from Proposition \ref{bounds-prop} \eqref{E-prop}, we can replace $\mathbf{H}_2(\tfrac{N}{2},\tfrac{1}{2})$ by the matrix $\mathbf{H}(\tfrac{N}{2},\tfrac{1}{2})$ appearing in Definition \ref{defn:hyperbola}, up to an error that is sufficiently small to be included in the remainder $R(x,y)$. Therefore, to finish the proof of the proposition, we are left to show that the third order Taylor remainder $R_3(x,y)$ satisfies
\begin{align*}
    \sup_{(x,y)\in D_{2\eta^{2/5}}\left(\tfrac{N}{2},\tfrac{1}{2}\right)}|R_3(x,y)| + \eta^{2/5}|\nabla R_3(x,y)| \leq C_1 \eta^{8/5}/N.
\end{align*}
To do this, it is sufficient to show that
\begin{align*}
    |\nabla^{(\alpha)} v(x,y)| \leq C\eta^{2/5}/N, 
\end{align*}
for all multi-indices $\alpha = (\alpha_1,\alpha_2)$, with $|\alpha| = 3$, and $(x,y)\in D_{2\eta^{2/5}}\left(\tfrac{N}{2},\tfrac{1}{2}\right)$. By Proposition \ref{bounds-prop} \eqref{v1-prop} and \eqref{E-prop}, we can immediately reduce to showing that
\begin{align*}
    |\nabla^{(\alpha)} \left(v_2(x)\sin(2\pi y)\right)| \leq C\eta^{2/5}/N. 
\end{align*}
For $(x,y)$ in $D_{2\eta^{2/5}}\left(\tfrac{N}{2},\tfrac{1}{2}\right)$, we have $|\sin(2\pi y)| \leq 4\pi \eta^{2/5}$. Moreover, by the estimate on $v_2$ and its derivatives in Proposition \ref{bounds-prop} \eqref{v2-prop}, 
\begin{align*}
|v_2(x)| \leq C\eta^{2/5}N^{-1}, \quad |v_2'(x)|\leq CN^{-1}, \quad |v_2'''(x)|\leq CN^{-3}.
\end{align*}
Finally, since $v_2(x)$ satisfies the equation $v_2''(x) = (\mu - 4\pi^2)v_2(x)$, with $|\mu-4\pi^2| \leq CN^{-2}$, its second derivative can be therefore bounded by $C\eta^{2/5}N^{-3}$. In particular, all third derivatives of $v_2(x)\sin(2\pi y)$ can be bounded by $C\eta^{2/5}/N$ as required, completing the proof of Proposition \ref{prop:hyperbola-approx}.
\end{proof1}
\begin{proof1}{Proposition \ref{prop:hyperbola}}
We start by writing the polynomial $P(x,y)$ from Definition \ref{defn:hyperbola} as
\begin{align} \label{eqn:quadratic}
    \alpha p^2 + 2\gamma pq + \beta q^2 + 2ap + 2bp + c.
\end{align}
Here $p = x- \tfrac{N}{2}$, $q = y-\tfrac{1}{2}$. Since
\begin{align*}
    & v(\tfrac{N}{2},\tfrac{1}{2}) = v_1(\tfrac{N}{2}) + E(\tfrac{N}{2},\tfrac{1}{2}), \quad \pa_yv(\tfrac{N}{2},\tfrac{1}{2}) = -2\pi v_2(\tfrac{N}{2}) + \pa_yE(\tfrac{N}{2},\tfrac{1}{2}), \\
    & \hspace{2cm} \pa_xv(\tfrac{N}{2},\tfrac{1}{2}) = v_1'(\tfrac{N}{2}) + \pa_xE(\tfrac{N}{2},\tfrac{1}{2}),
\end{align*}
by Proposition \ref{bounds-prop} \eqref{v1-prop} and \eqref{E-prop}, the coefficients $a$, $b$, and $c$ satisfy
\begin{align} \label{eqn:coefficients1}
    |a| \leq C\eta/N, \quad |b| \leq C\eta/N, \quad C^{-1}\eta N^{-1} \leq |c| \leq C\eta N^{-1}.
\end{align}
Note that to obtain the upper and lower bound on $|c|$ we have used Assumption \ref{assum:phi} together with the estimate from Proposition \ref{bounds-prop} \eqref{v1-prop} that says that $|v_1(\tfrac{N}{2})|$ is comparable to $|v_1(0)|$. The same proposition implies that the coefficients $\alpha$, $\beta$, $\gamma$ satisfy
\begin{align} \label{eqn:coefficients2}
    |\alpha| \leq C\eta/N, \quad |\beta| \leq C\eta/N, \quad C^{-1}N^{-1} \leq |\gamma| \leq CN^{-1}.
\end{align}
The quadratic form in \eqref{eqn:quadratic} will describe a hyperbola provided $\begin{vmatrix} \alpha & \gamma \\ \gamma & \beta \end{vmatrix} = \det \mathbf{H} < 0$. By \eqref{eqn:coefficients2}, we have the following lower bound
\begin{align*}
|\det \mathbf{H}| \geq C^{-1} N^{-2} - C\left(\tfrac{\eta}{N}\right)^2 \geq \tfrac{1}{2}C^{-1} N^{-2}
\end{align*}
for $\eta>0$ sufficiently small, and so the quadratic form is indeed a hyperbola. Letting $(x_c,y_c)$ denote the center of the hyperbola, we may write the hyperbola as
\begin{equation}\label{pre-simplified-hyperbola}
\mu_1^2 v_1\left(\tfrac{N}{2}\right)(x - x_c)^2 + 4\pi v'_2\left(\tfrac{N}{2}\right) (x-x_c)(y-y_c) + \pi^2 v_1\left(\tfrac{N}{2}\right) (y - y_c)^2 =\frac{D}{\det \mathbf{H}},
\end{equation}
where
\begin{align*}
D = \begin{vmatrix}
\alpha & \gamma & a \\
\gamma & \beta & b \\
a & b & c
\end{vmatrix} = -(\alpha b^2 + \beta a^2) + 2ab\gamma + c\det\mathbf{H}.
\end{align*}
Using Proposition \ref{bounds-prop}, and the lower bound on $\det \mathbf{H}$, we have
\begin{align*}
\left| \frac{D}{\det \mathbf{H}} - c\right| = \left| \frac{-(\alpha b^2 + \beta a^2) + 2ab\gamma}{\det \mathbf{H}}\right| &\leq C\eta^2/N.
\end{align*}
Therefore, using the bounds on $|c|$ in \eqref{eqn:coefficients1}, the right hand side of \eqref{pre-simplified-hyperbola} is comparable to $\eta /N$ for $\eta>0$ sufficiently small. This gives the desired bound on $|P_0|$. Since the center $(x_c,y_c)$ is given by
\begin{align*}
    x_c -\tfrac{N}{2} = -\frac{1}{\det\mathbf{H}}\begin{vmatrix}
a & \gamma  \\
b & \beta 
\end{vmatrix} \qquad y_c - \tfrac{1}{2} = - \frac{1}{\det\mathbf{H}} \begin{vmatrix}
\alpha & a  \\
\gamma & b 
\end{vmatrix},
\end{align*}
we also obtain the estimates on $(x_c,y_c)$ in i) in the Proposition.
\\
\\
The principal axes of the hyperbola makes an angle $\varphi$ with the positive $x$-axis where $\varphi$ satisfies
\begin{align} \label{eqn:tangent}
    \tan(2\varphi) = \frac{2\gamma}{\alpha - \beta} = \frac{8\pi v_2'(\tfrac{N}{2})}{(\mu_1^2 - \pi^2)v_1(\tfrac{N}{2})}. 
\end{align}
Using Assumptions \ref{assum:simple} and \ref{assum:phi}, the denominator is comparable to $\eta /N$, while by Proposition \ref{bounds-prop} \eqref{v2-prop}, the numerator is comparable to $N^{-1}$. Therefore,
\begin{align*}
    \left|\cot(2\varphi)\right| = \frac{1}{\left|\tan(2\varphi)\right|} \leq C\eta,
\end{align*}
giving the estimate $\left|2|\varphi| - \tfrac{\pi}{2}\right| \leq C\eta$. Moreover, since $\mu_1^2-\pi^2>0$, we see from \eqref{eqn:tangent}, that the sign of $\tan(2\varphi)$ is equal to the sign of $v_2'(\tfrac{N}{2})/v_1(\tfrac{N}{2})$, which hence proves part iii) of the Proposition.
\\
\\
Finally, rotating the coordinates by this angle $\varphi$, the hyperbola can be written as 
    $\frac{\tilde{x}^2}{\tilde{a}^2} - \frac{\tilde{y}^2}{\tilde{b}^2} = 1.$
Here 
\begin{align*}
    \tilde{a}^2 = \left|\frac{D}{\lambda_1\det\mathbf{H}}\right|, \quad   \tilde{b}^2 = \left|\frac{D}{\lambda_2\det\mathbf{H}}\right|,
\end{align*}
where $\lambda_1$ and $\lambda_2$ are the roots of the quadratic $\lambda^2 -(\alpha+\beta)\lambda + \det\mathbf{H}=0$. By the estimates in \eqref{eqn:coefficients2}, $|\lambda_1|$, $|\lambda_2|$ are comparable to $N^{-1}$, and $D/\det\mathbf{H}$ is comparable to $\eta/N$. In particular, $\tilde{a}^2$ is comparable to $\eta$. 
This ensures that the distance between the vertices of the hyperbola is comparable to $\sqrt{\eta}$, completing the proof of ii) in the Proposition. 
\end{proof1}

\section{Estimates on the Nodal Set Away from the Center} \label{sec:edge}

In this section we will study the nodal set of $v$ outside of a square of side length $C^*\sqrt{\eta}$, centered at $(\tfrac{N}{2},\tfrac{1}{2})$. Fixing $C^*>0$ to be a sufficiently large absolute constant, by the work in the previous section, the nodal set of $v$ enters this square along precisely four curves, one for each side of the square. We will now show that the nodal set outside of this square consists of an extension of these four curves to the boundary of $\Omega$. We will prove that two of the curves are almost horizontal and contained in the strip $\left[\tfrac{1}{2}-\tfrac{1}{2}C^*\sqrt{\eta},\tfrac{1}{2}+\tfrac{1}{2}C^*\sqrt{\eta}\right]$ and the other two curves are almost vertical and contained in the strip $\left[\tfrac{N}{2}-\tfrac{1}{2}C^*\sqrt{\eta},\tfrac{N}{2}+\tfrac{1}{2}C^*\sqrt{\eta}\right]$. The parts of these curves that enter the parts of $\Omega$ with $x<1$ or $x>N-1$ and $|y-\tfrac{1}{2}| \leq \tfrac{1}{4}$, before touching the left and right boundaries of $\Omega$ require some extra analysis, which we will carry out in Section \ref{sec:perpendicular}. The results in this section are valid for all $N>4$ and for $\eta>0$ sufficiently small independent of the size of $N$. 

\begin{Def}
In the current section, we denote $\tilde{\Omega}$ to be the part of $\Omega$ outside of the square centered at $(\tfrac{N}{2},\tfrac{1}{2})$, and excluding these neighborhoods of the left and right boundary sides of $\Omega$. 
\end{Def}

\begin{prop} \label{prop:horizontal}
There exist constants $\eta_0>0$ and $C_1>0$ such that for all $N>4$ and $0<\eta<\eta_0$, the nodal set of $v$ has the following properties: Let $(x_0,y_0)$ be in the nodal set $v^{-1}(0)$, with $(x_0,y_0)\in\tilde{\Omega}$ and $|x_0-\tfrac{N}{2}|>\tfrac{1}{2}C^*\sqrt{\eta}$, $|y_0-\tfrac{1}{2}| <\tfrac{1}{4}$. Then, there exists a function $h(x)$ and an open set $U$ containing $(x_0,y_0)$ such that $v^{-1}(0)\cap U = \{(x,h(x))\}$. Moreover, if $|x_0-\tfrac{N}{2}|>\tfrac{1}{10}$, then for all $(x,h(x))\in U$, 
\begin{align*}
    \left|h(x)-\tfrac{1}{2}\right|  + |h'(x)| \leq \frac{C_1 \eta}{N\left|\sin\left(\tfrac{2\pi}{N}x\right)\right| + 1},
\end{align*}
while if $\tfrac{1}{2}C^*\sqrt{\eta} \leq |x_0-\tfrac{N}{2}|\leq \tfrac{1}{10}$, then for all $x\in U$,
\begin{align*}
    \left|h(x)-\tfrac{1}{2}\right| \leq \frac{C_1 \eta}{\left|x-\tfrac{N}{2}\right|}, \qquad
    |h'(x)|  \leq  \frac{C_1 \eta}{\left|x-\tfrac{N}{2}\right|^2}.
\end{align*}
\end{prop}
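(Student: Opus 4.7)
The plan is to apply the implicit function theorem to $v(x,y)=0$ in a neighborhood of the nodal point $(x_0,y_0)$, using the Fourier decomposition \eqref{ansatz} and the bounds from Proposition \ref{bounds-prop} for $v_1$, $v_2$, and $E$. The key structural observation is that near $y=\tfrac{1}{2}$ the factor $\sin(2\pi y)=-\sin(2\pi(y-\tfrac{1}{2}))$ vanishes linearly while $\sin(\pi y)=\cos(\pi(y-\tfrac{1}{2}))$ is close to $1$, so to leading order the nodal equation is a small perturbation of $2\pi v_2(x)(y-\tfrac{1}{2})=v_1(x)+E(x,\tfrac{1}{2})$.

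First I would derive the a priori bound on $|y_0-\tfrac{1}{2}|$ directly from $v(x_0,y_0)=0$. Rearranging,
\[
|v_2(x_0)\sin(2\pi y_0)|=|v_1(x_0)\sin(\pi y_0)+E(x_0,y_0)|\leq C\eta/N
\]
by Proposition \ref{bounds-prop}, and combining with the elementary bound $|\sin(2\pi(y_0-\tfrac{1}{2}))|\geq 4|y_0-\tfrac{1}{2}|$ for $|y_0-\tfrac{1}{2}|\leq\tfrac{1}{4}$, I obtain $|y_0-\tfrac{1}{2}|\leq C\eta/(N|v_2(x_0)|)$. Proposition \ref{bounds-prop}\eqref{v2-prop} then lower-bounds $|v_2(x_0)|$ regime-by-regime: for $|x_0-\tfrac{N}{2}|>\tfrac{1}{10}$, the estimate $|v_2(x)-\sin(\tfrac{2\pi x}{N})|\leq C\eta/N$ combined with the uniform lower bound $|\sin(\tfrac{2\pi x}{N})|\geq c/N$ on $\left([1,N-1]\setminus\{N/2\}\right)\cap\{|x-N/2|>\tfrac{1}{10}\}$ yields $|v_2(x_0)|\geq c|\sin(\tfrac{2\pi x_0}{N})|$ and $N|v_2(x_0)|\geq c(N|\sin(\tfrac{2\pi x_0}{N})|+1)$; for $\tfrac{1}{2}C^{*}\sqrt{\eta}\leq|x_0-\tfrac{N}{2}|\leq\tfrac{1}{10}$, Taylor expansion of $v_2$ at its zero $x^{*}$ (with $|x^{*}-\tfrac{N}{2}|\leq C\eta\ll\tfrac{1}{2}C^{*}\sqrt{\eta}$) gives $|v_2(x_0)|\geq c|x_0-\tfrac{N}{2}|/N$. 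These yield the claimed bounds on $|h-\tfrac{1}{2}|$ in the two regimes.

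To invoke the implicit function theorem I would check that $\partial_y v$ is bounded away from zero on a neighborhood of $(x_0,y_0)$. Differentiating \eqref{ansatz},
\[
\partial_y v=\pi v_1(x)\cos(\pi y)+2\pi v_2(x)\cos(2\pi y)+\partial_y E,
\]
and near $y=\tfrac{1}{2}$ the dominant contribution is $-2\pi v_2(x)$ since $|\cos(\pi y)|\leq C|y-\tfrac{1}{2}|$ and $\cos(2\pi y)=-1+O(|y-\tfrac{1}{2}|^{2})$. The lower bound on $|v_2|$, the a priori smallness of $|y-\tfrac{1}{2}|$, and the bound $|\partial_y E|\leq C\eta/N$ together show that the remaining terms are strictly smaller than $\pi|v_2(x)|$, so $|\partial_y v|\geq\pi|v_2(x)|>0$ on a small neighborhood of $(x_0,y_0)$. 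The implicit function theorem then produces the smooth graph $y=h(x)$, and $|h(x)-\tfrac{1}{2}|$ satisfies the bound derived above.

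Finally, $h'(x)=-\partial_x v/\partial_y v$. I would expand $\partial_x v=v_1'(x)\sin(\pi h)+v_2'(x)\sin(2\pi h)+\partial_x E$, where the first and third terms are $O(\eta/N)$ by Proposition \ref{bounds-prop} and $|v_2'(x)\sin(2\pi h)|\leq(C/N)|h-\tfrac{1}{2}|$. Dividing by $|\partial_y v|\geq\pi|v_2(x)|$, in the outer regime one obtains
\[
|h'(x)|\leq\frac{C\eta/N+C|h-\tfrac{1}{2}|/N}{c|\sin(\tfrac{2\pi x}{N})|}\leq\frac{C\eta}{N|\sin(\tfrac{2\pi x}{N})|+1},
\]
using again $N|\sin(\tfrac{2\pi x}{N})|\geq c$; in the inner regime, substituting $|v_2'|\sim 1/N$, $|h-\tfrac{1}{2}|\leq C\eta/|x-\tfrac{N}{2}|$, and $|v_2(x)|\geq c|x-\tfrac{N}{2}|/N$ identifies the $v_2'\sin(2\pi h)$ term as leading and produces $|h'(x)|\leq C\eta/|x-\tfrac{N}{2}|^{2}$. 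The main technical obstacle is this quadratic blow-up near the center, which only emerges after combining the three separate scales of $|v_2'|$, $|h-\tfrac{1}{2}|$, and $|v_2|$; verifying that in each regime the correct term of $\partial_x v$ dominates, and that the contributions from $v_1$, $\partial_x E$, and the higher-order Taylor remainders in $\sin(\pi y)$ and $\cos(2\pi y)$ are all subordinate, is where essentially all of the bookkeeping lies.
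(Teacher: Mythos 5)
Your proposal is correct and follows essentially the same route as the paper: an a priori localization of the nodal set from the dominance of the $v_2(x)\sin(2\pi y)$ term, the implicit function theorem via a lower bound on $\partial_y v$ controlled by $|v_2(x)|\gtrsim \sqrt{\eta}/N$, and the quotient bound $h'=-\partial_x v/\partial_y v$ analyzed regime by regime. The only cosmetic differences are that the paper absorbs $v_1(x)\sin(\pi y)$ into a single error term $B(x,y)$ and lower-bounds $|\sin(2\pi x/N)|$ directly rather than $|v_2(x_0)|$ via Taylor expansion about $x^*$; these are equivalent.
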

\begin{prop} \label{prop:vertical}
There exist constants $\eta_0>0$ and $C_2>0$ such that for all $N>4$ and $0<\eta<\eta_0$, the nodal set of $v$ has the following properties: Let $(x_0,y_0)$ be in the nodal set $v^{-1}(0)$, with $(x_0,y_0)\in\tilde{\Omega}$ and $|y_0-\tfrac{1}{2}|>\tfrac{1}{2}C^*\sqrt{\eta}$. Then, there exists a function $g(y)$ and an open set $V$ containing $(x_0,y_0)$ such that $v^{-1}(0)\cap V = \{(g(y),y)\}$. Moreover, for all $(g(y),y)\in V$, 
\begin{align*}
    \left|g(y)-\tfrac{N}{2}\right| \leq \frac{C_2 \eta}{\left|y-\tfrac{1}{2}\right|}, \qquad
    |g'(y)|  \leq  \frac{C_2 \eta}{\left|y-\tfrac{1}{2}\right|^2}.
\end{align*}
\end{prop}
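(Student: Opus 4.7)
The proof parallels that of Proposition \ref{prop:horizontal} with the roles of the horizontal and vertical variables interchanged. The plan is to apply the implicit function theorem to $v(x,y) = 0$ at the nodal point $(x_0, y_0)$, solving for $x = g(y)$. The dominant contribution to $v$ in the decomposition \eqref{ansatz} is the second mode $v_2(x)\sin(2\pi y)$, which vanishes on $x = \tfrac{N}{2}$ and $y = \tfrac{1}{2}$, so the nearly vertical nodal curves arise as small perturbations of the line $x = \tfrac{N}{2}$.

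For the position bound $|g(y) - \tfrac{N}{2}| \leq C_2\eta/|y - \tfrac{1}{2}|$, rewrite the nodal equation at $(x_0, y_0)$ as
\begin{align*}
v_2(x_0)\sin(2\pi y_0) = -v_1(x_0)\sin(\pi y_0) - E(x_0, y_0),
\end{align*}
and bound the right-hand side by $C\eta/N$ using Proposition \ref{bounds-prop} \eqref{v1-prop} and \eqref{E-prop}. For $|y_0 - \tfrac{1}{2}| \leq \tfrac{1}{4}$, where $|\sin(2\pi y_0)| \geq c|y_0 - \tfrac{1}{2}|$, this gives $|v_2(x_0)| \leq C\eta/(N|y_0 - \tfrac{1}{2}|)$. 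Since $v_2(x) \approx \sin(\tfrac{2\pi x}{N})$ by Proposition \ref{bounds-prop} \eqref{v2-prop}, and this function is linear near $\tfrac{N}{2}$ with slope of order $1/N$, the bound transfers to $|x_0 - \tfrac{N}{2}| \leq C\eta/|y_0 - \tfrac{1}{2}|$, which in particular confirms a posteriori that $x_0$ lies in the strip near $\tfrac{N}{2}$ where the linearization is valid.

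To invoke the implicit function theorem, differentiate the decomposition to obtain
\begin{align*}
\partial_x v(x_0, y_0) = v_1'(x_0)\sin(\pi y_0) + v_2'(x_0)\sin(2\pi y_0) + \partial_x E(x_0, y_0).
\end{align*}
The dominant term is $v_2'(x_0)\sin(2\pi y_0)$ with $|v_2'(x_0)| \sim 1/N$ from Proposition \ref{bounds-prop} \eqref{v2-prop}, while the other terms are of order $\eta/N$. Hence $|\partial_x v(x_0, y_0)| \geq cN^{-1}|y_0 - \tfrac{1}{2}|$ once $C^*$ is chosen large enough to absorb the error. The implicit function theorem then produces $g$ on a neighborhood $V$ of $y_0$, and the derivative is obtained from $g'(y) = -\partial_y v/\partial_x v$. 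An analogous expansion $\partial_y v = \pi v_1\cos(\pi y) + 2\pi v_2\cos(2\pi y) + \partial_y E$ together with the bound $|v_2(x_0)| \leq C\eta/(N|y_0 - \tfrac{1}{2}|)$ from the previous step yields $|\partial_y v(x_0, y_0)| \leq C\eta/(N|y_0 - \tfrac{1}{2}|)$, and dividing gives $|g'(y)| \leq C\eta/|y - \tfrac{1}{2}|^2$.

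The main obstacle is the regime $|y_0 - \tfrac{1}{2}| > \tfrac{1}{4}$, where $y_0$ is close to the top or bottom boundary, the simple comparison $|\sin(2\pi y_0)| \geq c|y_0 - \tfrac{1}{2}|$ fails, and both $\sin(2\pi y_0)$ and $\sin(\pi y_0)$ can be small. To handle this, the plan is to factor $\sin(2\pi y) = 2\sin(\pi y)\cos(\pi y)$ in the nodal equation and exploit the fact that $E(x, 0) = E(x, 1) = 0$, since every mode $\sin(k\pi y)$ vanishes there. Combined with $|\partial_y E| \leq C\eta/N$ this yields $|E(x, y)| \leq C\eta\min(y, 1-y)/N$, so an extra factor of $\sin(\pi y)$ can be extracted from all three terms. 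The resulting identity $|v_1(x_0) + 2v_2(x_0)\cos(\pi y_0)| \leq C\eta/N$ then implies $|v_2(x_0)| \leq C\eta/N$, hence $|x_0 - \tfrac{N}{2}| \leq C\eta$, which matches the claimed bound in the regime $|y_0 - \tfrac{1}{2}| \sim 1$. A parallel adjustment handles the derivative estimate.
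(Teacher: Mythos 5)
Your treatment of the regime $|y_0-\tfrac12|\leq\tfrac14$ matches the paper's region a) argument essentially step for step, and your factorization $\sin(2\pi y)=2\sin(\pi y)\cos(\pi y)$ together with $E(x,0)=E(x,1)=0$ is a workable variant of the paper's Taylor-expansion-of-$B$ argument near the top and bottom edges. However, there is a genuine gap in the step where you pass from $|v_2(x_0)|\leq C\eta/N$ to $|x_0-\tfrac{N}{2}|\leq C\eta$ in the regime $|y_0-\tfrac12|>\tfrac14$. The function $v_2$ is not only small near $x=\tfrac{N}{2}$: one has $|v_2(0)|\leq C\eta/N$ (Lemma \ref{lem:v2}) and $v_2(N)=0$, so $|v_2(x_0)|\leq C\eta/N$ is equally consistent with $x_0$ lying within $O(\eta)$ of the left or right boundary. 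The corner regions $\{x<1\text{ or }x>N-1\}\cap\{|y-\tfrac12|>\tfrac14\}$ \emph{are} part of $\tilde{\Omega}$ (only the boundary neighborhoods with $|y-\tfrac12|\leq\tfrac14$ are excluded), so the proposition requires showing the nodal set does not enter them, and your argument does not do this. The paper closes this with a separate topological/spectral step: by the interior estimates the nodal set cannot cross the lines $x=1$ or $y=\tfrac14$ into the corner, so a nodal component there would bound a nodal domain contained in the strip $0\leq y\leq\tfrac14$, forcing $\mu\geq 16\pi^2$ by domain monotonicity, contradicting $\mu\leq 4\pi^2(1+\tfrac{1}{N^2})$. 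Some argument of this kind (your pointwise bounds on $v$ degenerate in the corners, where $E$ is not exponentially small and $\sin(2\pi x/N)$ vanishes) is indispensable.

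A secondary issue: near $y=0$ and $y=1$ the derivative bound is only asserted via ``a parallel adjustment.'' Note that the naive bounds $|\pa_y v|\leq C\eta/N$ and $|\pa_x v|\geq c|y_0|/N$ give $|g'(y)|\leq C\eta/|y_0|$, which blows up as $y_0\to0$ and exceeds the claimed $C_2\eta/|y-\tfrac12|^2=O(\eta)$. One must extract an extra factor of $|y_0|$ from $|\pa_yv|$ (and likewise justify the factor of $|y_0|$ in the lower bound for $|\pa_xv|$, since $|\pa_xE|\leq C\eta/N$ alone does not carry it); this is exactly the cancellation the paper performs in \eqref{eqn:graph4}--\eqref{eqn:graph5} using the vanishing of $B$ at $y=0$ and the second-derivative bounds \eqref{eqn:B-bounds2}. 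Your factorization can be pushed to produce this cancellation, but as written the step is missing.
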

Combining Propositions \ref{prop:horizontal} and \ref{prop:vertical} with the work in Section \ref{sec:center}, this completes the proof of Theorems  \ref{thm:nodal-gap-estimate} and \ref{thm:regularity}, except for the orthogonality statement in Theorem \ref{thm:regularity} iii).  To prove these propositions, we will use a different decomposition of $v$ than for the center analysis in Section \ref{sec:center}. This is because in this section, the second Fourier mode $v_2(x)\sin(2\pi y)$ will provide the major contribution to the eigenfunction, and we can group the first Fourier mode with the rest of the error term. We therefore write
\begin{align} \label{eqn:B-decom}
v(x,y) = v_2(x)\sin(2\pi y) + B(x,y),
\end{align}
where $B(x,y) = v_1(x)\sin(\pi y) + E(x,y)$. By the properties of $v_1(x)$ and $E(x,y)$ in Proposition \ref{bounds-prop} \eqref{v1-prop} and \eqref{E-prop}, $B(x,y)$ satisfies
\begin{align} \label{eqn:B-bounds1}
   \sup_{(x,y)\in\Omega}|B(x,y)| + |\nabla B(x,y)| \leq C\eta/N
\end{align}
and
\begin{align} \label{eqn:B-bounds2}
   \sup_{ y\in[1/10,9/10]}\left|\nabla^j B(x,y) \right|  +   \displaystyle \sup_{x\in [N/4,3N/4]} \Big| \nabla^j B(x,y) \Big| \leq C\eta/N , \text{ for }  j =2,3.
\end{align}
We will also need to use the approximation on $v_2(x)$ from Proposition \ref{bounds-prop}  \eqref{v2-prop}, 
\begin{align} \label{eqn:horizontal1}
    \left|v_2(x) - \sin\left(\tfrac{2\pi}{N}x\right)\right| \leq C\eta/N.
\end{align}
Combining this with the estimate on $B(x,y)$ in \eqref{eqn:B-bounds1}, this implies that
\begin{align} \label{eqn:horizontal2}
    \left|v(x,y) - \sin\left(\tfrac{2\pi}{N}x\right)\sin(2\pi y )\right| \leq C\eta/N.
\end{align}

\begin{proof1}{Proposition \ref{prop:horizontal}}
We first obtain estimates on the location of the nodal set of $v$. Note that for $y\in[\tfrac{1}{4},\tfrac{3}{4}]$, $|\sin(2\pi y)|\geq 4|y-\tfrac{1}{2}|$. Therefore, for $(x,y)\in\tilde{\Omega}$, with $|y-\tfrac{1}{2}|<\tfrac{1}{4}$, the estimate in \eqref{eqn:horizontal2} gives the lower bound
\begin{align} \label{eqn:horizontal3}
    |v(x,y)| \geq |\sin\left(\tfrac{2\pi}{N}x\right)\sin(2\pi y )| - C\eta/N \geq 4|y-\tfrac{1}{2}|\left|\sin\left(\tfrac{2\pi}{N}x\right)\right| - C\eta/N.
\end{align}
For $|x-\tfrac{N}{2}|>\tfrac{1}{10}$, with $1<x<N-1$, 
\begin{align*}
    \left|\sin\left(\tfrac{2\pi}{N}x\right)\right| \geq cN^{-1},
\end{align*}
while for $\tfrac{1}{2}C^*\sqrt{\eta}\leq |x-\tfrac{N}{2}| \leq \tfrac{1}{10}$,
\begin{align*}
    \left|\sin\left(\tfrac{2\pi}{N}x\right)\right| \geq c|x-\tfrac{N}{2}|/N.
\end{align*}
Therefore, from \eqref{eqn:horizontal3}, given $(x_0,y_0)\in\tilde{\Omega}$ in the nodal set of $v$ with $|y_0-\tfrac{1}{2}| < \tfrac{1}{4}$, if $|x_0-\tfrac{N}{2}|>\tfrac{1}{10}$, then 
\begin{align} \label{eqn:horizontal4}
    |y_0-\tfrac{1}{2}| \leq \frac{C\eta}{N\left|\sin\left(\tfrac{2\pi}{N}x_0\right)\right| + 1},
\end{align}
and if $\tfrac{1}{2}C^*\sqrt{\eta}\leq |x_0-\tfrac{N}{2}| \leq \tfrac{1}{10}$, then
\begin{align} \label{eqn:horizontal5}
    |y_0-\tfrac{1}{2}| \leq \frac{C\eta}{|x_0-\tfrac{N}{2}|}.
\end{align}
Once we prove that the nodal set in this part of $\Omega$ can be written as a graph $y=h(x)$, the above estimates on $y_0$ imply the bounds on $|h(x)-\tfrac{1}{2}|$ given in the statement of the proposition. 

To show this graph property, we obtain a lower bound on $\pa_yv(x_0,y_0)$ for $(x_0,y_0)$ in this part of the nodal set of $v$: Using \eqref{eqn:horizontal1} and the bound on $\pa_yB(x_0,y_0)$ from \eqref{eqn:B-bounds1}, we can bound $\pa_yv(x_0,y_0)$ from below by 
\begin{align*} 
    |\pa_yv(x_0,y_0)| = |2\pi v_2(x_0)\cos(2\pi y_0) + \pa_yB(x_0,y_0)| \geq 2\pi\left|\sin\left(\tfrac{2\pi}{N}x_0\right)\right||\cos(2\pi y_0)| - C\eta/N.
\end{align*}
We have $\left|\sin\left(\tfrac{2\pi}{N}x_0\right)\right|\geq c\sqrt{\eta}/N$ for $\tfrac{1}{2}C^*\sqrt{\eta} < \left|x_0-\tfrac{N}{2}\right| < \tfrac{N}{2}-1$. Moreover, using the bounds on the location of the nodal set in \eqref{eqn:horizontal4} and \eqref{eqn:horizontal5}, $|\cos(2\pi y_0)|$ is certainly bounded below. Therefore, on this part of the nodal set,
\begin{align} \label{eqn:horizontal6}
     |\pa_yv(x_0,y_0)| \geq c\left|\sin(\tfrac{2\pi}{N}x)\right| > 0.
\end{align}
This gives the existence of a neighborhood around each such point in the nodal set of $v$, on which the nodal set is given by $y=h(x)$. The derivative of $h(x)$ satisfies
\begin{align*}
    h'(x) = -\frac{\pa_xv(x,h(x))}{\pa_yv(x,h(x))}.
\end{align*}
Using \eqref{eqn:B-bounds1}, together with $|v_2'(x_0)|\leq CN^{-1}$ from Proposition \ref{bounds-prop} (2), we can bound $\pa_{x}v(x_0,y_0)$ by
\begin{align} \label{eqn:horizontal7}
    |\pa_xv(x_0,y_0)| = |v_2'(x_0)\sin(2\pi y_0) + \pa_xB(x_0,y_0)| \leq C|\sin(2\pi y_0)|/N + C\eta/N.
\end{align}
Dividing \eqref{eqn:horizontal7} by \eqref{eqn:horizontal6}, and using the bounds on the location of the nodal set in \eqref{eqn:horizontal4} and \eqref{eqn:horizontal5} we therefore have
\begin{align*}
    |h'(x)| \leq C \frac{\eta/N}{\left|\sin\left(\frac{2\pi}{N} x\right) \right|} \leq C\frac{\eta}{N\left|\sin\left(\frac{2\pi}{N} x\right) \right|+1} 
\end{align*}
for $|x-\tfrac{N}{2}|>\tfrac{1}{10}$, and
\begin{align*}
    |h'(x)| \leq C \frac{\frac{\eta}{N\left|x-\tfrac{N}{2}\right|}}{\left|x-\tfrac{N}{2} \right|/N}   = C\frac{\eta}{\left|x-\tfrac{N}{2} \right|^2}
\end{align*}
for $\tfrac{1}{2}C^*\sqrt{\eta} \leq |x-\tfrac{N}{2}| \leq \tfrac{1}{10}$. These bounds complete the proof of the proposition.
\end{proof1}
\begin{proof1}{Proposition \ref{prop:vertical}}
To prove this proposition, we work in the part of $\tilde{\Omega}$ with $|y-\tfrac{1}{2}|>\tfrac{1}{2}C^*\sqrt{\eta}$, which we split into three regions given by a) $1<x<N-1$, $|y-\tfrac{1}{2}|<\tfrac{1}{4}$, b) $1<x<N-1$, $|y-\tfrac{1}{2}|\geq\tfrac{1}{4}$, and c) $x<1$ or $x>N-1$, $|y-\tfrac{1}{2}|\geq\tfrac{1}{4}$. We first obtain estimates on the nodal set. \\
\\
In region a), we use the estimate on $v(x,y)$ from \eqref{eqn:horizontal2}, together with $|\sin(2\pi y)| \geq 4|y-\tfrac{1}{2}|$ for $|y-\tfrac{1}{2}|<\tfrac{1}{4}$, to obtain the lower bound
\begin{align*}
    |v(x,y)| \geq 4\left|\sin\left(\tfrac{2\pi}{N}x\right)\right|\left|y-\tfrac{1}{2}\right| - C\eta/N. 
\end{align*}
Since $|y-\tfrac{1}{2}|\geq \tfrac{1}{2}C^*\sqrt{\eta}$ in the part of $\tilde{\Omega}$ under consideration in this proposition, this estimate ensures that $v(x,y)$ cannot equal $0$ for $1<\left|x-\tfrac{N}{2}\right|<\tfrac{N}{2}-1$. Moreover, using
$\left|\sin\left(\tfrac{2\pi}{N}x\right)\right| \geq c\left|x-\tfrac{N}{2}\right|/N$ for $\left|x-\tfrac{N}{2}\right|<1$, this means that $v(x,y)\neq0$ when $|x-\tfrac{N}{2}|\geq C\eta/|y-\tfrac{1}{2}|$.
\\
\\
In region b), we can restrict to $1<x<N-1$, with $y\in[0,\tfrac{1}{4}]$, as the same argument will work for $y\in[\tfrac{3}{4},1]$. From $v(x,0)=B(x,0)\equiv0$, and using the first derivative bounds on $B(x,y)$ in \eqref{eqn:B-bounds1}, we have
\begin{align*}
    |v(x,y)-v_2(x)\sin(2\pi y)| = |B(x,y)| \leq C|y|\eta/N. 
\end{align*}
For $y\in[0,\tfrac{1}{4}]$, note that $\sin(2\pi y) \geq 4|y|$, and so
\begin{align*}
    |v(x,y)| \geq 4|v_2(x)||y|- C|y|\eta/N.
\end{align*}
The estimate from \eqref{eqn:horizontal1} therefore implies that $v(x,y)\neq0$ provided $\left|\sin\left(\tfrac{2\pi}{N}x\right)\right|\geq C_2\eta/N$, for a sufficiently large constant $C_2$. In particular, this shows that this part of the nodal set is contained in a strip around $x=\tfrac{N}{2}$ of size bounded by a multiple of $\eta$, giving the desired estimate on the location of the nodal line in region b).
\\
\\
In region c), we will consider the region $x<1$, $0\leq y \leq \tfrac{1}{4}$, as the other three corners in this region can be handled identically. By the estimates in regions a) and b), we know that no part of the nodal set can enter this region through the line $x=1$ or $y =\tfrac{1}{4}$. Therefore, if the nodal set intersects this region, there must be a nodal domain of $v$ contained in the part of $\Omega$ with $x<1$ and $y<\tfrac{1}{4}$, and the restriction of $v$ to this nodal domain is a first eigenfunction of the nodal domain with eigenvalue $\mu$. However, as this part of $\Omega$ is contained in the strip $0 \leq y \leq \tfrac{1}{4}$, by domain monotonicity its first eigenvalue is bounded below by $16\pi^2$, which is strictly greater than $\mu$. Therefore, the nodal set of $v$ does not intersect region c). 
\\
\\
Therefore, once we have shown that the nodal set in regions a) and b) can be written as a graph $x=g(y)$, we have established the estimate on $|g(y)-\tfrac{N}{2}|$ in the statement of the proposition. To show that this part of the nodal set is a graph $x=g(y)$, we need to show that $\pa_xv(x_0,y_0)\neq0$ whenever $(x_0,y_0)$ is in the nodal set in regions a) and b). Once we have established this, then the derivative of $g(y)$ is given by $g(y)= -\pa_yv(g(y),y)/\pa_xv(g(y),y)$.
\\
\\
In region a), fix $(x_0,y_0)$ in the nodal set of $v$, so that in particular $|x_0-\tfrac{N}{2}|<C\eta/|y_0-\tfrac{1}{2}|<C\eta^{1/2}$. This in particular guarantees from the estimate on $v_2'(x)$ in Proposition \ref{bounds-prop} \eqref{v2-prop} that $|v_2'(x_0)|$ is comparable to $N^{-1}$. Therefore, using the bound on $\pa_xB(x_0,y_0)$ from \eqref{eqn:B-bounds1},
\begin{align*}
    |\pa_xv(x_0,y_0)| = |v_2'(x_0)\sin(2\pi y_0) + \pa_xB(x_0,y_0)|\geq c|y_0-\tfrac{1}{2}|/N- C\eta/N.
\end{align*}
 Since in the domain $\tilde{\Omega}$, we have $|y_0-\tfrac{1}{2}|\geq\tfrac{1}{2}C^*\sqrt{\eta}$, for $\eta>0$ sufficiently small,
\begin{align} \label{eqn:graph1}
    |\pa_xv(x_0,y_0)| \geq \tfrac{1}{2}c|y_0-\tfrac{1}{2}|/N>0,
\end{align}
and this part of the nodal set can be expressed as a graph $x=g(y)$. For an upper bound on $\pa_yv(x_0,y_0)$, we first use \eqref{eqn:B-bounds1}, to obtain the estimate
\begin{align*}
    |\pa_yv(x_0,y_0)| = |2\pi v_2(x_0)\cos(2\pi y_0) + \pa_yB(x_0,y_0)| \leq 2\pi|v_2(x_0)||\cos(2\pi y_0)| + C\eta/N.
\end{align*}
By Proposition \ref{bounds-prop} \eqref{v2-prop}, $|v_2(x_0)| \leq C|x_0-x^*|/N \leq C|x_0-\tfrac{N}{2}|/N + C\eta/N$. Combining this with the location estimate $\left|x_0-\tfrac{N}{2}\right|\leq C\eta/|y_0-\tfrac{1}{2}|$, we have
\begin{align} \label{eqn:graph2}
 |\pa_yv(x_0,y_0)| \leq C\left|x_0-\tfrac{N}{2}\right|/N + C\eta/N \leq C\frac{\eta}{N|y_0-\tfrac{1}{2}|}.
\end{align}
Dividing \eqref{eqn:graph2} by \eqref{eqn:graph1} gives the desired bound on $g'(y)$.
\\
\\
In region b), we again restrict to the part with $y\in[0,\tfrac{1}{4}]$. Since $\sin(2\pi y)$ equals $0$ at $y=0$, we need a modified argument for this part of $\tilde{\Omega}$. Again for $(x_0,y_0)$ in the nodal set of $v$ we have that $|v_2'(x_0)|$ is comparable to $N^{-1}$. Using $v(x,0) = B(x,0) \equiv 0$, by \eqref{eqn:B-bounds2} we therefore have the bound
\begin{align*}
    |\pa_{x}v(x_0,y_0)| = |v_2'(x_0)\sin(2\pi y_0) + \pa_xB(x_0,y_0)| \geq c|y_0|/N - C\eta|y_0|/N.
\end{align*}
Thus, for $\eta>0$ sufficiently small,
\begin{align} \label{eqn:graph3}
    |\pa_xv(x_0,y_0)|\geq \tfrac{1}{2}c|y_0|/N>0,
\end{align}
guaranteeing that this part of the nodal set is a graph. We now seek an upper bound on $\pa_yv(x_0,y_0)$ that contains a factor of $|y_0|$. Since $v(x_0,y_0)=0$, we have $v_2(x_0) = -\frac{B(x_0,y_0)}{\sin(2\pi y_0)}$. This lets us write
\begin{align} \nonumber
    \pa_yv(x_0,y_0) & = 2\pi v_2(x_0)\cos(2\pi y_0) + \pa_yB(x_0,y_0) \\ \label{eqn:graph4}
    & = -2\pi\frac{\cos(2\pi y_0)}{\sin(2\pi y_0)}B(x_0,y_0) + \pa_yB(x_0,y_0).
\end{align}
By the bounds on $\pa_y^2B(x,y)$ from \eqref{eqn:B-bounds2}, and writing 
\begin{align*}
    B(x_0,y_0) & = y_0\pa_yB(x_0,0) + \tfrac{1}{2}y_0^2\pa^2_yB(x_0,c_1), \\ \pa_yB(x_0,y_0) & = \pa_yB(x_0,0) + y_0\pa^2_yB(x_0,c_2), 
\end{align*}
for some $c_1,c_2\in[0,y_0]$, the expression in \eqref{eqn:graph4} can be written as
\begin{align*}
    \pa_yv(x_0,y_0) = -2\pi\frac{\cos(2\pi y_0)}{\sin(2\pi y_0)}y_0 \pa_yB(x_0,0) + \pa_yB(x_0,0) + \text{ Error}.
\end{align*}
Here the Error can be bounded in absolute value by $C|y_0|\eta/N$. Finally, using $\left|2\pi \frac{\cos(2\pi y_0)}{\sin(2\pi y_0)} - \frac{1}{y_0}\right| \leq C|y_0|$, we obtain
\begin{align} \label{eqn:graph5}
      |\pa_yv(x_0,y_0)| \leq C|y_0|\eta/N.
\end{align}
Dividing \eqref{eqn:graph5} by \eqref{eqn:graph3} gives the desired bound on $g'(y)$, and completes the proof of the proposition. 
\end{proof1}

\section{Estimates on the Nodal Set at the Boundary} \label{sec:perpendicular}

To complete the proof of Theorem \ref{thm:regularity}, we need to establish the orthogonality of the nodal set of $v$ at the four points where it meets the boundary of $\Omega$. Again, this argument is valid for $N>4$, and all $\eta>0$ sufficiently small independent of $N$. The argument is most complicated for the left boundary of $\Omega$ as this is the non-flat side, and so we first focus on this case: Let $v(x_0,y_0)=0$, with $x_0<1$, and suppose that $(x_1,y_1) = (\eta\phi_L(y_1),y_1)$ is the closest point to $(x_0,y_0)$ on $\partial\Omega$. We now rotate the domain so that $(x_1, y_1)$ is directly to the left of $(x_0, y_0)$. That is, we introduce new coordinates $(\tilde{x},\tilde{y}) = F^{-1}(x,y)$, where $F$ is the linear isometry obtained by rotating around $(x_0, y_0)$. We then define $\tilde{v}(\tilde{x},\tilde{y})$ to be equal to the eigenfunction $v$ in these rotated coordinates, $\tilde{v} = v\circ F$. Note that by the bounds on $\phi_L$, the angle of rotation is bounded by $C\eta$. Also, if the left boundary of $\Omega$ is given by $\tilde{x} =\eta\tilde{\phi}_{L}(\tilde{y})$, and the point closest to $(x_0,y_0)$ is thus $(\eta\tilde{\phi}_{L}(y_0),y_0)$ in the new coordinates, then $\tilde{\phi}'_{L}(y_0) = 0$. The eigenfunction in these rotated coordinates has the following decomposition. 
\begin{lemma} \label{lem:rotation}
There exist constants $\eta_0>0$ and $C>0$ such that for all $N>4$ and $0<\eta<\eta_0$, the following holds: For $(\tilde{x},\tilde{y})\in\Omega$, with $\tilde{x}<1$, $\tfrac{1}{5}<\tilde{y}<\tfrac{4}{5}$, the eigenfunction $\tilde{v}$ can be written as
\begin{align*}
    \tilde{v}(\tilde{x},\tilde{y}) = \tilde{w}_2(\tilde{x})\sin(2\pi\tilde{y}) + \tilde{B}(\tilde{x},\tilde{y}),
\end{align*}
with $\tilde{w}_2(\eta\tilde{\phi}_{L}(y_0)) = 0$ and
\begin{align*}
    \left|\tilde{w}_2'(\tilde{x})\right|\sim N^{-1}, \quad \left|\tilde{w}_2''(\tilde{x})\right| \leq CN^{-2}, \quad \left|\tilde{w}_2'''(\tilde{x})\right| \leq CN^{-3}.
\end{align*}
The error term $\tilde{B}(\tilde{x},\tilde{y})$ satisfies
\begin{align*}
    \left|\nabla^j \tilde{B}(\tilde{x},\tilde{y})\right| \leq C\eta/N, \text{ with } 0 \leq j \leq 3.
\end{align*}
\end{lemma}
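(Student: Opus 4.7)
The plan is to set $\tilde{w}_2(\tilde{x}) := v_2(\tilde{x}) - v_2(\eta\tilde{\phi}_L(y_0))$ and $\tilde{B}(\tilde{x},\tilde{y}) := \tilde{v}(\tilde{x},\tilde{y}) - \tilde{w}_2(\tilde{x})\sin(2\pi\tilde{y})$, so that the normalization $\tilde{w}_2(\eta\tilde{\phi}_L(y_0)) = 0$ holds by construction. The derivative estimates on $\tilde{w}_2$ reduce to those on $v_2$. By Proposition \ref{bounds-prop} \eqref{v2-prop}, $v_2(\tilde{x})$ and $v_2'(\tilde{x})$ are within $O(\eta/N)$ and $O(\eta/N^2)$ of $\sin(2\pi\tilde{x}/N)$ and $(2\pi/N)\cos(2\pi\tilde{x}/N)$; for $\tilde{x} < 1$ this gives the lower bound $|v_2'(\tilde{x})| \sim N^{-1}$. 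The higher-order bounds come from the ODE $v_2'' = -(\mu - 4\pi^2)v_2$ together with $|\mu - 4\pi^2| \leq CN^{-2}$ and the smallness $|v_2(\tilde{x})| \leq C/N$ on this region.

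To control $\tilde{B}$, I would substitute the original decomposition into $\tilde{v} = v \circ F$ and rearrange as
\begin{equation*}
\tilde{B} = v_1(x)\sin(\pi y) + E(x,y) + v_2(\eta\tilde{\phi}_L(y_0))\sin(2\pi\tilde{y}) + \bigl[v_2(x)\sin(2\pi y) - v_2(\tilde{x})\sin(2\pi\tilde{y})\bigr],
\end{equation*}
where $(x,y) = F(\tilde{x},\tilde{y})$. Each of the first three summands, and its derivatives through order three, are directly $O(\eta/N)$: $v_1$ by Proposition \ref{bounds-prop} \eqref{v1-prop}; $E$ by Proposition \ref{bounds-prop} \eqref{E-prop} (the range $\tilde{y} \in (1/5, 4/5)$ and an $O(\eta)$ rotation keep $y \in [1/10,9/10]$, so the interior derivative bound applies); and the constant $v_2(\eta\tilde{\phi}_L(y_0)) = O(\eta/N)$ because $v_2 \approx \sin(2\pi\,\cdot\,/N)$ is evaluated at an argument of size $O(\eta)$.

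The main obstacle is the final bracketed ``rotation defect'' term. I would split it as $[v_2(x) - v_2(\tilde{x})]\sin(2\pi y) + v_2(\tilde{x})[\sin(2\pi y) - \sin(2\pi\tilde{y})]$. Since the rotation angle is $O(\eta)$, we have $|x - \tilde{x}| + |y - \tilde{y}| \leq C\eta$ throughout the region of interest, so the zeroth-order bound is immediate: both summands are $\leq (C/N)(C\eta)= C\eta/N$ using $|v_2|, |v_2'| \leq C/N$ for $\tilde{x} < 1$. The higher derivatives expand via the chain rule into many cross terms, but in each term the required smallness originates from one of three sources: a Taylor-difference factor $|v_2^{(j)}(x) - v_2^{(j)}(\tilde{x})| \leq C\eta/N$ (using $|v_2^{(j+1)}| \leq C/N$ via the ODE), a trigonometric difference $|\sin^{(k)}(2\pi y) - \sin^{(k)}(2\pi\tilde{y})| \leq C\eta$ paired with $|v_2^{(j)}(\tilde{x})| \leq C/N$, or an explicit off-diagonal Jacobian factor $\sin\theta = O(\eta)$; in every case the net contribution is $O(\eta/N)$, uniformly in the specified region.
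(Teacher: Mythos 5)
Your proposal is correct and follows essentially the same route as the paper: the same definition $\tilde{w}_2(\tilde{x}) = v_2(\tilde{x}) - v_2(\tilde{x}_l)$, the same decomposition of $\tilde{B}$ into $B(x,y) = v_1(x)\sin(\pi y) + E(x,y)$ plus the constant $v_2(\tilde{x}_l)\sin(2\pi\tilde{y})$ and the rotation-defect term, and the same use of the $O(\eta)$ rotation angle together with the $v_2$ bounds from Proposition \ref{bounds-prop} to control the defect and its first three derivatives. Your accounting of where the factor of $\eta$ arises in each chain-rule term is, if anything, slightly more explicit than the paper's.
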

The estimates in Lemma \ref{lem:rotation} and the properties of the rotation thus ensure that
\begin{align} \label{eqn:rotation1}
    \tilde{w}_2(\tilde{x}_l) = 0 \text{ for } \tilde{x}_l = \eta\tilde{\phi}_L(y_0), \qquad  \tilde{\phi}'_L(y_0) = 0.
\end{align}
Also, differentiating the equation $\tilde{v}(\eta\tilde{\phi}_{L}(\tilde{y}),\tilde{y}) = 0$ with respect to $\tilde{y}$, and using \eqref{eqn:rotation1}, the remainder $\tilde{B}(\tilde{x},\tilde{y})$ satisfies
\begin{align} \label{eqn:tildeB}
    \tilde{B}(\tilde{x}_l,y_0) = \pa_{{\tilde{y}}}\tilde{B}(\tilde{x}_l,y_0) = 0.
\end{align}
Using Lemma \ref{lem:rotation}, we can obtain estimates on the nodal line of $v$ up to the left boundary of $\Omega$.
\begin{prop} \label{prop:orthogonal}
Fix $(x_0,y_0)$ in the nodal set of $v$, with $x_0<1$, and apply the rotation as described above. There exists constants $\eta_0>0$ and $C_1>0$ such that for all $N>4$ and $0<\eta<\eta_0$, in the rotated coordinates with $\tilde{x}$ in a neighborhood of $x_0$, the nodal set of $v$ can be written as the graph $\tilde{y} = \tilde{h}(\tilde{x})$ in an open set $U$ of $(x_0,y_0)$, with
\begin{align*}
  |\tilde h (\tilde x) - \tfrac12 | \leq C_1 \eta, \qquad
  |\tilde{h}'(\tilde{x})| \leq C_1\eta|\tilde{x} - \tilde{x}_l|.
\end{align*}
In particular, this part of the nodal set meets the left boundary of $\Omega$ orthogonally.
\end{prop}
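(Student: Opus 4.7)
The plan is to establish the graph representation via the implicit function theorem and then track two estimates: the location of $\tilde{h}(\tilde{x})$ and its slope. Using Lemma \ref{lem:rotation}, I would write $\tilde{v}(\tilde{x},\tilde{y}) = \tilde{w}_2(\tilde{x})\sin(2\pi\tilde{y}) + \tilde{B}(\tilde{x},\tilde{y})$ with the vanishing conditions $\tilde{w}_2(\tilde{x}_l) = 0$ and $\tilde{B}(\tilde{x}_l,y_0) = \partial_{\tilde{y}}\tilde{B}(\tilde{x}_l,y_0) = 0$, together with $\tilde{\phi}'_L(y_0)=0$. The location estimates from Section \ref{sec:edge} (applied in the rotated frame, acceptable since the rotation angle is $O(\eta)$) imply $|y_0 - \tfrac{1}{2}| \leq C\eta$, so $\cos(2\pi y_0) \approx -1$ and $\sin(2\pi y_0) = O(\eta)$.

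First, I would apply the implicit function theorem to write the nodal set as $\tilde{y}=\tilde{h}(\tilde{x})$. Computing $\partial_{\tilde{y}}\tilde{v} = 2\pi\tilde{w}_2(\tilde{x})\cos(2\pi\tilde{y}) + \partial_{\tilde{y}}\tilde{B}$, Taylor expansion at $\tilde{x}_l$ gives $\tilde{w}_2(\tilde{x}) = \tilde{w}_2'(\tilde{x}_l)(\tilde{x}-\tilde{x}_l) + O((\tilde{x}-\tilde{x}_l)^2/N^2)$ with $|\tilde{w}_2'(\tilde{x}_l)| \sim 1/N$, while the vanishing $\partial_{\tilde{y}}\tilde{B}(\tilde{x}_l,y_0)=0$ forces $|\partial_{\tilde{y}}\tilde{B}(\tilde{x},\tilde{y})| \leq C(\eta/N)(|\tilde{x}-\tilde{x}_l| + |\tilde{y}-y_0|)$. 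Together these yield $|\partial_{\tilde{y}}\tilde{v}(\tilde{x},\tilde{h}(\tilde{x}))| \sim |\tilde{x}-\tilde{x}_l|/N$ in $U$, which gives the graph. For the bound $|\tilde{h}(\tilde{x}) - \tfrac{1}{2}| \leq C_1\eta$, I would use the nodal equation $\sin(2\pi\tilde{h}(\tilde{x})) = -\tilde{B}(\tilde{x},\tilde{h}(\tilde{x}))/\tilde{w}_2(\tilde{x})$. The Taylor expansion $\tilde{B}(\tilde{x},\tilde{y}) = \partial_{\tilde{x}}\tilde{B}(\tilde{x}_l,y_0)(\tilde{x}-\tilde{x}_l) + O(\eta/N)((\tilde{x}-\tilde{x}_l)^2 + (\tilde{y}-y_0)^2)$ divided by $\tilde{w}_2(\tilde{x}) \sim (\tilde{x}-\tilde{x}_l)/N$ produces $|\sin(2\pi\tilde{h})| \leq C\eta$, which translates to $|\tilde{h}-\tfrac{1}{2}| \leq C_1\eta$.

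The subtle estimate is the slope bound $|\tilde{h}'(\tilde{x})| \leq C_1\eta|\tilde{x}-\tilde{x}_l|$, from which orthogonality follows by taking $\tilde{x}\to\tilde{x}_l$. I would write $\tilde{h}'(\tilde{x}) = -\partial_{\tilde{x}}\tilde{v}/\partial_{\tilde{y}}\tilde{v}$ and Taylor expand the numerator around $(\tilde{x}_l,y_0)$. The critical ingredient is the extra cancellation at the base point: substituting the fixed nodal point $(x_0,y_0)$ into the second-order Taylor expansion of $\tilde{v}$ around $(\tilde{x}_l,y_0)$ (using $\tilde{v}(\tilde{x}_l,y_0) = 0$ and $\partial_{\tilde{y}}\tilde{v}(\tilde{x}_l,y_0) = 0$) and dividing by $(x_0 - \tilde{x}_l)$ yields $|\partial_{\tilde{x}}\tilde{v}(\tilde{x}_l,y_0)| \leq C\eta(x_0-\tilde{x}_l)/N$, far better than the naive $O(\eta/N)$. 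Moreover, differentiating the boundary equation $\tilde{v}(\eta\tilde{\phi}_L(\tilde{y}),\tilde{y})=0$ twice in $\tilde{y}$ at $\tilde{y}=y_0$ and using $\tilde{\phi}'_L(y_0)=0$ gives $\partial^2_{\tilde{y}\tilde{y}}\tilde{v}(\tilde{x}_l,y_0) = -\eta\tilde{\phi}''_L(y_0)\partial_{\tilde{x}}\tilde{v}(\tilde{x}_l,y_0)$, and the eigenfunction PDE at $(\tilde{x}_l,y_0)$ (where $\tilde{v}=0$) gives $\partial^2_{\tilde{x}\tilde{x}}\tilde{v}(\tilde{x}_l,y_0) = -\partial^2_{\tilde{y}\tilde{y}}\tilde{v}(\tilde{x}_l,y_0)$, so both pure second derivatives are $O(\eta^2(x_0-\tilde{x}_l)/N)$. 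Combining these cancellations with the Taylor expansion and $|\tilde{h}(\tilde{x})-y_0| \leq C\eta(\tilde{x}-\tilde{x}_l)$ (which can be bootstrapped from an initial $O(\eta)$ bound) shows the numerator is $O(\eta(\tilde{x}-\tilde{x}_l)^2/N)$; dividing by the denominator $\sim (\tilde{x}-\tilde{x}_l)/N$ yields the claimed slope bound.

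The main obstacle is orchestrating the three independent cancellations, from the nodal condition at $(x_0,y_0)$, the twice-differentiated Dirichlet condition, and the PDE, so that every contribution to $\partial_{\tilde{x}}\tilde{v}$ carries two factors of $(\tilde{x}-\tilde{x}_l)$. A naive bound gives $|\tilde{h}'| = O(\eta)$, which is not sufficient to conclude orthogonality as a limiting tangent direction, and extracting the extra factor of $(\tilde{x}-\tilde{x}_l)$ is where the Dirichlet boundary structure (specifically $\tilde{\phi}'_L(y_0)=0$) genuinely enters. Once the orthogonality is proven on the curved left boundary, the remaining three intersection points of Theorem \ref{thm:regularity} iii) lie on the three flat sides of $\partial\Omega$ and are strictly easier: there the closest-point condition becomes trivial and the same Taylor argument applies without needing to track the rotation or the curvature terms $\tilde{\phi}''_L$.
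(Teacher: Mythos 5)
Your proposal is correct and follows essentially the same route as the paper: the decomposition from Lemma \ref{lem:rotation}, the lower bound $|\partial_{\tilde y}\tilde v|\geq c|\tilde x-\tilde x_l|/N$ on the nodal set, and the same three cancellations (the vanishing of $\tilde v$ at both $(\tilde x_l,y_0)$ and $(x_0,y_0)$, the twice-differentiated Dirichlet condition with $\tilde\phi_L'(y_0)=0$, and the eigenfunction equation) used to extract the second factor of $(\tilde x-\tilde x_l)$ from $\partial_{\tilde x}\tilde v$. The only organizational difference is that the paper Taylor-expands $w_2$, $B$, and $\partial_x B$ separately about $\tilde x_l$ and observes the exact cancellation of the $w_2''(\tilde x_l)$ terms after substituting $\sin(2\pi y_0)=-B/w_2$, whereas you work with $\tilde v$ restricted to the line $\tilde y=y_0$; for this to close, be sure to evaluate at the base nodal point itself (so the cross term $(\tilde y-y_0)\partial_{\tilde x}\partial_{\tilde y}\tilde v$, which would otherwise cost a power of $(\tilde x-\tilde x_l)$, is absent) and to feed the improved bound $|\partial^2_{\tilde x}\tilde v(\tilde x_l,y_0)|\leq C\eta^2(x_0-\tilde x_l)/N$ back into the Taylor expansion of $\tilde v(\cdot,y_0)$ so as to upgrade $\partial_{\tilde x}\tilde v(\tilde x_l,y_0)$ itself to $O(\eta(x_0-\tilde x_l)^2/N)$ --- the preliminary bound $O(\eta(x_0-\tilde x_l)/N)$ on that single term is one power short of what the final estimate requires.
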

\begin{proof1}{Proposition \ref{prop:orthogonal}}
For ease of notation, when proving this proposition we will now drop the tildes. Throughout, we also take $\eta>0$ sufficiently small so that Propositions \ref{prop:horizontal}, \ref{prop:vertical} and Lemma \ref{lem:rotation} hold. 

The pointwise bound on $ | h (x) - \tfrac12 |$ follows from the expression in Lemma \ref{lem:rotation}: Using $w_2(x_l) = 0$ and the bound on $w_2'$ in Lemma \ref{lem:rotation},  we have $|w_2(x_0)|\sim |x_0-x_l|N^{-1}$. Moreover, since $B(x_l,y_0)=0$, we have $|B(x_0,y_0)|\leq C\eta N^{-1}|x_0-x_l|$. Therefore,
\begin{align*}
    |v(x_0,y_0)| \geq |w_2(x_0)||\sin(2\pi y_0)|-|B(x_0,y_0)| \geq cN^{-1}|x_0-x_l||\sin(2\pi y_0)|- C\eta N^{-1}|x_0-x_l|,
\end{align*}
which is strictly positive for $|y_0-\tfrac{1}{2}|>C\eta$ for $C$ sufficiently large.

To bound $|\tilde{h}'(\tilde{x})|$, we first note that from the above estimate on $|y_0-\tfrac{1}{2}|$ that $|\cos(2\pi y_0)|\geq\tfrac{1}{2}$.   We proceed to obtain a lower bound on $|\pa_y v(x_0,y_0)|$.  Using \eqref{eqn:tildeB},
\begin{align*}
    \pa_yB(x_0,y_0) = (x_0-x_l)\pa_{x}\pa_{y}B(\xi,y_0),
\end{align*}
for some $\xi\in(x_l,x_0)$. Therefore,
\begin{align*}
    |\pa_yv(x_0,y_0)| = \left|2\pi w_2(x_0)\cos(2\pi y)+\pa_yB(x_0,y_0)\right| & \geq c N^{-1}|x_0-x_{l}| - \left|(x_0-x_l)\pa_{x}\pa_{y}B(\xi,y_0)\right| \\& \geq cN^{-1}|x_0-x_l| -C\eta N^{-1}|x_0-x_l|.
\end{align*}
Here $c>0$ and $C$ are constants, so for $\eta>0$ sufficiently small we obtain
\begin{align} \label{eqn:orth1}
    |\pa_yv(x_0,y_0)| \geq cN^{-1}|x_0-x_l|.
\end{align}
This proves the existence of the graph function $y=h(x)$. 
To complete the proof of the proposition, we therefore need to obtain the upper bound on $|\pa_xv(x_0,y_0)|$ of
\begin{align*}
    |\pa_xv(x_0,y_0)| \leq C\eta N^{-1}|x_0-x_l|^2.
\end{align*}
We first use the fact that $v(x_0, y_0) = w_2(x_0) \sin(2\pi y_0) + B(x_0,y_0) = 0$, and solve for $\sin(2\pi y_0)$, so that we can write 
\begin{equation} \label{no-sin-partial-v}
\partial_x v(x_0, y_0) = w_2(x_0)\sin(2\pi y_0) + \pa_xB(x_0,y_0) = -B(x_0, y_0) \frac{w_2'(x_0)}{w_2(x_0)} + \partial_x B(x_0, y_0).
\end{equation}
Taylor expanding $w_2(x_0)$ gives
\begin{align*}
    w_2(x_0) = (x_0 - x_l) w_2'(x_l) + \tfrac{1}{2}(x_0 - x_l)^2 w_2''(x_l) + \tfrac{1}{6}(x_0 - x_l)^3 w_2'''(x_1)
\end{align*}
for some $x_1 \in (x_l, x_0)$. Solving this equation for $w_2'(x_l)$, we can write $w_2'(x_0)$ as
\begin{align} \nonumber
   w_2'(x_0) &= w_2'(x_l) + (x_0 - x_l) w_2''(x_l)+\tfrac{1}{2}(x_0-x_l)^2w_2'''(x_2) \\ \label{eqn:orth2}
&=  \frac{w_2(x_0)}{(x_0 - x_l)} + \tfrac{1}{2} (x_0 - x_l) w_2''(x_l) - \tfrac{1}{6}(x_0 - x_l)^2 w'''(x_1) + \tfrac{1}{2}(x_0 - x_l)^2 w''(x_2) 
\end{align}
for some $x_2\in(x_l,x_0)$. Next, we write the Taylor expansions of $B(x_0, y_0)$ and $\partial_x B(x_0, y_0)$ about $x_l$ to different orders, giving
\begin{align} \label{B-expansion}
B(x_0, y_0) & = (x_0 - x_l)\partial_x B(x_l,y_0) + \tfrac{1}{2} (x_0 - x_l)^2 \partial_x^2 B(x_l,y_0) + \tfrac{1}{6} (x_0 - x_l)^3 \partial_x^3 B(x_3,y_0),\\ \label{partial-B-expansion}
\partial_x B(x_0,y_0) &= \partial_x B(x_l,y_0) + (x_0 - x_l)\partial_x^2  B(x_l,y_0) + \tfrac{1}{2} (x_0 - x_l)^2 \partial_x^2 B(x_4,y_0)
\end{align}
for some $x_3, x_4 \in (x_l, x_0)$. Substituting \eqref{eqn:orth2}, \eqref{B-expansion}, \eqref{partial-B-expansion} into \eqref{no-sin-partial-v}, we obtain the expression for $\pa_xv(x_0,y_0)$ of
\begin{align} \nonumber
\partial_x v(x_0, y_0) = &-\partial_x B(x_l,y_0) - \tfrac{1}{2} (x_0 - x_l) \partial_x^2 B(x_l,y_0) - \tfrac{1}{6}(x_0 - x_l)^2 \partial_x^3 B(x_3,y_0)\\ \nonumber
&\, + \frac{B(x_0, y_0)}{w_2(x_0)}\left[-\tfrac{1}{2} (x_0 - x_l) w_2''(x_l) + \tfrac{1}{6}(x_0 - x_l)^2 w_2'''(x_1) - \tfrac{1}{2}(x_0 - x_l)^2 w_2'''(x_2) \right]\\ \nonumber
&\, + \partial_x B(x_l,y_0) + (x_0 - x_l)\partial_x^2  B(x_l,y_0) + \tfrac{1}{2} (x_0 - x_l)^2 \partial_x^2 B(x_4,y_0)\\ \nonumber
= \,&\,\tfrac{1}{2} (x_0 - x_l) \partial_x^2 B(x_l,y_0)   + (x_0 - x_l)^2\left[ \tfrac{1}{2} \partial_x^2 B(x_4,y_0) - \tfrac{1}{6} \partial_x^3 B(x_3,y_0)\right]\\ \label{eqn:orth3}
& +  \frac{B(x_0, y_0)}{w_2(x_0)}\left[-\tfrac{1}{2} (x_0 - x_l) w_2''(x_l) + \tfrac{1}{6}(x_0 - x_l)^2 w_2'''(x_1) - \tfrac{1}{2}(x_0 - x_l)^2 w_2'''(x_2) \right].
\end{align}
Since $B(x_l,y_0) = 0$, Lemma \ref{lem:rotation} ensures that
\begin{align*}
    \left|\frac{B(x_0,y_0)}{w_2(x_0)}\right| \leq \frac{C|x_0-x_l|\eta N^{-1}}{c|x_0-x_l|N^{-1}}.
\end{align*}
Therefore, from \eqref{eqn:orth3} together with the estimates on the derivatives of $B$ from Lemma \ref{lem:rotation}, we have
\begin{align} \label{eqn:orth4}
\left|\pa_{x}v(x_0,y_0)\right| \leq C\eta N^{-1} |x_0-x_l|.
\end{align}
In order to extract a factor of $(x_0-x_l)^2$ from \eqref{eqn:orth3}, we use the eigenfunction equation $\Delta v + \mu v=0$ to obtain a new expression for $\pa_x^2B(x_l,y_0)$. Since $v(x_l,y_0) = 0$, We can write
\begin{align*}
    \partial_x^2 B(x_l, y_0) = \partial_x^2 v(x_l, y_0) - w_2''(x_l) \sin(2\pi y_0) & = -\partial_y^2 v(x_l, y_0) - \mu v(x_l, y_0) - w_2''(x_l) \sin(2\pi y_0)\\
&= -\partial_y^2 v(x_l, y_0) -  w_2''(x_l) \sin(2\pi y_0)\\
&= -\eta\partial_x v(x_l, y_0) \phi''_L(y_0) - w_2''(x_l) \sin(2\pi y_0),
\end{align*}
where the final line comes from differentiating $v(\eta\phi_L(y_0), y_0) \equiv 0$ twice and using $\eta \phi_L(y_0) = x_l$, $\phi_L'(y_0) = 0$. Inserting this into \eqref{eqn:orth3}, and using $\sin(2\pi y_0) = -B(x_0,y_0)/w_2(x_0)$, gives
\begin{align} \nonumber
    \pa_xv(x_0,y_0) = \,&\,\tfrac{1}{2}\eta (x_0 - x_l)\pa_xv(x_l,y_0)\phi_L''(y_0) +\tfrac{1}{2}(x_0-x_l)w_2''(x_l)\frac{B(x_0,y_0)}{w_2(x_0)}    \\ \label{eqn:orth5}
    & + (x_0 - x_l)^2\left[ \tfrac{1}{2} \partial_x^2 B(x_4,y_0) - \tfrac{1}{6} \partial_x^3 B(x_3,y_0)\right]\\ \nonumber
& +  \frac{B(x_0, y_0)}{w_2(x_0)}\left[-\tfrac{1}{2} (x_0 - x_l) w_2''(x_l) + \tfrac{1}{6}(x_0 - x_l)^2 w_2'''(x_1) - \tfrac{1}{2}(x_0 - x_l)^2 w_2'''(x_2) \right]. 
\end{align}
Notice that the terms containing the factor of $w_2''(x_l)$ cancel. Moreover, by \eqref{eqn:orth4},
\begin{align*}
    |\pa_xv(x_l,y_0)| = |\pa_xv(x_0,y_0) + (x_l-x_0)\pa_x^2v(\xi,y_0)| \leq C N^{-1}|x_l-x_0|,
\end{align*}
where $\xi$ is in $(x_l,x_0)$. Therefore, \eqref{eqn:orth5} implies that $|\pa_xv(x_0,y_0)| \leq C\eta N^{-1}(x_l-x_0)^2$. Combining this with \eqref{eqn:orth1} establishes the estimates in the proposition at $(x_0,y_0)$, and the  bounds obtained above also hold at all points on the nodal set sufficiently close to $(x_0,y_0)$. 
\end{proof1}

\begin{Rem} \label{rem:RHS}
Since the right boundary of $\Omega$ is vertical, an identical proof to that of Proposition \ref{prop:orthogonal}, without the need for rotated coordinates, ensures that the nodal set of $v$ can be written as the graph $y=h(x)$, with $|h'(x)| \leq C\eta|N-x|$ for $x>N-1$.
\end{Rem}
To finish the proof of Theorem \ref{thm:regularity}, we are left to prove the orthogonality of the nodal set at the top and bottom boundaries of $\Omega$.
\begin{prop} \label{prop:orthogonal2}
There exist constants $\eta_0>0$ and $C_2>0$ such that for all $N>4$ and $0<\eta<\eta_0$, the nodal set of $v$ with $y<\tfrac{1}{10}$, can be written as the graph $x=g(y)$ with
\begin{align*}
    |g'(y)| \leq C_2\eta|y|.
\end{align*}
In particular, this part of the nodal set meets the bottom boundary of $\Omega$ orthogonally. The analogous statement is true for the part of the nodal set with $y>\tfrac{9}{10}$.
\end{prop}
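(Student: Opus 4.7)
The plan is to adapt the strategy of Proposition \ref{prop:orthogonal}, with the simplification that no rotation is needed because the bottom boundary is flat, and to instead Taylor-expand $v$ in $y$ around $y=0$. By Proposition \ref{prop:vertical}, any nodal point $(x_0,y_0)$ with $y_0<1/10$ already lies in region b), so $1<x_0<N-1$ and $|x_0-\tfrac{N}{2}|\leq C\eta$, and the nodal set is known to be a graph $x=g(y)$ in a neighborhood. The goal is therefore to refine the existing bound $|g'(y_0)|\leq C\eta$ to $|g'(y_0)|\leq C_2\eta|y_0|$, extracting the extra factor $|y_0|$ that encodes orthogonality at $y=0$.

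The key observation is that $v(x,0)\equiv 0$ forces $\pa_x^j v(x,0)=0$ for all $j\geq 0$, and evaluating the eigenfunction equation at $y=0$ then gives $\pa_y^2 v(x,0)=0$; iterating yields $\pa_y^{2k}v(x,0)=0$ for all $k$. Hence the Taylor expansion contains only odd powers of $y$,
\begin{align*}
v(x,y) = y f_1(x) + \tfrac{y^3}{6} f_3(x) + O(y^5), \qquad \pa_x v(x,y) = y f_1'(x) + O(y^3),
\end{align*}
where $f_1(x) = \pa_y v(x,0)$ and $f_3(x) = -(f_1''(x) + \mu f_1(x))$, the latter coming from $\pa_y^3 v(x,0) = -\pa_x^2\pa_y v(x,0) - \mu \pa_y v(x,0)$. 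Using the decomposition $v = v_1\sin(\pi y) + v_2\sin(2\pi y) + E$, one computes $f_1(x) = \pi v_1(x) + 2\pi v_2(x) + \pa_y E(x,0)$. At the nodal point, the location $|x_0-\tfrac{N}{2}|\leq C\eta$ together with $v_2(x^*)=0$ at some $|x^*-\tfrac{N}{2}|\leq C\eta$ from Proposition \ref{bounds-prop} yields $|v_2(x_0)|\leq C\eta/N$. Combining this with $|v_1|\leq C\eta/N$, the ODEs $v_1''=-\mu_1^2 v_1$ and $v_2''=(4\pi^2-\mu) v_2$ with $|4\pi^2-\mu|\sim N^{-2}$, and the exponentially small bound $|\nabla^j E(x_0,y)|\leq C\eta e^{-cN}$ for $0\leq j\leq 3$ valid on the central strip $x_0\in[N/4,3N/4]$, we obtain
\begin{align*}
|f_1(x_0)|, \; |f_1''(x_0)|, \; |f_3(x_0)| \leq C\eta/N, \qquad |f_1'(x_0)| \sim N^{-1},
\end{align*}
where the last estimate uses that $v_2'(x_0)\approx (2\pi/N)\cos(\pi)$ dominates $\pi v_1'$ and the exponentially small $\pa_x\pa_y E$ contributions.

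To conclude, the nodal relation $v(x_0,y_0)=0$ yields $f_1(x_0) = -\tfrac{y_0^2}{6} f_3(x_0) + O(y_0^4)$, and substituting into $\pa_y v(x_0,y_0) = f_1(x_0) + \tfrac{y_0^2}{2} f_3(x_0) + O(y_0^4)$ produces the cancellation $|\pa_y v(x_0,y_0)|\leq C\eta y_0^2/N$, while the leading term of $\pa_x v(x_0,y_0) = y_0 f_1'(x_0) + O(y_0^3)$ gives $|\pa_x v(x_0,y_0)|\geq c y_0/N$ for $\eta>0$ sufficiently small. Implicit differentiation $g'(y_0) = -\pa_y v/\pa_x v$ then produces $|g'(y_0)|\leq C_2\eta|y_0|$, and the part of the nodal set with $y>9/10$ is handled by the identical argument after the change of variables $y\mapsto 1-y$. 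The main subtlety is that Proposition \ref{bounds-prop} does not directly provide second- or third-derivative control on $E$ near $y=0$; this is resolved by the fact that the relevant nodal points are confined to the central strip $x_0\in[N/4,3N/4]$, where the uniform exponential bound on $\nabla^j E$ is available.
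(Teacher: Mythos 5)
Your proposal is correct in substance and rests on the same mechanism as the paper's proof: the identity $v(x,0)\equiv 0$ plus the eigenfunction equation kills $\pa_y^2 v(x,0)$, the nodal relation then forces a cancellation that makes $|\pa_y v(x_0,y_0)|\lesssim \eta y_0^2/N$, and this is divided by the lower bound $|\pa_x v(x_0,y_0)|\gtrsim |y_0|/N$ (which is exactly \eqref{eqn:graph3}). The difference is organizational, and it has a real cost in derivative counting. The paper keeps the decomposition $v=v_2(x)\sin(2\pi y)+B(x,y)$ and eliminates $v_2(x_0)$ exactly via $v_2(x_0)=-B(x_0,y_0)/\sin(2\pi y_0)$, so only $B=v_1\sin(\pi y)+E$ is Taylor expanded (to order $3$ for $B$ and order $2$ for $\pa_y B$), which stays within the $j\leq 3$ bounds of \eqref{eqn:B-bounds2}. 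You instead Taylor expand the full eigenfunction in $y$; your Lagrange remainders ($O(y^5)$ for $v$, $O(y^4)$ for $\pa_y v$, $O(y^3)$ for $\pa_x v$) require pointwise control of $\pa_y^4 v$ and $\pa_y^5 v$ (and $\pa_x^3\pa_y E$ for $f_3'$) near $(x_0,0)$, i.e.\ fourth and fifth derivatives of $E$, which Proposition \ref{bounds-prop} does not supply. This is fixable --- in the strip $x\in[N/4,3N/4]$ one has $E=\sum_{k\geq3}v_k(x)\sin(k\pi y)$ with $|v_k(x)|\leq C|v_k(0)|e^{-ck\min\{x,N-x\}}$, so each additional derivative costs a factor of $k$ that is absorbed by the exponential and $|\nabla^j E|\leq C\eta e^{-cN}$ for every fixed $j$ --- but you should say so explicitly, and your closing ``subtlety'' paragraph points at the wrong derivatives: the second and third derivative bounds you worry about are already covered by \eqref{eqn:B-bounds2} via the central strip, whereas the genuinely missing ingredient is the fourth/fifth order control just described. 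One should also check that these remainder terms carry the needed smallness: e.g.\ $\tfrac{y_0^3}{6}|\pa_y^4 v(x_0,c)|\leq C(\eta y_0^4/N+\eta y_0^3 e^{-cN})\leq C\eta y_0^2/N$, using $|v_1(x_0)|,|v_2(x_0)|\leq C\eta/N$ and the exponential bound on $\pa_y^4E$; this works but is part of the argument, not a footnote.
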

\begin{proof1}{Proposition \ref{prop:orthogonal2}}
We fix $(x_0,y_0)$ in the nodal set of $v$ with $y_0<\tfrac{1}{10}$, and we will see that an identical argument works for $y_0>\tfrac{9}{10}$. By Proposition \ref{prop:vertical}, $x_0=g(y_0)$ satisfies $\left|x_0-\tfrac{N}{2}\right|\leq C\eta$, and $|g'(y_0)| \leq C\eta$. In order to prove this proposition, we will use the same lower bound on $\pa_xv(x_0,y_0)$ from \eqref{eqn:graph3} of
\begin{align} \label{eqn:graph6}
    |\pa_x v(x_0,y_0)| \geq \tfrac{1}{2}c|y_0|/N.
\end{align}
However, we need to sharpen the estimate on $\pa_yv(x_0,y_0)$ that we obtained from the expression in \eqref{eqn:graph4},
\begin{align} \label{eqn:graph7}
   \pa_yv(x_0,y_0) = -2\pi\frac{\cos(2\pi y_0)}{\sin(2\pi y_0)}B(x_0,y_0) + \pa_yB(x_0,y_0),
\end{align}
so that it contains a factor of $y_0^2$. We now expand $B(x_0,y_0)$ to its Taylor series of order $3$ about $y_0=0$, and $\pa_yB(x_0,y_0)$ to order $2$. Note that from the eigenfunction equation
\begin{align*}
    \pa_y^2B(x_0,0) = \pa_y^2v(x_0,0) = -\pa_x^2v(x_0,0) - \mu v(x_0,0) = 0.
\end{align*}
Therefore,
\begin{align*}
  -2\pi\frac{\cos(2\pi y_0)}{\sin(2\pi y_0)}B(x_0,y_0) & =   -2\pi\frac{\cos(2\pi y_0)}{\sin(2\pi y_0)}\left(y_0 \pa_yB(x_0,y_0) + \tfrac{1}{6}y_0^3\pa_y^3B(x_0,c_1) \right), \\
  \pa_yB(x_0,y_0) & = \pa_yB(x_0,0) + \tfrac{1}{2}y_0^2\pa_y^2B(x_0,c_2),
\end{align*}
for some $c_1,c_2\in[0,y_0]$. Inserting these expressions in \eqref{eqn:graph7}, and using the bounds on the derivatives of $B$ from \eqref{eqn:B-bounds2}, together with $\left|2\pi \frac{\cos(2\pi y_0)}{\sin(2\pi y_0)} - \frac{1}{y_0}\right| \leq C|y_0|$, we obtain
\begin{align*}
    |\pa_yv(x_0,y_0)| \leq Cy_0^2\eta/N.
\end{align*}
Combining this with \eqref{eqn:graph6} completes the proof of the proposition.
\end{proof1}
We are left to prove Lemma \ref{lem:rotation}.
\begin{proof1}{Lemma \ref{lem:rotation}}
 We define the function  $\tilde{w}_2(\tilde{x})$ from $v_2(x)$ and the rotated coordinates $(\tilde{x},\tilde{y})$ by
\begin{align*}
     \tilde{w}_2(\tilde{x})  =  {v}_2(\tilde{x}) - {v}_2(\tilde{x}_l),
\end{align*}
where $\tilde{x}_{l} =\eta\phi_L(y_0)$. Since the left boundary function $x = \eta \phi_L(y)$ has first derivative bounded by $\eta$, the angle of rotation is bounded by $C\eta$. Therefore, by the properties of $v_2$ from Proposition \ref{bounds-prop} \eqref{v2-prop}, we have ${v}_2(\tilde{x}_l)\leq C\eta/N$ and the function $\tilde{w}_2$ satisfies the bounds listed in the statement of the lemma. Moreover, using the earlier decomposition of $v(x,y) = v_2(x)\sin(2\pi y) + B(x,y)$ from \eqref{eqn:B-decom}, we see that the error $\tilde{B}(\tilde{x},\tilde{y})$ satisfies
\begin{align} \nonumber
    \tilde{B}(\tilde{x},\tilde{y}) & = B(x,y) + v_2(x)\sin(2\pi y) - \tilde{w}_2(\tilde{x})\sin(2\pi \tilde{y}) \\ \label{eqn:rotate1}
    & = B(x,y) + v_2(x)\left(\sin(2\pi y) - \sin(2\pi \tilde{y})\right) + (v_2(x)-v_2(\tilde{x})+{v}_2(\tilde{x}_l))\sin(2\pi\tilde{y}).
\end{align}
For $\tilde{x}<1$, and since the angle of rotation is bounded by $C\eta$, we have $|y-\tilde{y}|,|x-\tilde{x}| < C\eta$. Therefore, using $|v_2(t)| + |v_2'(t)| \leq CN^{-1}$ for all $t\leq 1$, which follows from Proposition \ref{bounds-prop} \eqref{v2-prop},  together with $|\tilde{v}_2(\tilde{x}_l)| \leq C\eta/N$, this ensures that
\begin{align*}
    |\tilde{B}(\tilde{x},\tilde{y}) - B(x,y)| \leq C\eta/N.
\end{align*}
By differentiating \eqref{eqn:rotate1} up to $3$ times, we obtain the same inequality for the derivatives of $\tilde{B}(\tilde{x},\tilde{y})$. The lemma therefore follows from the bounds on $B(x,y)$ in \eqref{eqn:B-bounds1} and \eqref{eqn:B-bounds2}.
\end{proof1}

\section{Proof of Propositions \ref{bounds-prop} and \ref{hadamard-prop}} \label{sec:ansatz}

In this section, we prove Propositions \ref{bounds-prop} and  \ref{hadamard-prop}, using the properties of the domain $\Omega$ and the eigenfunction decomposition. Recall from \eqref{ansatz} and \eqref{mode-inner-product} that for $x\in[0,N]$ we write the eigenfunction $v$ as
\begin{align*}
v(x,y) = \sum_{k \geq 1} v_k(x) \sin(k\pi y) =v_1(x)\sin(\pi y) + v_2(x)\sin(2\pi y) + E(x,y)
\end{align*}
where
\begin{align*}
v_k(x) = 2\int_0^1 v(x,y) \sin(k\pi y).
\end{align*}
To prove Proposition \ref{bounds-prop}, we will write down the ODEs that the Fourier modes $v_k(x)$ satisfy and solve these equations in terms of their values at $x=0$. A similar strategy has been used to prove Proposition 2.1 in \cite{BCM20} which established some analogous properties for a Fourier decomposition of the second eigenfunction. In order to prove Proposition \ref{bounds-prop}, we will need to bound the eigenvalue $\mu$ and the Fourier modes $v_k(0)$. The bound on $\mu$ will follow from the domain monotonicity of Dirichlet eigenvalues in a straightforward manner. The required bound on $v_k(0)$ is $|v_k(0)| \leq C\eta/N$, which from \eqref{mode-inner-product}, will follow from a pointwise estimate on $v(0,y)$ of size $\eta/N$. For the first or second eigenfunction of $\Omega$, such a bound follows from using a comparison function argument (see, for example, Lemma 4.2 in \cite{BCM20}). However, because the nodal set of the eigenfunction $v$ intersects the left boundary, such an argument does not work in this case. Instead, we will use an argument based on a Hadamard variation calculation. A similar calculation has been used in \cite{grieser2009asymptotics} and \cite{BCM20}, but in order to prove $|v_k(0)|\leq C\eta/N$, here we need a sharper dependence on the error in terms of $N$ (see Lemma \ref{lem:vk} below). This proposition will also isolate the main contribution to $v_k(0)$ and prove Proposition \ref{hadamard-prop}. Throughout this section, we fix a value of $N$, with $N>4$, and $\eta$ will be smaller than an absolute constant $\eta_0>0$, which will be chosen in the course of the proofs below.

We start with the bound on the eigenvalue $\mu$.
\begin{lemma} \label{lem:eigenvalue}
The eigenvalue $\mu$, corresponding to the eigenfunction $v$, satisfies
\begin{align} \label{eigenvalue-bound}
4\pi^2\left({\tfrac{1}{(N+\eta)^2} + 1}\right)\leq \mu \leq 4\pi^2\left({\tfrac{1}{N^2} + 1}\right) .
\end{align}
\end{lemma}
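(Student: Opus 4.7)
The plan is to squeeze $\Omega$ between two rectangles whose Dirichlet spectra we know explicitly, then invoke domain monotonicity of Dirichlet eigenvalues together with the definition of $v$. Since the left boundary satisfies $-1 \leq \phi_L(y) \leq 0$ and $\eta>0$, the perturbation $x = \eta\phi_L(y)$ takes values in $[-\eta,0]$, so we have the nested inclusions
\begin{equation*}
[0,N]\times[0,1] \;\subseteq\; \Omega \;\subseteq\; [-\eta,N]\times[0,1].
\end{equation*}
Domain monotonicity of Dirichlet eigenvalues then gives, for every index $k$,
\begin{equation*}
\mu_k\bigl([-\eta,N]\times[0,1]\bigr) \;\leq\; \mu_k(\Omega) \;\leq\; \mu_k\bigl([0,N]\times[0,1]\bigr).
\end{equation*}

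Next, I would fix the spectral position of $\mu$. Let $k_0$ denote the index with $\mu_{k_0}\bigl([0,N]\times[0,1]\bigr) = \lambda_{2,2}$. By construction, $v(x,y;0) = \varphi_{2,2}(x,y)$ is the $k_0$-th Dirichlet eigenfunction of the unperturbed rectangle. Under Assumption \ref{assum:simple}, the eigenvalue $\lambda_{2,2}$ is simple, so standard perturbation theory for simple eigenvalues guarantees that $\mu = \mu(\eta)$ remains the $k_0$-th Dirichlet eigenvalue of $\Omega$ for all $\eta>0$ sufficiently small (eigenvalue branches cannot cross in this regime). Combining this identification with the upper inequality from domain monotonicity yields $\mu \leq \mu_{k_0}\bigl([0,N]\times[0,1]\bigr) = 4\pi^2\bigl(\tfrac{1}{N^2}+1\bigr)$ immediately.

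For the lower bound, I would translate the enclosing rectangle to $[0,N+\eta]\times[0,1]$, whose Dirichlet spectrum is $\{\pi^2(n^2/(N+\eta)^2 + m^2)\}_{n,m\geq 1}$. The remaining task is to show that $\mu_{k_0}$ of this rectangle is still realised by the $(n,m)=(2,2)$ mode, equal to $4\pi^2\bigl(\tfrac{1}{(N+\eta)^2}+1\bigr)$. Each eigenvalue $\pi^2(n^2/L^2+m^2)$ depends continuously on the length $L$, and Assumption \ref{assum:simple} states that at $L=N$ the value $\lambda_{2,2}$ is separated from all $\lambda_{k,1}$, and a fortiori from all other pairs $(n,m)$ with $m\geq 2$ (which contribute an additional $3\pi^2$ or more). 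So for $\eta$ small enough the ordering of the finitely many low-lying eigenvalues is preserved, which identifies $\mu_{k_0}\bigl([0,N+\eta]\times[0,1]\bigr) = 4\pi^2\bigl(\tfrac{1}{(N+\eta)^2}+1\bigr)$, giving the lower bound.

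The only genuine step beyond direct application of domain monotonicity is checking this ordering-preservation, but it is essentially a continuity/separation argument using Assumption \ref{assum:simple}. All other ingredients — containment of rectangles, explicit formulas for rectangular Dirichlet spectra, and smooth dependence of $\mu$ on $\eta$ via simplicity — are standard, so I do not expect any serious obstacle.
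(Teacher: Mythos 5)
Your proposal is correct and follows exactly the paper's own argument: sandwich $\Omega$ between the rectangles $[0,N]\times[0,1]$ and $[-\eta,N]\times[0,1]$ and apply domain monotonicity of Dirichlet eigenvalues. The paper states this in one line; you additionally spell out the (legitimate and needed) point that, by simplicity of $\lambda_{2,2}$ under Assumption \ref{assum:simple} and continuity of the rectangle spectrum in the side length, the $k_0$-th eigenvalue of the enlarged rectangle is still realised by the $(2,2)$ mode for $\eta$ small.
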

\begin{proof}
We observe that
\begin{align*}
[0,N] \times [0,1] \subset \Omega \subset [-\eta,N] \times [0,1]
\end{align*}
and apply domain monotonicity for Dirichlet eigenvalues to obtain the claim.
\end{proof}
From $(\Delta + \mu) v = 0$ and $v(x,y) = \sum_k v_k(x) \sin(k \pi y)$, we see that each $v_k(x)$ obeys
\begin{align}\label{ODE-equations}
v_k'' + (\mu -\pi^2 k^2) v_k = 0.
\end{align}
Moreover, since the right hand side of $\Omega$ is flat, $v_k(N) = 0$ for all $k\geq0$. Therefore, setting $\mu_k^2 = \pi^2k^2-\mu>0$ for $k\geq3$, and $\mu_1^2 = \mu-\pi^2>0$, $\mu_2^2 = \mu-4\pi^2>0$, we obtain the following expressions for $v_k(x)$:
\begin{align} \label{eqn:vk}
    v_k(x) & = \frac{1}{\sinh(\mu_k N)} v_k(0) \sinh\left(\mu_k(N-x)\right), \text{ for }k\geq3, \\ \label{eqn:v1}
    v_1(x) & = v_1(0) \cos (\mu_1 x) + A_1 \sin(\mu_1 x),
\text{ where } A_1 = \frac{ - v_1(0) \cos(\mu_1 N)}{\sin(\mu_1 N)}, \\ \label{eqn:v2}
    v_2(x) & = v_2(0) \cos (\mu_2 x) + A_2 \sin(\mu_2 x),
\text{ where } A_2 = \frac{- v_2(0) \cos(\mu_2 N)}{\sin(\mu_2 N)}.
\end{align}
This means that $|v_k(x)|\leq |v_k(0)|$ for $k\geq3$, and has exponential decay in $N$ away from $x=0$ and $x=N$. That is, for $k\geq3$,
\begin{align} \label{eqn:exponential}
    |v_k(x)| \leq C|v_k(0)| e^{-ck\min\{x,N-x\}},
\end{align}
for a constant $c>0$. By Lemma \ref{lem:eigenvalue}, $\mu_1$, $\mu_2$ satisfy
\begin{align} \label{eqn:mu1-bound}
\frac{4\pi^2}{(N + 2\eta)^2} + 3\pi^2 \leq  \, & \mu_1^2 \leq \frac{4\pi^2}{N^2} + 3\pi^2, \\ \label{eqn:mu2-bound}
\frac{4\pi^2}{(N + 2\eta)^2} \leq \, & \mu_2^2 \leq \frac{4\pi^2}{N^2}.
\end{align}
Therefore, for all $\eta>0$ sufficiently small, by \eqref{eqn:mu1-bound} together with the non-resonant assumption on $N$, Assumption \ref{assum:simple}, this ensures that $\mu_1N$ is bounded away from a multiple of $\pi$. From \eqref{eqn:v1}, this guarantees that $|v_1(x)|\leq C|v_1(0)|$. As a consequence of this, to prove Proposition \ref{bounds-prop}, we need to bound the coefficients $|v_k(0)|$. Since the boundary of $\Omega$ is $C^2-$smooth, except for four points where the side curves meet at convex angles, the gradient $|\nabla v(x,y)|$ is bounded. The definition of $\Omega$, then ensures that $|v(0,y)|\leq C\eta$, and so $|v_k(0)|\leq C\eta$ for all $k\geq1$. However, in order to prove Proposition \ref{bounds-prop}, we will require the stronger estimate, $|v_k(0)|\leq C\eta/N$.
\begin{lemma} \label{lem:v2}
There exist constants $\eta_0>0$ and $C>0$ such that for all $N>4$ and $0<\eta<\eta_0$, the bound $|v_2(0)| \leq C\eta/N$ holds.
\end{lemma}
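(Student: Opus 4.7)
The plan is to apply Green's second identity on the rectangle $R = [0,N]\times[0,1]\subset\Omega$, pairing $v$ with the test function $w(x,y) = \varphi_{2,2}(x,y) = \sin\bigl(\tfrac{2\pi}{N}x\bigr)\sin(2\pi y)$. Since $(\Delta + \mu)v = 0$ and $(\Delta + \lambda_{2,2})w = 0$, this yields
\[
(\mu - \lambda_{2,2})\int_R v\,w\,dA \;=\; \int_{\partial R}\bigl(v\,\partial_\nu w - w\,\partial_\nu v\bigr)\,dS.
\]
The point of choosing this particular $w$ is that the boundary integral will collapse, after massive cancellation, to a constant multiple of $v_2(0)$, producing an exact identity rather than an inequality.

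For the boundary analysis, I would note that the right side $x=N$, the top $y=1$, and the bottom $y=0$ of $R$ all coincide with smooth portions of $\partial\Omega$ (using $\phi_L(0)=\phi_L(1)=0$), so $v$ vanishes on each; the test function $w$ vanishes there as well. Hence only the left side $x=0$ contributes, and it sits strictly inside $\Omega$ since $\phi_L \le 0$. There $w(0,y) = 0$ and $\partial_x w(0,y) = \tfrac{2\pi}{N}\sin(2\pi y)$, and the outward normal for $R$ is $-\hat x$, so the surface term collapses to $-\tfrac{2\pi}{N}\int_0^1 v(0,y)\sin(2\pi y)\,dy = -\tfrac{\pi}{N}\,v_2(0)$ by the definition \eqref{mode-inner-product}. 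Rearranging produces the key identity
\[
v_2(0) \;=\; -\tfrac{N}{\pi}\,(\mu-\lambda_{2,2})\int_R v\,w\,dA.
\]

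The lemma then reduces to estimating the two factors separately. The eigenvalue difference is controlled by Lemma \ref{lem:eigenvalue}: expanding $4\pi^2\bigl((N+\eta)^{-2} - N^{-2}\bigr)$ gives the sharp bound $|\mu - \lambda_{2,2}| \le C\eta/N^3$. The bulk integral is estimated crudely using the normalization $\|v\|_\infty \le 1$ together with $\|w\|_{L^1(R)} = \tfrac{4N}{\pi^2}$, yielding $\bigl|\int_R v\,w\,dA\bigr|\le CN$. Combining these,
\[
|v_2(0)| \;\le\; \tfrac{N}{\pi}\cdot\tfrac{C\eta}{N^3}\cdot CN \;=\; \tfrac{C'\eta}{N},
\]
as required.

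There is no serious obstacle. The only point needing brief justification is that Green's identity applies legitimately on $R$: the three sides of $\partial R$ lying on $\partial\Omega$ share only the four rectangle corners with any other pieces of $\partial\Omega$, and $v$ is smooth in the interior of $\Omega$ and $H^1$ globally with Dirichlet data, so the identity follows from a standard approximation. The nontrivial observation driving the estimate is simply that the $N^{-3}$ factor in the eigenvalue bound is sharp enough to beat the $N$-growth of the bulk integral and deliver the claimed $\eta/N$ (rather than merely $\eta$) bound — this is precisely the ``sharper dependence on the error in terms of $N$'' alluded to in the introduction to this section.
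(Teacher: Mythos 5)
Your proof is correct, and it takes a genuinely different route from the paper's. The paper works directly with the explicit ODE solution $v_2(x) = v_2(0)\cos(\mu_2 x) + A_2\sin(\mu_2 x)$: the normalization $\|v\|_\infty = 1$ forces $|A_2|$ to be close to $1$, the eigenvalue bound of Lemma \ref{lem:eigenvalue} forces $|\mu_2 N - 2\pi| \leq C\eta/N$ and hence $|\sin(\mu_2 N)| \leq C\eta/N$, and the formula $A_2 = -v_2(0)\cos(\mu_2 N)/\sin(\mu_2 N)$ then yields $|v_2(0)| \leq C\eta/N$. Your Green's-identity argument on the inscribed rectangle $R=[0,N]\times[0,1]$, pairing $v$ against the unperturbed mode $\varphi_{2,2}$, instead produces the exact identity $v_2(0) = -\tfrac{N}{\pi}(\mu-\lambda_{2,2})\int_R v\,\varphi_{2,2}\,dA$; I checked the boundary bookkeeping (only the segment $x=0$ contributes, since $v$ and $\varphi_{2,2}$ both vanish on the other three sides and $\varphi_{2,2}(0,\cdot)=0$ kills the $w\,\partial_\nu v$ term) and the arithmetic $|\mu-\lambda_{2,2}|\leq C\eta/N^3$, $\|\varphi_{2,2}\|_{L^1(R)} = 4N/\pi^2$, and everything is right. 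Both proofs lean on the same two inputs — the $L^\infty$ normalization and the eigenvalue monotonicity of Lemma \ref{lem:eigenvalue} — but yours bypasses the Fourier-mode ODE formulas and the intermediate step $|A_2|\approx 1$ entirely, and is arguably cleaner and more robust (it would survive with $v$ replaced by any eigenfunction whose eigenvalue is within $O(\eta/N^3)$ of $\lambda_{2,2}$). What it does not give you, and what the paper's version sets up, is the structural information about $A_2$ and $\mu_2$ that gets reused immediately afterwards in the proofs of Lemma \ref{lem:vk} and Proposition \ref{bounds-prop}; so as a replacement for this one lemma your argument is a clean alternative, but the surrounding machinery would still need to be developed.
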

\begin{proof}
Since $|v_k(0)|\leq C\eta$ for all $k\geq1$, by the expressions in \eqref{eqn:vk}, \eqref{eqn:v1}, and \eqref{eqn:v2}, we have
\begin{align*}
    \left|v(x,y) - A_2\sin(\mu_2 x)\right| \leq C\eta
\end{align*}
for all $(x,y)\in\Omega$ with $x\in[1,N-1]$. Using $\max_{\Omega}|v| = 1$, this ensures that $||A_2|-1|\leq C\eta$. Moreover, from  \eqref{eqn:mu2-bound}, $\mu_2$ satisfies
\begin{align} \label{eqn:mu2-estimate}
    \left|\mu_2N- 2\pi\right| \leq C\eta/N.
\end{align}
Therefore, for $\eta>0$ sufficiently small $\cos(\mu_2N)$ is bounded from below by a positive constant, $\sin(\mu_2N)$ is bounded from above by $C\eta/N$, and so from the expression for $A_2$ in \eqref{eqn:v2}, we must have $|v_2(0)|\leq C\eta/N$.
\end{proof}

In order to use \eqref{eqn:vk}, \eqref{eqn:v1}, and \eqref{eqn:v2} to prove Proposition \ref{bounds-prop}, we need to prove the sharper estimate $|v_k(0)|\leq C\eta/N$ for all $k\geq 1$.
\begin{lemma} \label{lem:vk}
There exist constants $\eta_0>0$ and $C>0$ such that for all $N>4$ and $0<\eta<\eta_0$,
\begin{align*}
    \sum_{k\neq2}v_k(0)^2 \leq C\eta^2/N^2.
\end{align*}
\end{lemma}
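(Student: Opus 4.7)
By Parseval applied to the Fourier sine expansion on $[0,1]$, $\sum_k v_k(0)^2 = 2\int_0^1 v(0,y)^2\,dy$, so combined with Lemma \ref{lem:v2} the claim reduces to showing $\int_0^1 v(0,y)^2\,dy \le C\eta^2/N^2$. The plan is to carry out a Hadamard variation argument and carefully track the dependence on $N$. Write $v(0,y)$ via the boundary condition $v(\eta\phi_L(y),y)=0$ as
\[v(0,y) = \int_{\eta\phi_L(y)}^{0} \partial_x v(s,y)\,ds.\]
The leading contribution to $\partial_x v$ near the left boundary comes from the $m=2$ mode of the $y$-Fourier decomposition $v(x,y) = \sum_m v_m(x)\sin(m\pi y)$: Lemma \ref{lem:v2} bounds $|v_2(0)|\le C\eta/N$, and the ODE expression $v_2'(0)=A_2\mu_2$ together with $|A_2|\sim 1$ (forced by the normalization $\|v\|_\infty=1$) and $\mu_2\sim 1/N$ (from Lemma \ref{lem:eigenvalue}) give $|v_2'(0)|\le C/N$, which is the source of the crucial factor $1/N$.

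To make this quantitative, I would use the Fourier decomposition of $v$ on $R=[0,N]\times[0,1]$, extended to all of $\Omega$ via the ODE extensions of each $v_m$ and unique continuation. The Dirichlet condition on the left side of $\Omega$ then reads
\[\sum_m v_m(\eta\phi_L(y))\sin(m\pi y) = 0 \qquad \text{on } [0,1].\]
Taylor expanding $v_m$ about $x=0$ and using the ODE $v_m'' = (m^2\pi^2-\mu)v_m$ to substitute higher derivatives by multiples of $v_m(0)$, this rearranges to an identity of the form
\[v(0,y) = -\eta\phi_L(y)\,\partial_x v(0,y) + H(y;\eta),\]
with $H$ collecting second- and higher-order Taylor remainders. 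The principal term $-\eta\phi_L(y)v_2'(0)\sin(2\pi y)$ has $L^2$-norm $\le C\eta/N$ by the estimate on $v_2'(0)$, and its contribution to $\sum_{k\ne2} v_k(0)^2$ is bounded, via Parseval, by
\[\left(\tfrac{C\eta}{N}\right)^2 \cdot 2\int_0^1 \phi_L(y)^2 \sin^2(2\pi y)\,dy \le \frac{C\eta^2}{N^2}.\]

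The main obstacle is to control uniformly the contributions of the non-resonant modes $m\ne2$ in $\partial_x v(0,y)$ and of the remainder $H$. For these modes, $v_m'(0) = v_m(0)\,S_m'(0)$ with $|S_m'(0)|\le C$ for $m=1$ (by Assumption \ref{assum:simple}) and $|S_m'(0)|\sim m$ for $m\ge3$, so a naive $\ell^2$ estimate would diverge without further input. The resolution uses two ingredients: the uniform rough bound $|v_m(0)|\le C\eta$ (coming from $\|\nabla v\|_{L^\infty(\Omega)}\le C$ by interior elliptic regularity together with Taylor expansion of $v$ from the left boundary), which together with a bootstrap in the identity above improves to $|v_m(0)|\le C\eta/N$ for low modes; and the smoothness of the trace $v(0,\cdot)$ on the open interval $(0,1)$ (inherited from local elliptic regularity of $v$ away from the two corners of the left boundary), which gives super-polynomial decay of $v_m(0)$ for large $m$. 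A dyadic splitting of the sum at $m_0 \sim 1/\eta$ then handles both regimes, and the remainder $H$ is controlled in the same fashion via the ODE identity $v_m^{(j)}(0) = (m^2\pi^2-\mu)^{j/2} v_m(0)$ at each order. Assembling all the bounds gives $\int_0^1 v(0,y)^2\,dy \le C\eta^2/N^2$, which by Parseval and subtracting $v_2(0)^2$ completes the proof.
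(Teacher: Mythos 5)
Your overall strategy is the same as the paper's (a Hadamard-variation/Taylor expansion off the left boundary, isolating the $v_2'$ term as the source of the $1/N$ gain, then splitting low and high modes and bootstrapping), but two of the steps you lean on do not work as stated. First, the high-mode tail: smoothness of the trace $v(0,\cdot)$ on the \emph{open} interval $(0,1)$ gives no decay of its sine coefficients at all (e.g.\ $\sqrt{y(1-y)}$ is smooth on $(0,1)$ with coefficients decaying only like $m^{-3/2}$), and smoothness up to the endpoints fails because the line $x=0$ passes through the two corners of $\pa\Omega$, where $v$ has only finite regularity. More seriously, even granting rapid decay, constants that do not carry a factor of $\eta/N$ cannot produce the bound $\sum_{m>m_0}v_m(0)^2\leq C\eta^2/N^2$: with $m_0\sim 1/\eta$ you would only get a power of $\eta$, not of $\eta/N$. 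The paper's fix is a single integration by parts in $y$, giving $|v_k(0)|\leq Ck^{-1}(\eta/N+B)$ where $B^2=\tfrac12\sum_{k\neq2}v_k(0)^2$ is the unknown itself; the tail then contributes $C\eta^{1/2}(\eta/N+B)^2$, which is absorbed.

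Second, the low-mode bootstrap is not actually closed. In your identity $v(0,y)=-\eta\phi_L(y)\pa_xv(0,y)+H$, the term $\pa_xv(0,y)$ contains $\sum_{m\geq3}v_m'(0)\sin(m\pi y)$ with $|v_m'(0)|\sim m|v_m(0)|$, and feeding in only the rough bound $|v_m(0)|\leq C\eta$ the mode-mixing sums grow in $m$ rather than improving to $C\eta/N$; you need a mechanism that makes every error term \emph{quadratically small in the unknown}. The paper achieves this by first proving the a priori pointwise bound $|B(x,y)|\leq C(B+\eta/N)$ for $x\leq\tfrac1{10}$ via a comparison function and the generalized maximum principle, and then writing the variation formula as a boundary integral over $\pa\Omega_0$ so that every error term is of the form $C(\eta(B+\eta/N)+k^2\eta^2(B+\eta/N))$; summing over $k\leq\eta^{-1/2}$ and $k>\eta^{-1/2}$ yields $B^2\leq C\eta^2/N^2+C\eta^{1/2}(B+\eta/N)^2$, which rearranges to the claim for $\eta$ small. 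Without these two ingredients (the aggregate $L^2$ quantity $B$ with its pointwise control, and the $k^{-1}$ bound carrying the factor $\eta/N+B$), your outline does not assemble into a proof.
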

\begin{proof}
We will prove this lemma, using a Hadamard variation argument, using a variant of a calculation in \cite{grieser2009asymptotics} and \cite{BCM20}. For $0\leq x \leq N$, we define $B(x,y) = \sum_{k\neq 2}v_k(x)\sin(k\pi y)$, and so
\begin{align*}
    B^2:= \int_{0}^{1}B(0,y)^2\,dy = \sum_{k\neq2}v_k(0)^2\int_{0}^{1}\sin^2(k\pi y)\,dy = \tfrac{1}{2}\sum_{k\neq2}v_k(0)^2.
\end{align*}
Therefore, to prove the claim, it is sufficient to show that $B\leq C\eta/N$. We will first show this under the assumption that
\begin{align} \label{eqn:B-pointwise}
    |B(x,y)| \leq C(B+\eta/N)
\end{align}
holds for all $(x,y)\in\Omega$, with $x\leq 1$. An integration by parts calculation gives an expression for $v_k(0)$ of
\begin{align} \nonumber
    v_k(0) & = 2\int_{0}^{1}v(0,y)\sin(k\pi y)\,dy \\ \label{eqn:vk-1}
    & = -2\int_{\pa\Omega_0}\frac{\pa v}{\pa \nu}x\sin(k\pi y)d\sigma + 2(\mu-k^2\pi^2)\int_{\Omega_0}v(x,y)x\sin(k\pi y)\,dx\,dy.
\end{align}
Here $\Omega_0$ is the portion of $\Omega$ with $x\leq0$, and $d\sigma$ is the measure on $\pa\Omega_0$. Since $\Omega_0$ has area bounded by a multiple of $\eta$, the second integral in \eqref{eqn:vk-1} can be bounded by  $Ck^2\eta^2(B+\eta/N)$. For the first integral in \eqref{eqn:vk-1} we write
\begin{align} \label{eqn:vk-2}
   -2\int_{\pa\Omega_0}\frac{\pa v}{\pa \nu}x\sin(k\pi y)d\sigma = -2\int_{\pa\Omega_0}\frac{\pa (v_2(x)\sin(2\pi y))}{\pa \nu}x\sin(k\pi y)d\sigma -2\int_{\pa\Omega_0}\frac{\pa B}{\pa \nu}x\sin(k\pi y)d\sigma. 
\end{align}
Using elliptic estimates, together with the estimate in \eqref{eqn:B-pointwise}, the $L^2$-norm of $\frac{\pa B}{\pa \nu}$ on $y = \eta\phi_L(y)$ is bounded by $C(B+\eta/N)$. This means that the second integral on the right hand side of \eqref{eqn:vk-2} can be bounded by $C\eta(B+\eta/N)$. The unit normal on $x = \eta\phi_L(y)$ is given by
\begin{align*}
    \nu = (1+\eta^2\phi_L'(y)^2)^{-1/2}\langle -1,-\eta\phi_L'(y)\rangle,
\end{align*}
and since $|v_2'(x)|\leq CN^{-1}$, $|\phi_L(y)|+|\phi_L'(y)| \leq 1$, the first integral on the right hand side of \eqref{eqn:vk-2} can be written as
\begin{align} \label{eqn:vk-3}
   2\int_{0}^{1}v_2'(\eta\phi_L(y))\sin(2\pi y)\eta\phi_L(y)\sin(k\pi y) \, dy   +2\int_{0}^{1}2\pi\eta\phi_L'(y)(v_2(\eta\phi_L(y))\cos(2\pi y))\eta\phi_L(y)\sin(k\pi y) \, dy,
\end{align}
up to an error of size at most $C\eta^3/N$. As $|v_2(0)| \leq C\eta/N$, and $|v_2'(x)| \leq CN^{-1}$, we have $|v_2(\eta \phi_L(y))| \leq C\eta/N$, and so the second integral in \eqref{eqn:vk-3} can also be bounded by $C\eta^3/N$. Putting everything together,
\begin{align} \label{eqn:Hadamard}
v_k(0) =    2\int_{0}^{1}v_2'(\eta\phi_L(y))\sin(2\pi y)\eta\phi_L(y)\sin(k\pi y) \, dy  + \text{ Error},
\end{align}
where the Error can be bounded by
\begin{align*}
    C\left( \eta (B+\eta/N) + k^2\eta^2(B+\eta/N) \right).
\end{align*}
Integrating by parts in the integral in \eqref{eqn:Hadamard}, using $\sin(k\pi y) = -\tfrac{1}{k\pi}\tfrac{d}{dy}\left[\cos(k\pi y)\right]$, it can be bounded by $Ck^{-1}\eta/N$. 
 Putting everything together gives the estimate on $v_k(0)$ of
\begin{align} \label{eqn:vk-4}
|v_k(0)| \leq C\left(k^{-1}\eta/N +  \eta (B+\eta/N) + k^2\eta^2(B+\eta/N) \right).
\end{align}
Alternatively, we can write
\begin{align*}
    v_k(0) = 2\int_{0}^{1}v(0,y)\sin(k\pi y)\,dy & = -\frac{2}{k\pi}\int_{0}^{1}\pa_yv(0,y)\cos(k\pi y)\,dy \\
    & = -\frac{4}{k}\int_{0}^{1}v_2(0)\cos(2\pi y)\cos(k\pi y)\,dy -\frac{2}{k\pi}\int_{0}^{1}\pa_yB(0,y)\cos(k\pi y)\,dy.
\end{align*}
Again using a gradient estimate on $B(x,y)$, we therefore also have the bound 
\begin{align} \label{eqn:vk-5}
    |v_k(0)| \leq Ck^{-1}(\eta/N+B).
\end{align}
This implies that
\begin{align*}
    B^2 = \tfrac{1}{2}\sum_{k\neq2}v_k(0)^2 & =  \tfrac{1}{2}\sum_{k\neq2, k\leq \eta^{-1/2}}v_k(0)^2 + \tfrac{1}{2}\sum_{k> \eta^{-1/2}}v_k(0)^2 \\
    &\leq C\left(\eta^2/N^2 + \eta^{-1/2}\eta^2(B+\eta/N)^2 + \eta^{-5/2}\eta^4(B+\eta/N)^2 \right) + C\left(\eta^{1/2}(\eta/N + B)^2 \right),
\end{align*}
where we have used \eqref{eqn:vk-4} to estimate the first sum and \eqref{eqn:vk-5} to estimate the second sum. In particular, for $\eta>0$ sufficiently small (independently of $N$), we can rearrange this estimate to obtain the required estimate on $B$ of $B^2\leq C\eta^2/N^2$.

We are left to prove the estimate \eqref{eqn:B-pointwise}. The function $B(x,y)$ satisfies the eigenfunction equation
\begin{align*}
    0 = (\Delta + \mu)v(x,y) = (\Delta + \mu)(v_2(x)\sin(2\pi y)) = (\Delta + \mu)B(x,y).
\end{align*}
Moreover, by the ODE solutions for $v_k(x)$ from \eqref{eqn:vk} and \eqref{eqn:v1}, we certainly have
\begin{align*}
    |B(x,y)| \leq C\eta/N
\end{align*}
for all $(x,y)\in\Omega$, with $x\geq\tfrac{1}{10}$. Also, $B(x,y) = 0$ when $y = 0,1$, and  on the left boundary of $\Omega$, with $x = \eta\phi_L(y)$, we have
\begin{align*}
    |B(\eta\phi_L(y),y)| = |v_2(\eta\phi_L(y))\sin(2\pi y)| \leq |v_2(\eta\phi_L(y))| \leq |v_2(0)| + |A_2||\sin(\mu_2\eta\phi_L(y))| \leq C\eta/N,
\end{align*}
since $|\phi_L(y)|\leq 1$. Therefore, writing $\Omega_{1/10}$ for the portion of $\Omega$ with $x\leq\tfrac{1}{10}$, we have $(\Delta + \mu)B = 0$ in $\Omega_{1/10}$, and $|B(x,y)| \leq C\eta/N$ on $\pa\Omega_{1/10}$.
\\
\\
Define a comparison function $R(x,y)$ by
\begin{align*}
    R(x,y) = 100C\tfrac{\eta}{N}\cos\left(\tfrac{\pi}{4}(y-\tfrac{1}{2})\right)\cos(4\pi x).
\end{align*}
Then, $\Delta R = -(\tfrac{1}{16} + 16)\pi^2R$, so that $(\Delta + \mu)R<0$ in $\Omega_{1/10}$. Also $R>0$ in $\Omega_{1/10}$, and $R > C\eta/N\geq|B|$ on $\pa\Omega_{1/10}$. By the generalized maximum principle,
\begin{align*}
    |B(x,y)| \leq R(x,y) \leq 100C\eta/N \text{ in } \Omega_{1/10},
\end{align*}
and this completes the proof of the claim.
\end{proof}
We can now use the estimate $|v_k(0)| \leq C\eta/N$ together with the expressions for $v_k(x)$ from \eqref{eqn:vk}, \eqref{eqn:v1}, and \eqref{eqn:v2} to prove all of the properties listed in Proposition \ref{bounds-prop}.
\begin{proof1}{Proposition \ref{bounds-prop}}
By Lemma \ref{lem:vk}, $v_1(0)$ satisfies $|v_1(0)| \leq C\eta/N$. Moreover, Assumption \ref{assum:simple} ensures that the coefficient $A_1$ from \eqref{eqn:v1} satisfies $|A_1| \leq C|v_1(0)|$, for a constant $C$ depending only on the distance of $N$ from the values $N_k$ appearing in the assumption. Therefore, from the expression for $v_1(x)$ in \eqref{eqn:vk}, it and its derivatives are all bounded by $C\eta/N$. Evaluating the expression for $v_1(x)$ from \eqref{eqn:v1} at $x = \tfrac{N}{2}$ gives
\begin{align*}
v_1(\tfrac{N}{2}) = v_1(0)\left(\cos(\mu_1 \tfrac{N}{2})-\cot(\mu_1N)\sin(\mu_1\tfrac{N}{2})\right) = \frac{v_1(0)}{2\cos(\mu_1\tfrac{N}{2})}.
\end{align*}
Applying Assumption \ref{assum:simple} again thus ensures the existence of a constant $C$ such that
\begin{align*}
    C^{-1}|v_1(0)| \leq v_1(\tfrac{N}{2}) \leq C|v_1(0)|
\end{align*}
and this establishes the estimates in \eqref{v1-prop} in the proposition. 
\\
\\
From Lemma \ref{lem:vk}, together with the exponential decay of $v_k(x)$ for $k\geq3$ from \eqref{eqn:exponential}, we have
\begin{align*}
    \left|v(x)-v_2(x)\sin(2\pi y)\right| \leq C\eta/N
\end{align*}
for all $x\in[1,N-1]$. In particular, since $v$ has $L^{\infty}$-norm equal to $1$, and is normalised to be positive at $(\tfrac{N}{4},\tfrac{1}{4})$, for $\eta>0$ sufficiently small, $v_2(x)$ must be positive at $x=\tfrac{N}{4}$, with a maximum of at least $1-C\eta/N$. The expression for $v_2(x)$ from \eqref{eqn:v2} therefore implies that 
\begin{align*}
    \left|A_2 - 1\right| \leq C\eta/N. 
\end{align*}
The estimates on $v_2(x)$ and its derivatives therefore follow from the expression for $v_2(x)$ in \eqref{eqn:v2} and the estimates on $\mu_2$ in \eqref{eqn:mu2-bound}. The bound
\begin{align*}
   \left|v_2'(x) - \tfrac{2\pi}{N}\cos(\tfrac{2\pi}{N}x)\right| \leq C\eta N^{-2}
\end{align*}
in particular ensures that $|v_2'(x)| \geq C^{-1}N^{-1}$ on $[\tfrac{2N}{5}, \tfrac{3N}{5}]$. Therefore, the function $v_2(x)$ can have at most one zero on this interval. Further, $|v_2(x) - \sin(\tfrac{2\pi}{N} x)| \leq C \frac{\eta}{N}$ and, $\sin(\tfrac{2\pi}{N} x)$ vanishes at $x = \tfrac{N}{2}$. Since $\sin(\tfrac{2\pi}{N}x)$ has derivative bounded below by $C^{-1}N^{-1}$, this ensures that $v_2(x^*) = 0$ for some $x^* \in [\frac{N}{2} - C\eta, \frac{N}{2} + C\eta]$. This completes the proof of the estimates in \eqref{v2-prop} in the proposition.
\\
\\
Finally, Lemma \ref{lem:vk} together with \eqref{eqn:exponential}, implies that $E(x,y)$ is bounded in $L^2(\Omega)$ by $C\eta/N$, and that $|\nabla^j E(x,y)| \leq C\eta e^{-cN}$ for $x\in[\tfrac{1}{4}N,\tfrac{3}{4}N]$, $0\leq j \leq 3$. Since $E$ also satisfies the eigenfunction equation $\Delta E=-\mu E$, the $L^2(\Omega)$ bound on $E(x,y)$ ensures that 
\begin{align*}
    |E(x,y)| + |\nabla E(x,y)| \leq C\eta/N.
\end{align*}
in $\Omega$. For $\tfrac{1}{10} \leq y \leq \tfrac{9}{10}$, we are away from the corners of the domain, and so  as the left boundary $x=\eta\phi_L(y)$ is sufficiently smooth, we can pointwise bound the second and third derivatives of $E(x,y)$ by $C\eta/N$. This gives all the estimates on $E(x,y)$ from \eqref{E-prop} in the proposition, and completes the proof. 
\end{proof1}

We can also use the calculation in Lemma \ref{lem:vk} to prove Proposition \ref{hadamard-prop}. 

\begin{proof1}{Proposition \ref{hadamard-prop}}
By Proposition \ref{bounds-prop} \eqref{v2-prop},
\begin{align*}
    \left|v_2'(x) -\tfrac{2\pi}{N}\cos\left(\tfrac{2\pi}{N}x\right) \right| \leq C\eta N^{-2},
\end{align*}
and so in particular, $\left|v_2'(\eta\phi_L(y))-\tfrac{2\pi}{N}\right| \leq C\eta N^{-2}$. Using this estimate on $v_2'(\eta\phi_L(y))$ in \eqref{eqn:Hadamard}, we see that $v_k(0)$ is equal to a main contribution of
\begin{align*}
    \frac{4\pi\eta}{N}\int_{0}^{1} \phi_L(y)\sin(2\pi y)\sin(k\pi y)\,dy,
\end{align*}
up to an error of size bounded by
\begin{align*}
        C\left( \eta^2/N^3 + \eta (B+\eta/N) + k^2\eta^2(B+\eta/N) \right) \leq C\left(\eta^2/N + k^2\eta^3/N\right), 
\end{align*}
completing the proof of the proposition.
\end{proof1}

\section{Numerical Experiments and Discussion} \label{sec:numerics}

In this section we detail numerical experiments that support our findings and investigate the roles of Assumptions \ref{assum:simple} and \ref{assum:phi}. We consider a few different classes of boundary functions, namely sinusoids as well as some irregular boundary functions including hats and (non-smooth) bumps. To this last point, we relax the assumption that $\phi_L\in C^5([0,1]; \mathbb{R})$ and require it to only be (Lipschitz) continuous. We also loosen the perturbation bound and let $\phi_L(y) \in [-1, 1]$ rather than $[-1, 0]$. Consequently, we also allow for $\phi_L(0), \phi_L(1) \neq 0$.

We begin with a brief description of the numerical methods, and then explore three of our findings, namely that symmetry in the boundary perturbation can close the nodal set, that the nodal set switches orientation about the described resonant $N$, and that the nodal set gap distance is independent of $N$. The numerics support these findings.

\subsection{Description of Numerical Methods}
The analysis was performed using MATLAB, which provides tools for solving PDEs on user-defined domains. Domains were specified using \texttt{polyshape} in MATLAB, which accepts coordinate vectors as an argument and returns a polygon-geometry object. To encode the left boundary perturbation, the interval $[0,1]$ was discretized at steps of size $10^{-3}$, and the boundary functions were evaluated on this mesh. The resulting coordinate pairs were passed to \texttt{polyshape}, along with the other points $(N, 0)$ and $(N, 1)$ on the right side of the domain.

The sinusoidal boundary functions were chosen to be $\phi_L(y) = \cos(6 \pi y)$ and $\phi_L(y) =  \sin(6\pi y)$, with the former being even and the latter odd about $y = \tfrac{1}{2}$. The hat functions were specified using \texttt{triangularPulse}, and the bump functions were specified as crests of a sine wave. Both of these boundary types had finitely many (usually two) hats/bumps, placed at varying $y$.

Once the perturbed rectangular domain was specified, the Dirichlet problem for the Helmholtz equation was solved numerically. MATLAB offers a friendly user-interface for solving boundary value problems in its \texttt{PDE Toolbox}, but also offers a command-line interface for using it. Using this command-line interface, we first generate a finite-element mesh for the domain using \texttt{generateMesh} and solve the Helmholtz problem using \texttt{solvepdeeig}. The mesh resolution can be tuned via an argument to \texttt{generateMesh}, which is useful for ensuring a good rendering of the nodal set for larger domains.

For more details on the implementation, visit \href{https://github.com/marichig/nodal-set-openings}{https://github.com/marichig/nodal-set-openings}, which includes the code used for the following analyses as well as references to the relevant MATLAB documentation.

\subsection{Role of Assumption \ref{assum:phi} and Effect of Symmetric Perturbation on Nodal Set Gap} 
From Proposition \ref{hadamard-prop} and Assumption \ref{assum:phi}, we observe that choices of $\phi_L$ that are symmetric about $y = \tfrac{1}{2}$ would force $v_1(0)$, and consequently $v_1(x)$, to be small. In these cases, any perturbation to the nodal set would be due to the exponentially small error term in (\ref{ansatz}). In fact, the eigenfunction itself will be symmetric about $y = \tfrac{1}{2},$ and so the crossing is genuine, as shown in the plots in Figure \ref{fig:symmetry} (see further below).

Numerics also verify that for asymmetric choices of $\phi_L$ such that Assumption \ref{assum:phi} does not hold, the nodal set opening is due to higher-order error modes and is hence smaller. Figure \ref{fig:non-zero-bdry-integral} demonstrates this with the choice of $\phi_L(y) = \cos(5\pi y).$

\begin{figure}[ht] 
     \begin{subfigure}{0.4\textwidth}
         \centering
         \includegraphics[width=60mm,trim={1.5cm 1.5cm 1.5cm 1.5cm},clip]{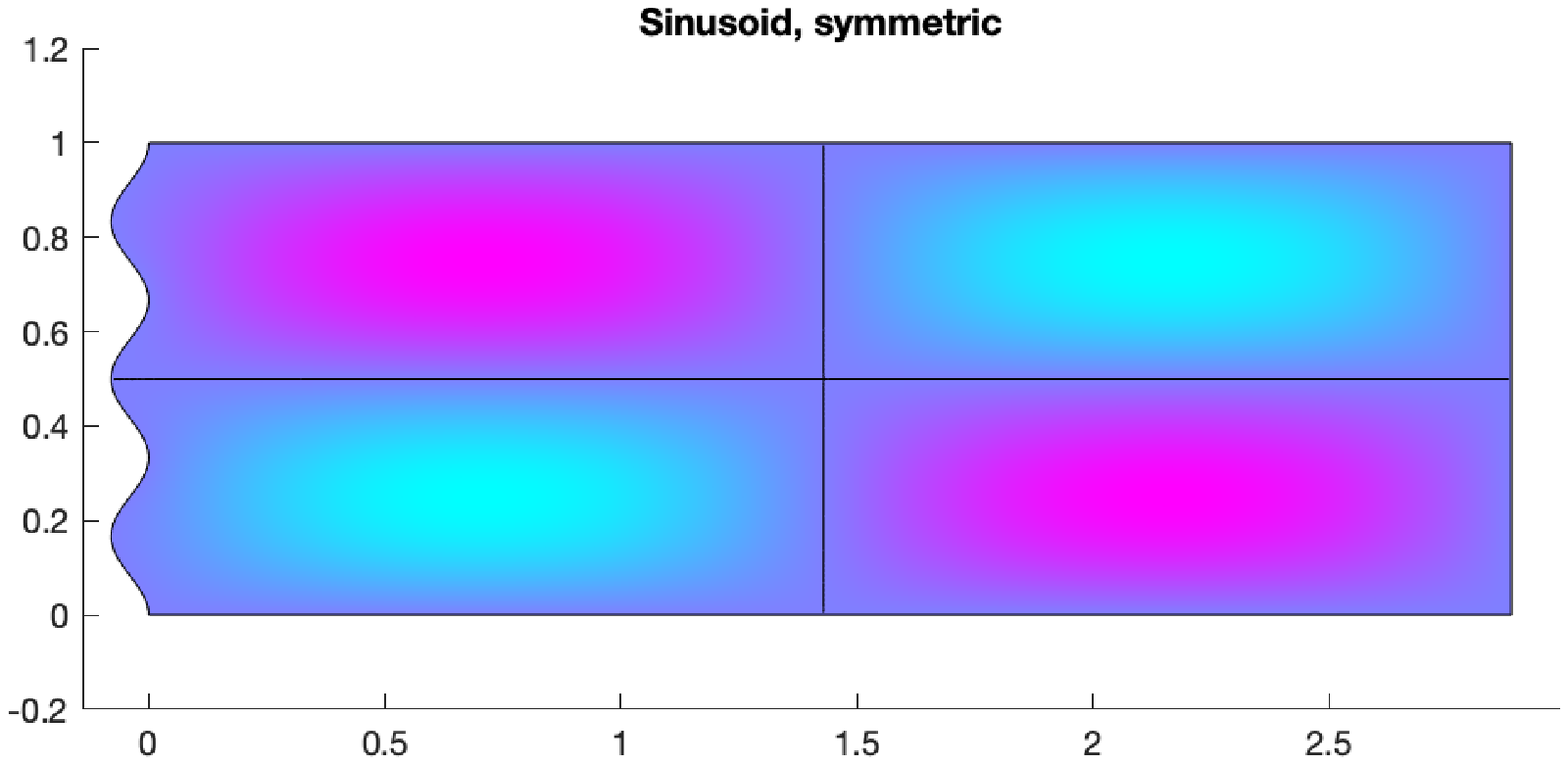}
         \caption{}
     \end{subfigure} \quad
       \begin{subfigure}{0.4\textwidth}
         \centering
         \includegraphics[width=60mm,trim={1.5cm 1.5cm 1.5cm 1.5cm},clip]{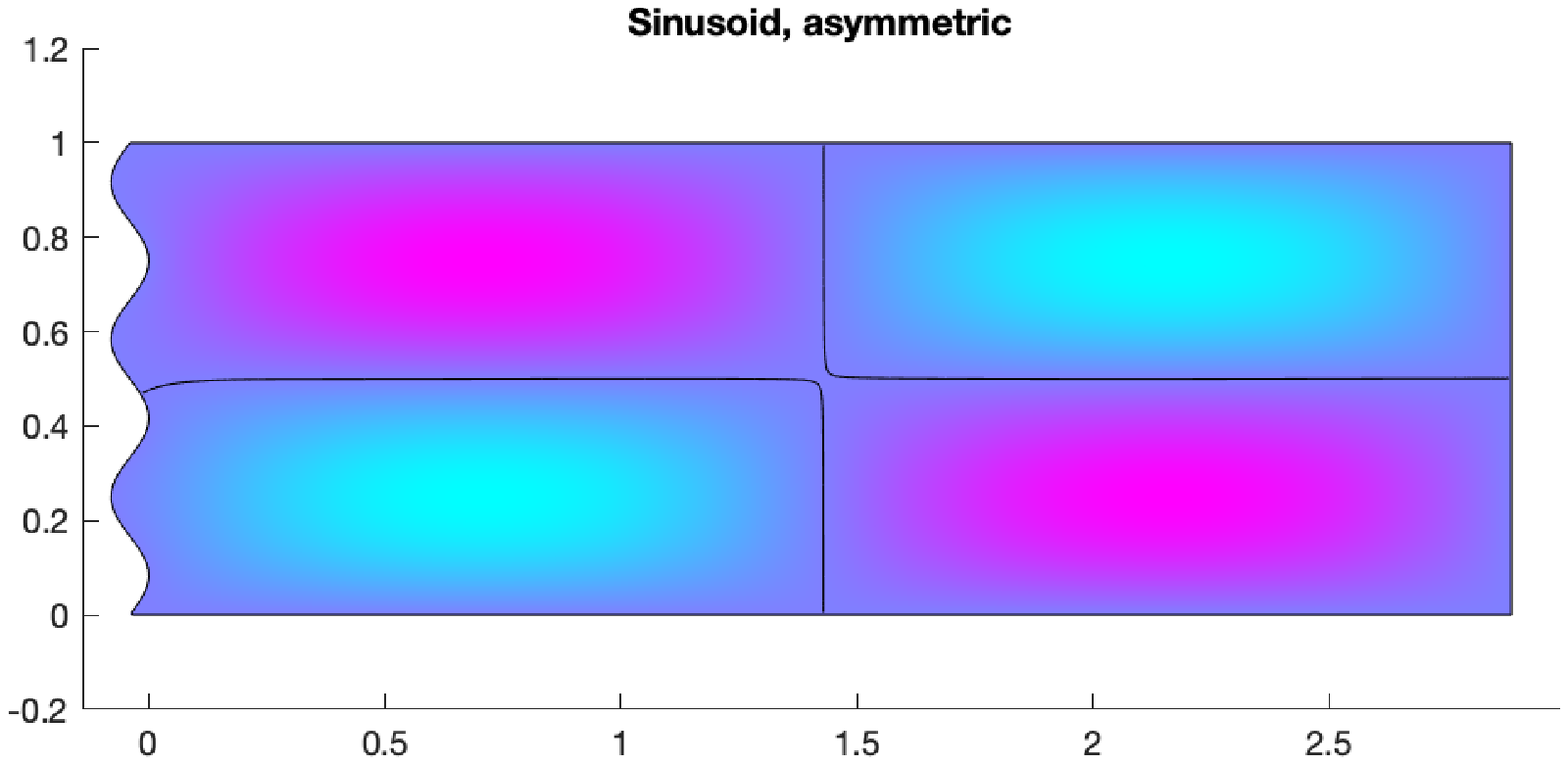}
         \caption{}
     \end{subfigure} \\
     \begin{subfigure}{0.4\textwidth}
         \centering
          \includegraphics[width=60mm,trim={1.5cm 1.5cm 1.5cm 1.5cm},clip]{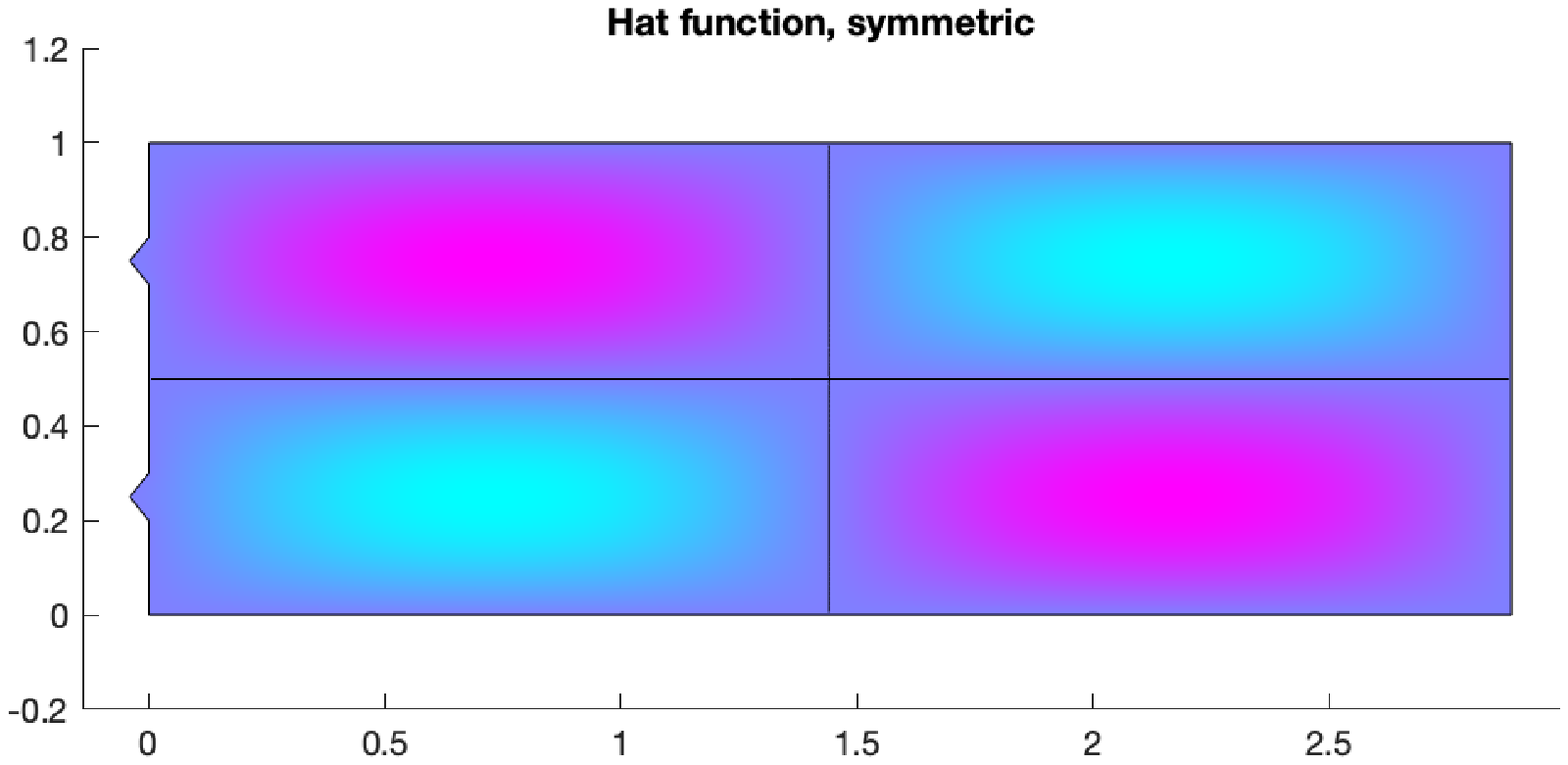}
          \caption{}
     \end{subfigure} \quad
       \begin{subfigure}{0.4\textwidth}
         \centering
          \includegraphics[width=60mm,trim={1.5cm 1.5cm 1.5cm 1.5cm},clip]{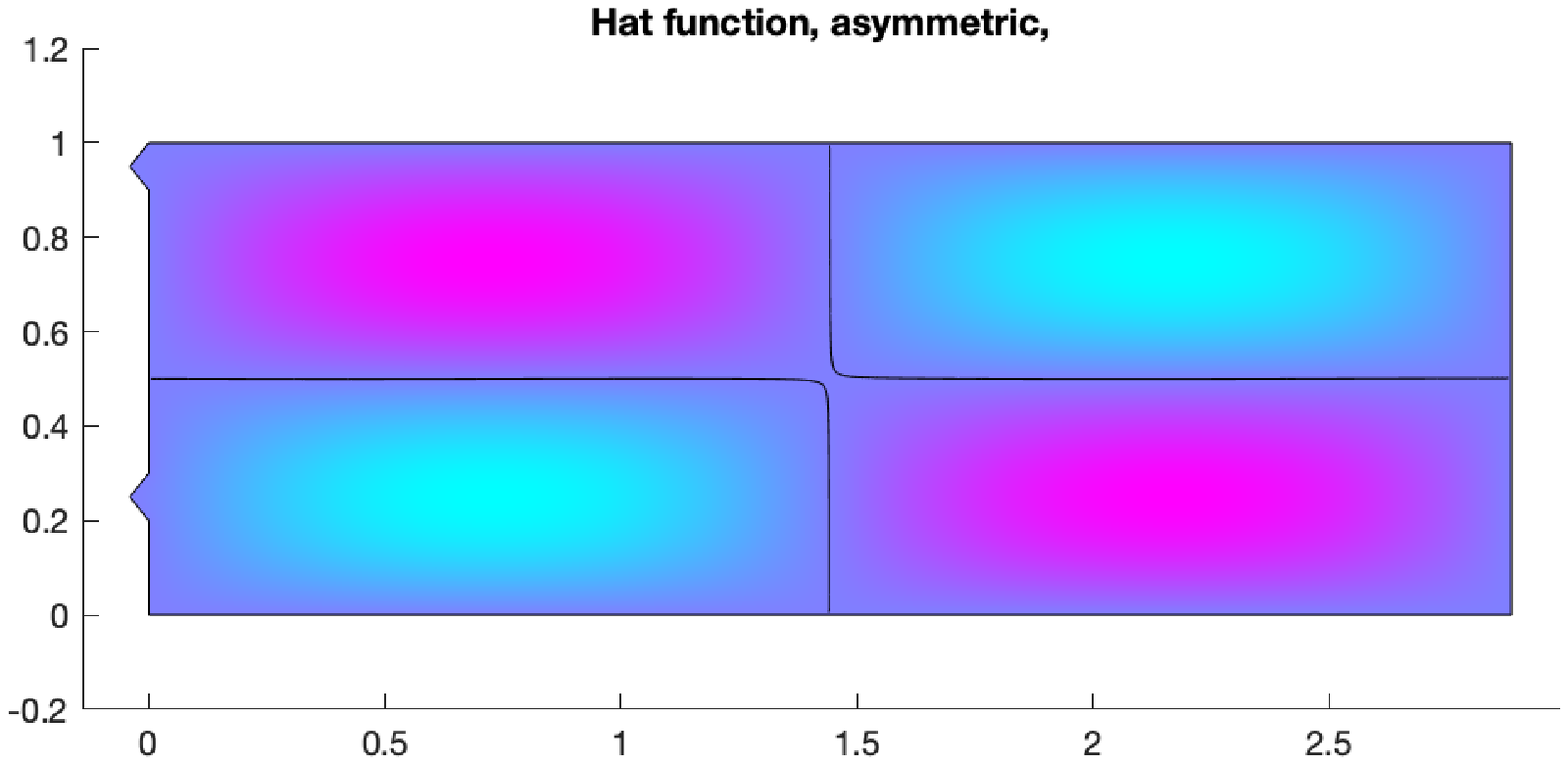} 
          \caption{}
     \end{subfigure} \\
     \begin{subfigure}{0.4\textwidth}
         \centering
          \includegraphics[width=60mm,trim={1.5cm 1.5cm 1.5cm 1.5cm},clip]{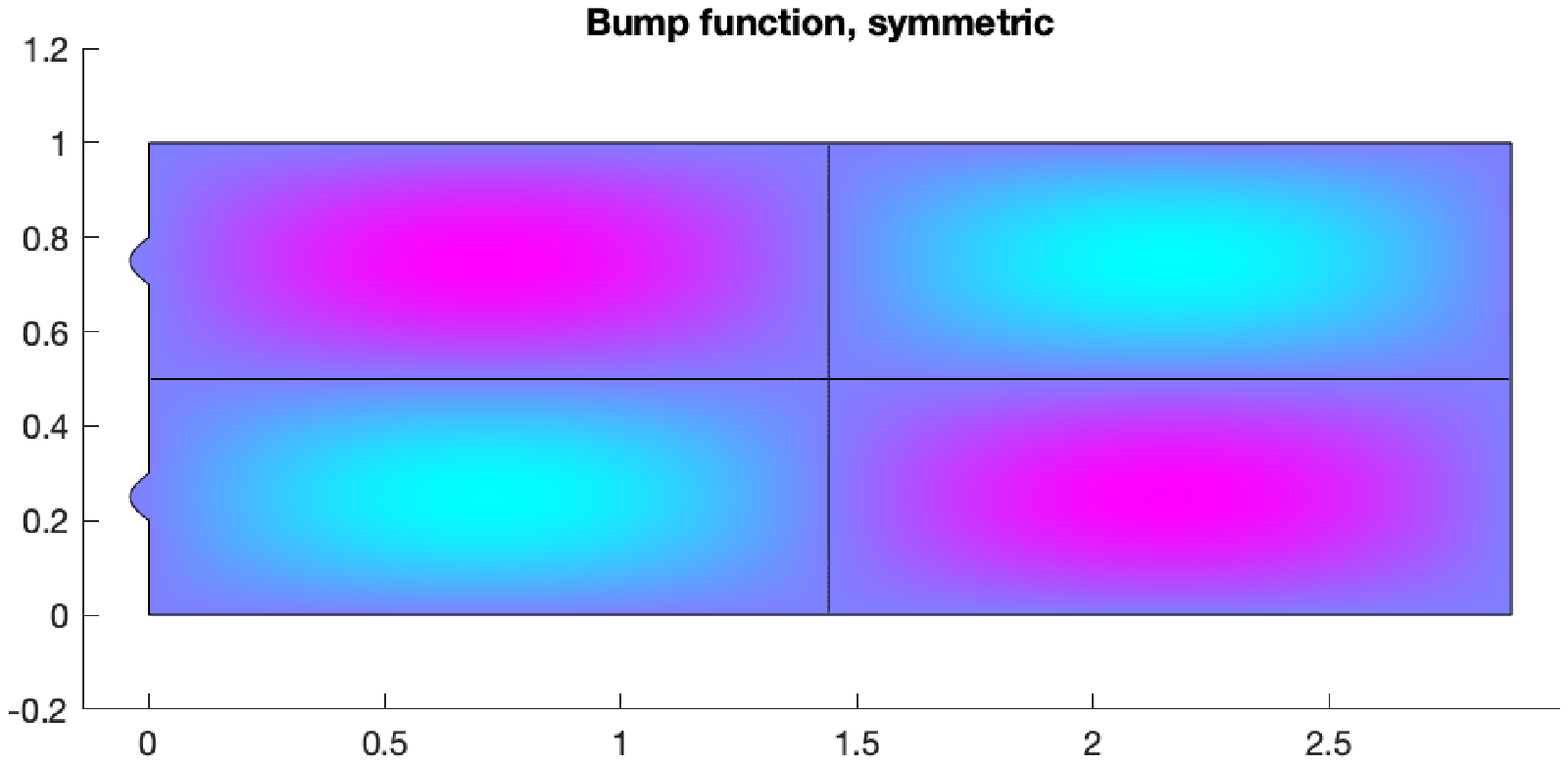}
          \caption{}
     \end{subfigure} \quad
       \begin{subfigure}{0.4\textwidth}
         \centering
          \includegraphics[width=60mm,trim={1.5cm 1.5cm 1.5cm 1.5cm},clip]{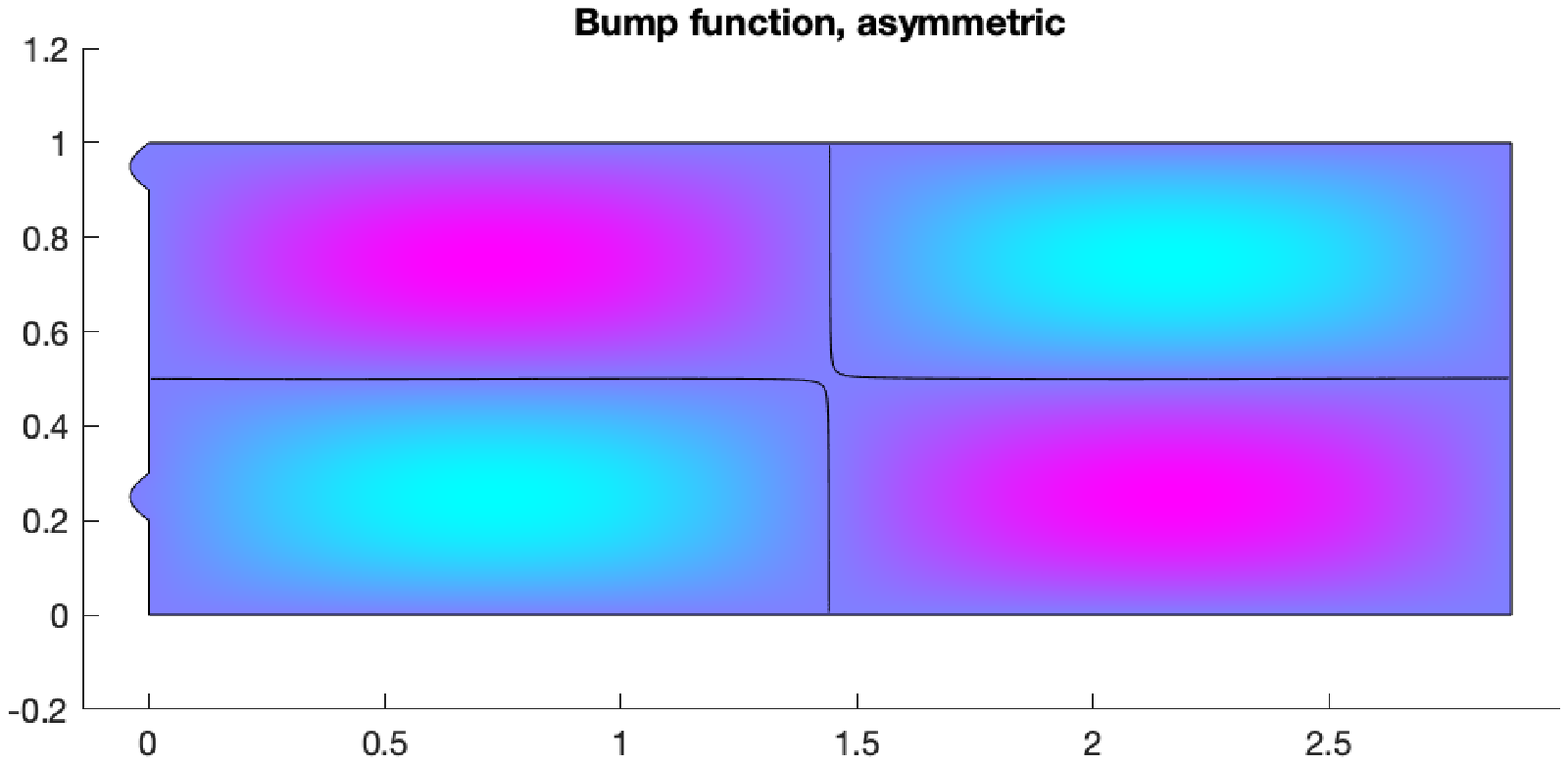} 
          \caption{}
     \end{subfigure}
\caption{Symmetric and asymmetric boundary perturbations, $N = 5/\sqrt{3}$. In (i), (ii), the left boundary is given by $0.04\phi_L(y)$, with $\phi_L(y) = \cos(6\pi y)$ in the symmetric case and $\phi_L(y) = \sin(6\pi y)$ in the asymmetric case. In (iii)-(vi), hats and bumps of radius $0.05$ and amplitude $0.04$, and are placed at $y = 0.25, 0.75$ in the symmetric boundary, and $y = 0.25, 0.95$ in the asymmetric.}
\label{fig:symmetry}
\end{figure}

\begin{figure}[ht] 
 \includegraphics[width=52mm,trim={2cm 2cm 2cm 2cm},clip]{figures/figure-1-symmetric.eps}  \!\!\!\!\!\!\!\! \includegraphics[width=40mm,trim={1.5cm 1.5cm 1.5cm 1.5cm},clip]{figures/figure-1-asymmetric.eps} \ 
  \includegraphics[width=40mm,trim={1.5cm 1.5cm 1.5cm 1.5cm},clip]{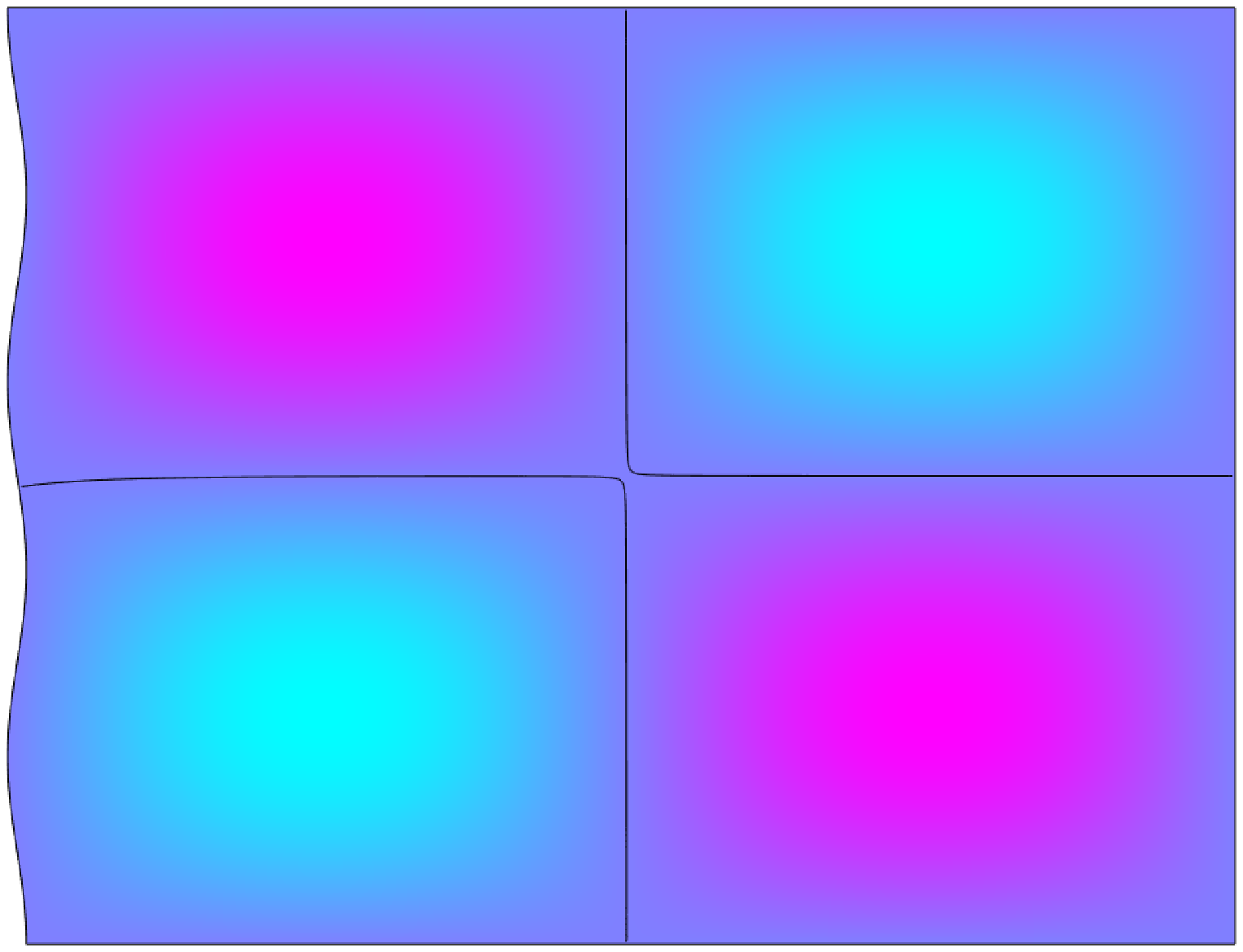}
\caption{Boundary perturbations of $\phi_L(y) = \cos(6\pi y)$ (left, symmetric), $\sin(6\pi y)$ (middle, asymmetric), and $\cos(5\pi y)$ (right, asymmetric). The boundary integral in Assumption \ref{assum:phi} vanishes for $\phi_L(y) = \cos(5\pi y),$ and hence the nodal set opening is much smaller.}
\label{fig:non-zero-bdry-integral}
\end{figure}

\subsection{Role of Assumption \ref{assum:simple} and Change in Orientation about Resonant $N$}
We recall that by Assumption \ref{assum:simple}, the aspect ratio $N$ is chosen to avoid a degenerate eigenvalue. Further, according to Proposition \ref{prop:hyperbola}, we see that the sign of $v_2'(\tfrac{N}{2})/v_1(\tfrac{N}{2})$ 
determines the orientation of the nodal set. In this section we investigate the behavior of the nodal set when Assumption \ref{assum:simple} does not hold and confirm the observation about the orientation of the nodal set.

To investigate this, we set $\eta = \tfrac{1}{32}$, $\phi_L(y) = \sin(6\pi y)$, and consider aspect ratios near the resonant value of close to $N =  \sqrt{15} \approx 3.8730$. For such $N$,  we have that $\min_{k}\left|3 + \tfrac{4}{N^2} - \tfrac{k^2}{N^2} \right| =0,$ where the minimum is attained for $k = 7.$ Near this aspect ratio, the nodal set deforms greatly (see Figure \ref{fig:resonant-N}).

The plots in Figure \ref{fig:resonant-N} also show that the nodal set switches orientation as the aspect ratio passes through a resonant value, where the axis along the opening changes from $[1, -1]^T$ to $[1,1]$. This corresponds to a sign change of $v_1(\tfrac{N}{2}) = v_1(0)/\left(2\cos\left(\mu_1 \tfrac{N}{2}\right)\right)$, while $v_2'(\tfrac{N}{2})$ remains negative.

\begin{figure} 
\begin{tabular}{cc}
\includegraphics[width=60mm,trim={2.0cm 2.5cm 2.5cm 2.5cm},clip]{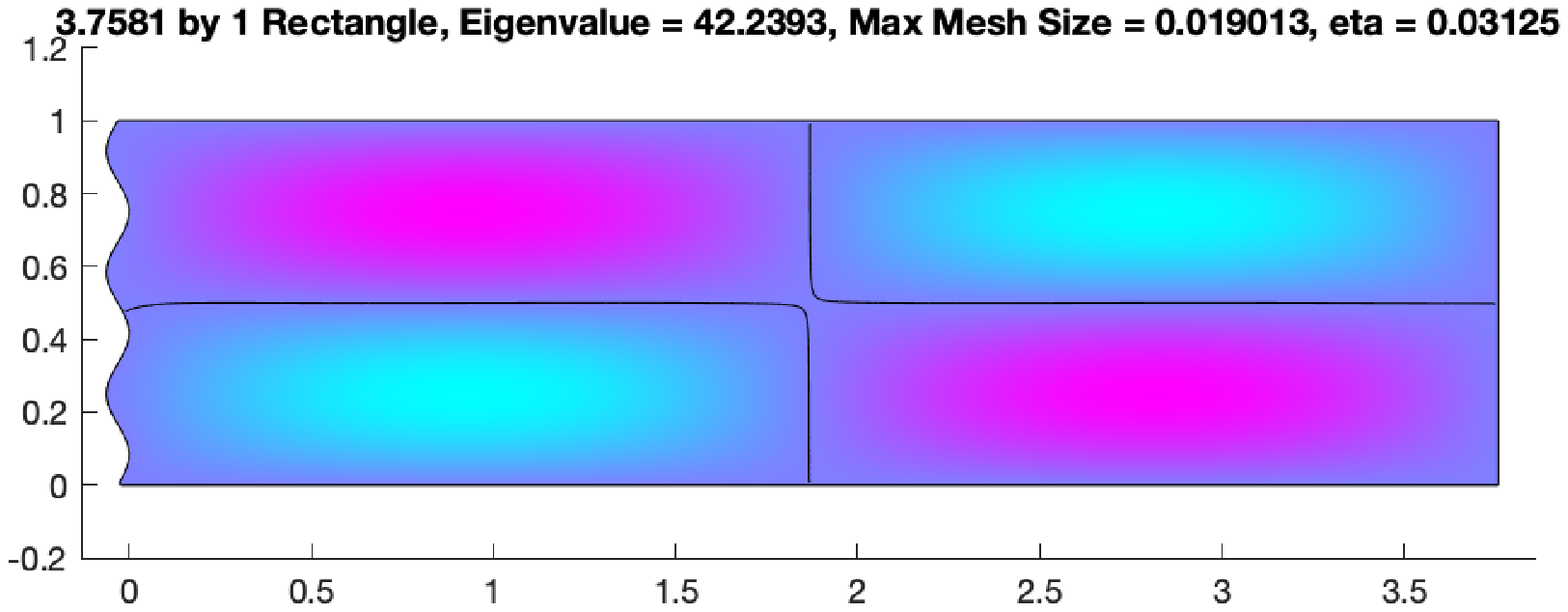} &
\includegraphics[width=60mm,trim={2.0cm 2.5cm 2.5cm 2.5cm},clip]{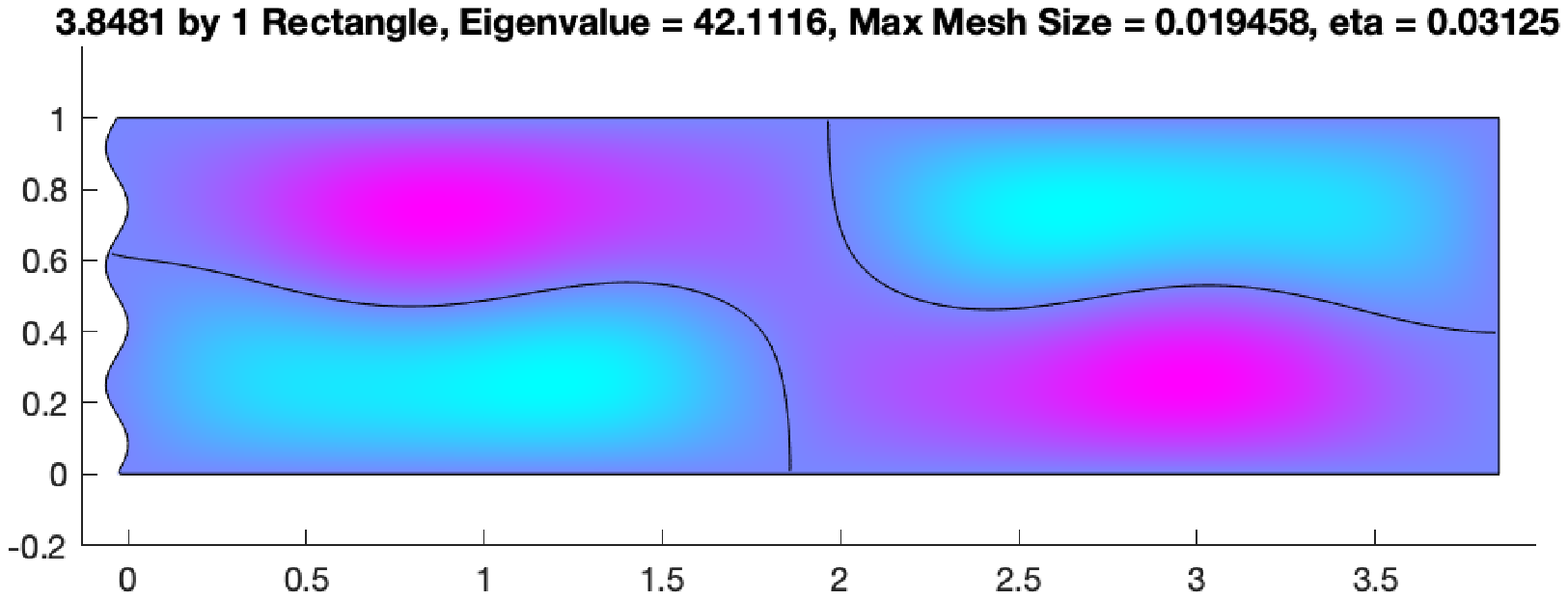}\\
\includegraphics[width=60mm,trim={2.0cm 2.5cm 2.5cm 2.5cm},clip]{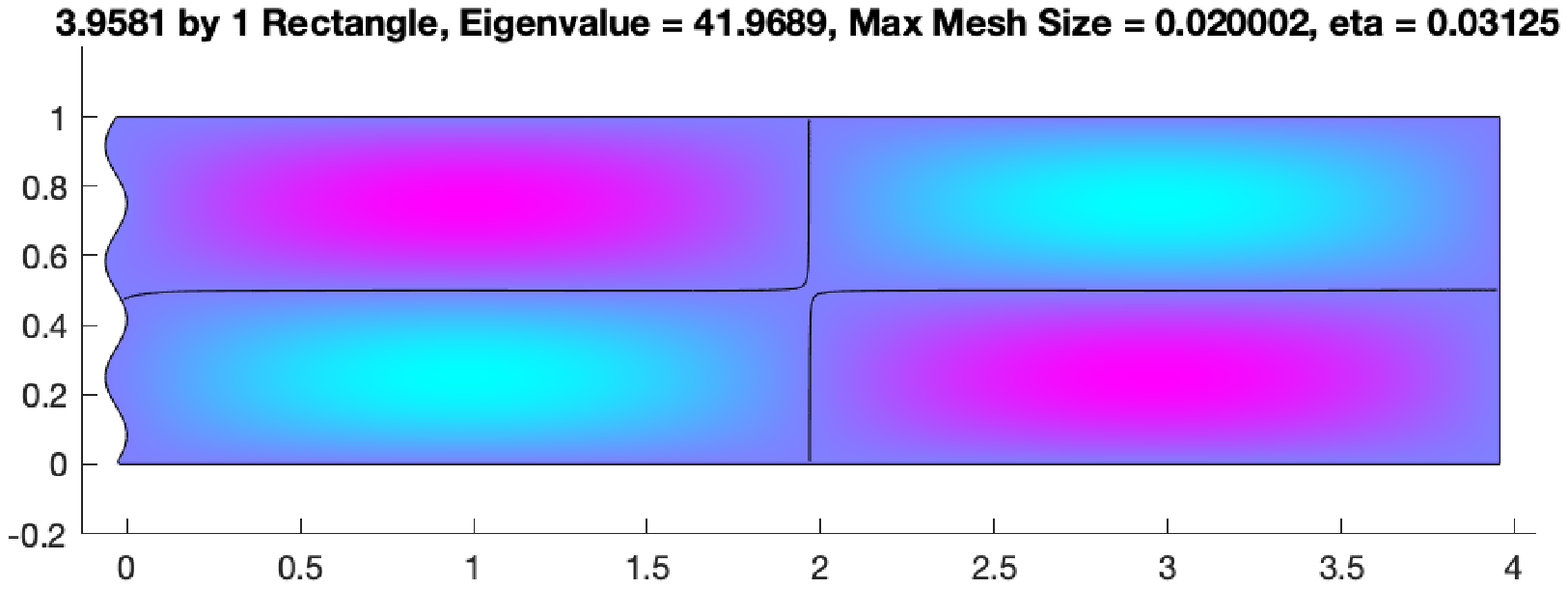}
     & \includegraphics[width=60mm,trim={2.0cm 2.5cm 2.5cm 2.5cm},clip]{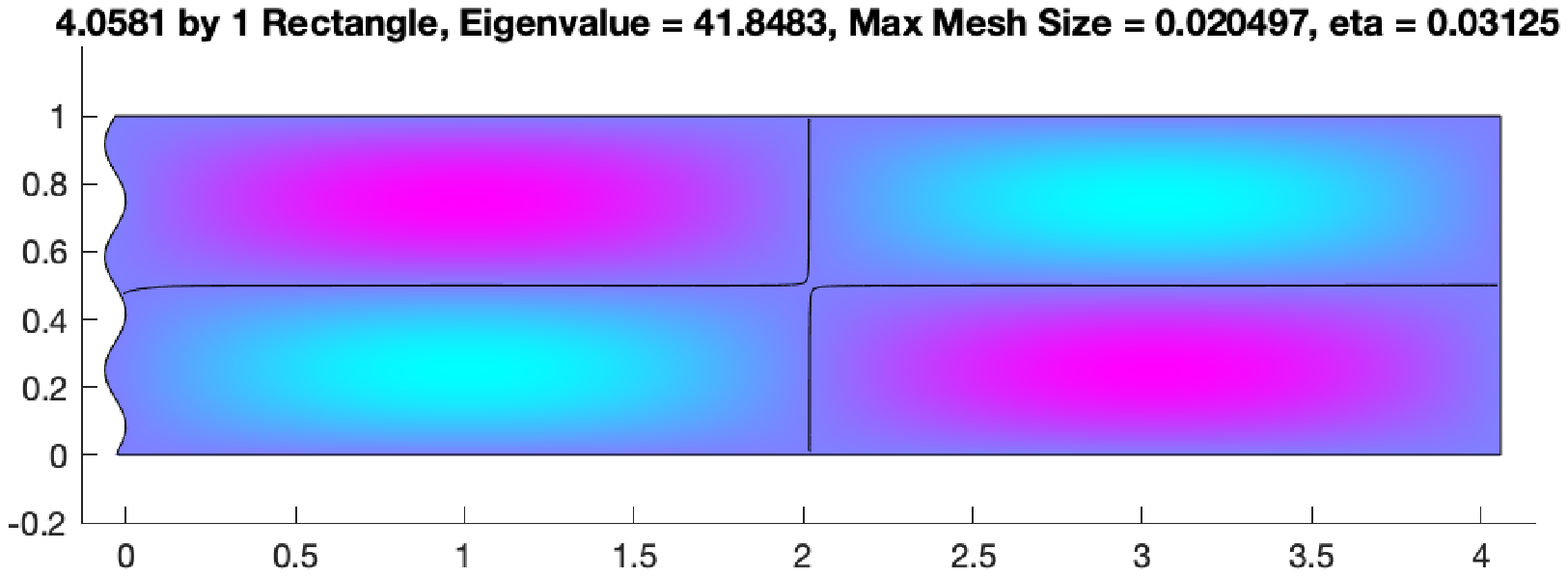}
\end{tabular}
\caption{Plots for $N$ near the resonant aspect ratio $N=\sqrt{15}$.}
\label{fig:resonant-N}
\end{figure}

\subsection{Nodal Set Gap Independence of $N$}
As shown in Theorem \ref{thm:nodal-gap-estimate}, the separation of the two branches of the nodal set is comparable to $\sqrt{\eta}$, independent of $N$. To verify this, we fix $\eta = \tfrac{1}{2}$, $\phi_L(y) = \sin(6\pi y)$, and plot the desired mode for various aspect ratios to see if the nodal gap remains comparable. Since the nodal set opening depends on the relative position of $N$ to a degenerate eigenvalue, we control for the resonance effect by choosing $N$ such that $\left|\sin\left(\mu_1 N\right)\right|$ is close to $1.$ This ensures that the expression for $v_1(x)$ in \eqref{eqn:v1} is well defined. Gradient descent/ascent is used to choose such $N$. The mesh granularity is tuned for each domain to ensure that the maximum mesh element sizes are comparable across all tested aspect ratios. Then, for each $N$ tested, the two sets of nodal points for each branch near the center are approximately identified by finding all mesh points $(x,y)$ near the extremal point of each branch, satisfying $|v(x,y)| < \varepsilon,$ where $\varepsilon \approx 10^{-7}$ varies slightly for each $N$. Given these two sets of nodal points on each branch, labeled $P, Q$, the opening distance is approximately calculated (up to numeric error) as $d := d(P,Q) = \min_{(p,q) \in P \times Q} d(p,q)$, where $d(\cdot,\cdot)$ is the Euclidean metric. The extremal points on each branch are identified up to small error, and distances $d$ are reported below (see Figure \ref{fig:opening-independence}, end of document, and Table \ref{table:separation}).

The distances shown in Table \ref{table:separation} are comparable in order of magnitude across the various length-scales. This agrees with the opening of the nodal set described in Theorem \ref{thm:nodal-gap-estimate}, and note that our numerics are for moderately sized $N$. This is consistent with the fact that in several of our proofs, we did not need to take $N$ sufficiently large to obtain our needed results and estimates. More broadly, this suggests that for many perturbations our results will still hold for moderately sized $N$, as discussed in Remark \ref{Nrem}.

\begin{figure}[ht] 
\begin{tabular}{cc} 
  \includegraphics[width=60mm,trim={2.0cm 2.5cm 2.0cm 2.5cm},clip]{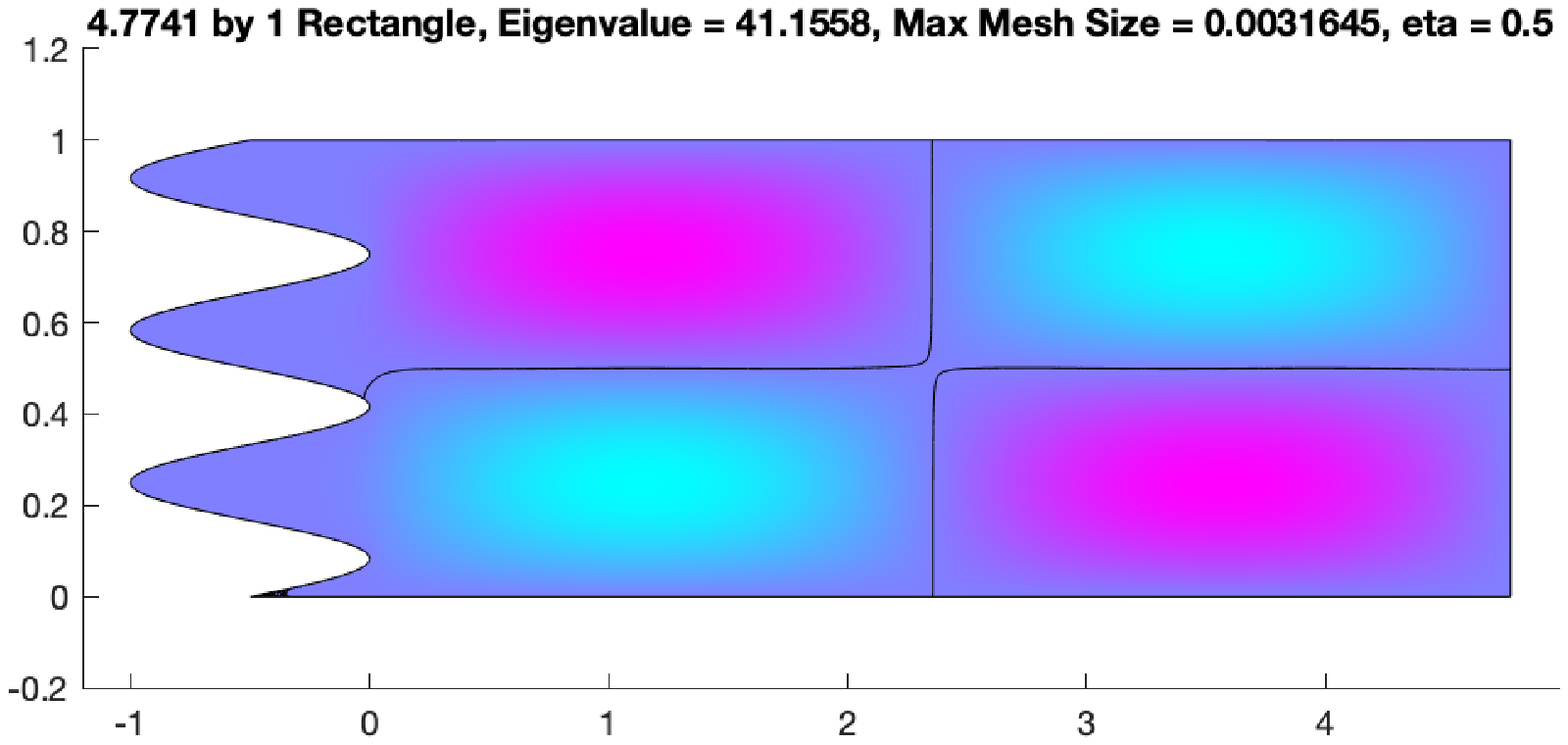} & \includegraphics[width=60mm,trim={2.0cm 2.5cm 2.0cm 2.5cm},clip]{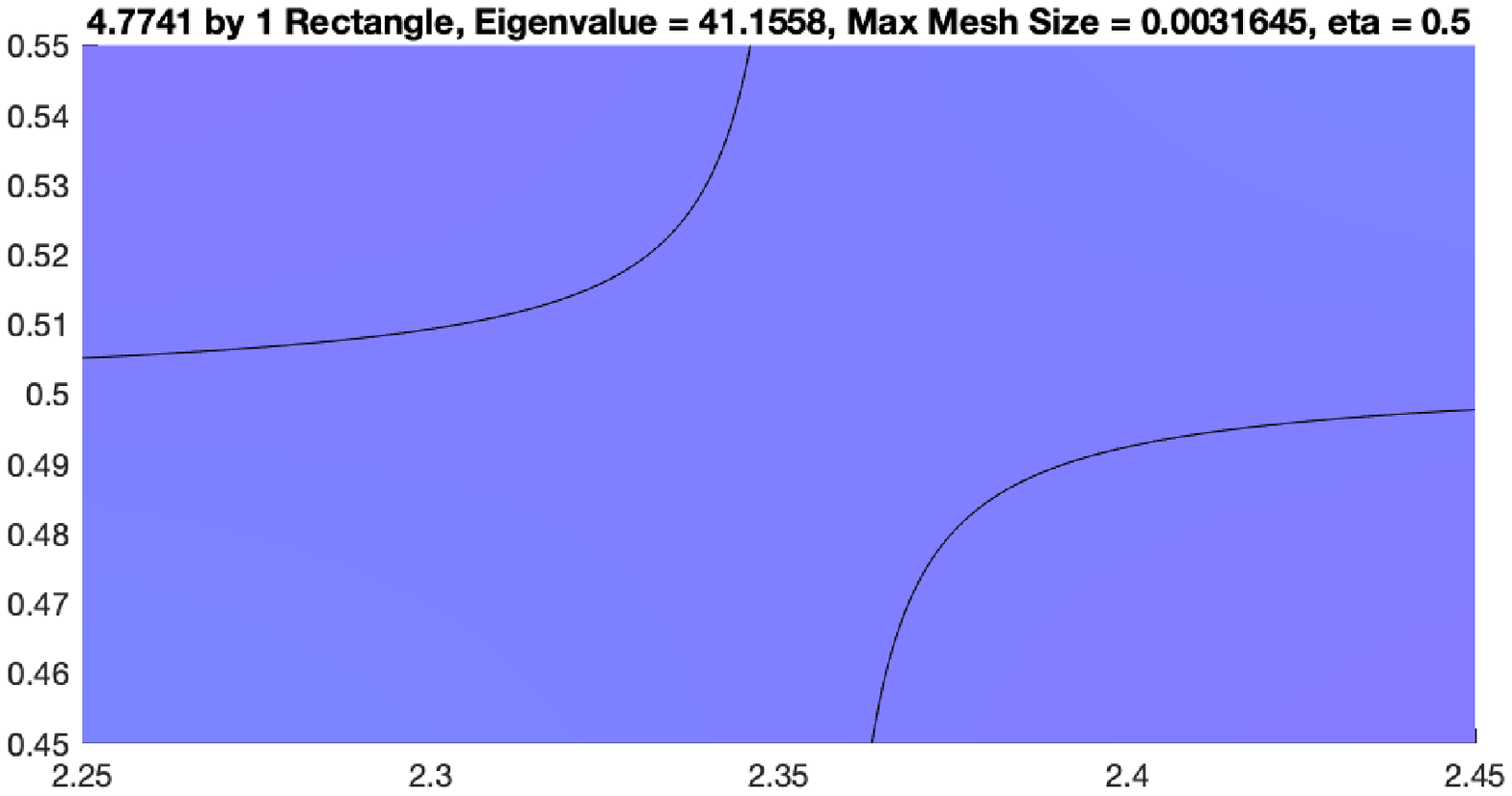} \\
(i.a) & (i.b) \\[6pt]
 \includegraphics[width=60mm,trim={2.0cm 2.5cm 2.0cm 2.5cm},clip]{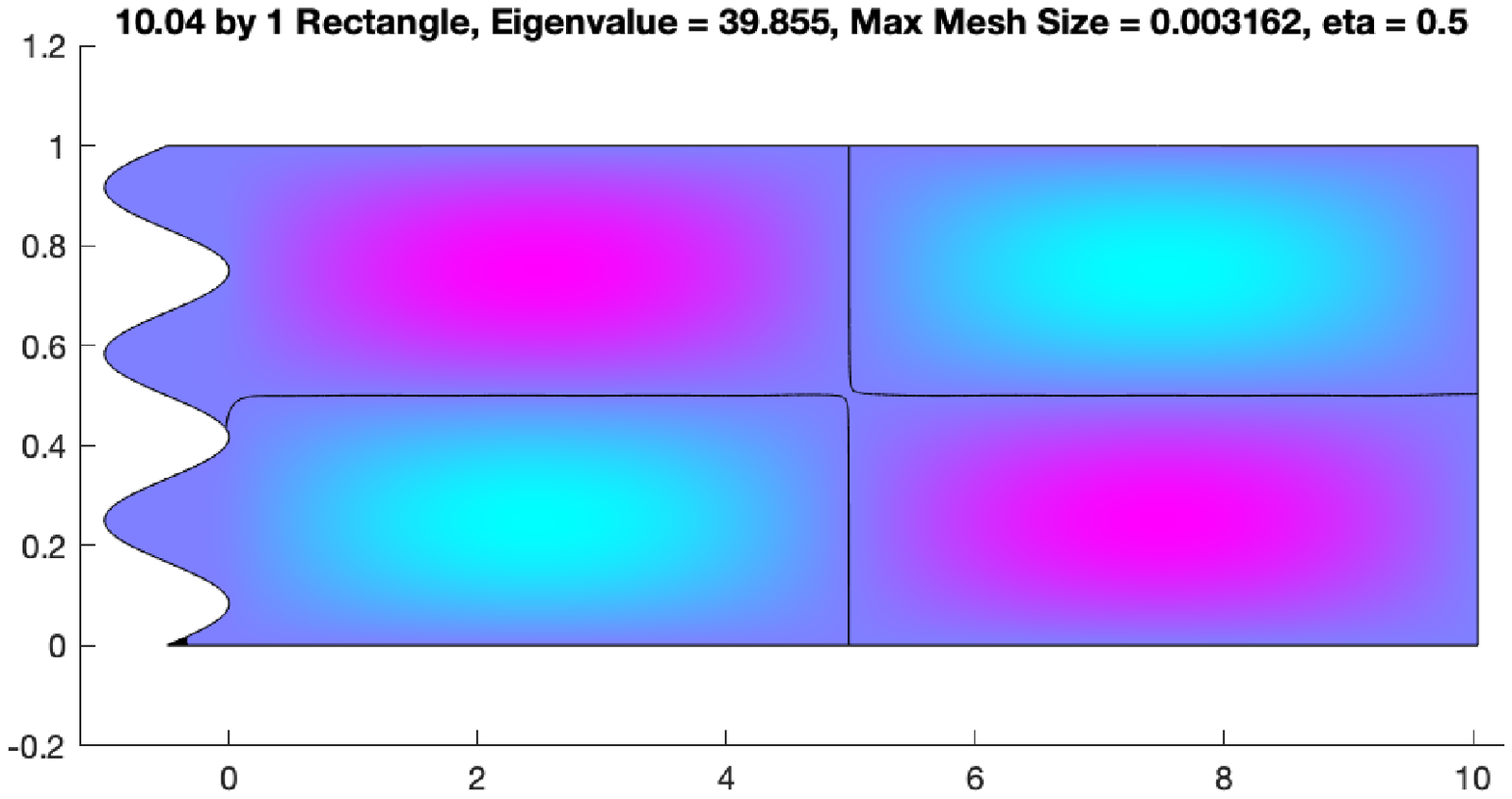} &  \includegraphics[width=60mm,trim={2.0cm 2.5cm 2.0cm 2.5cm},clip]{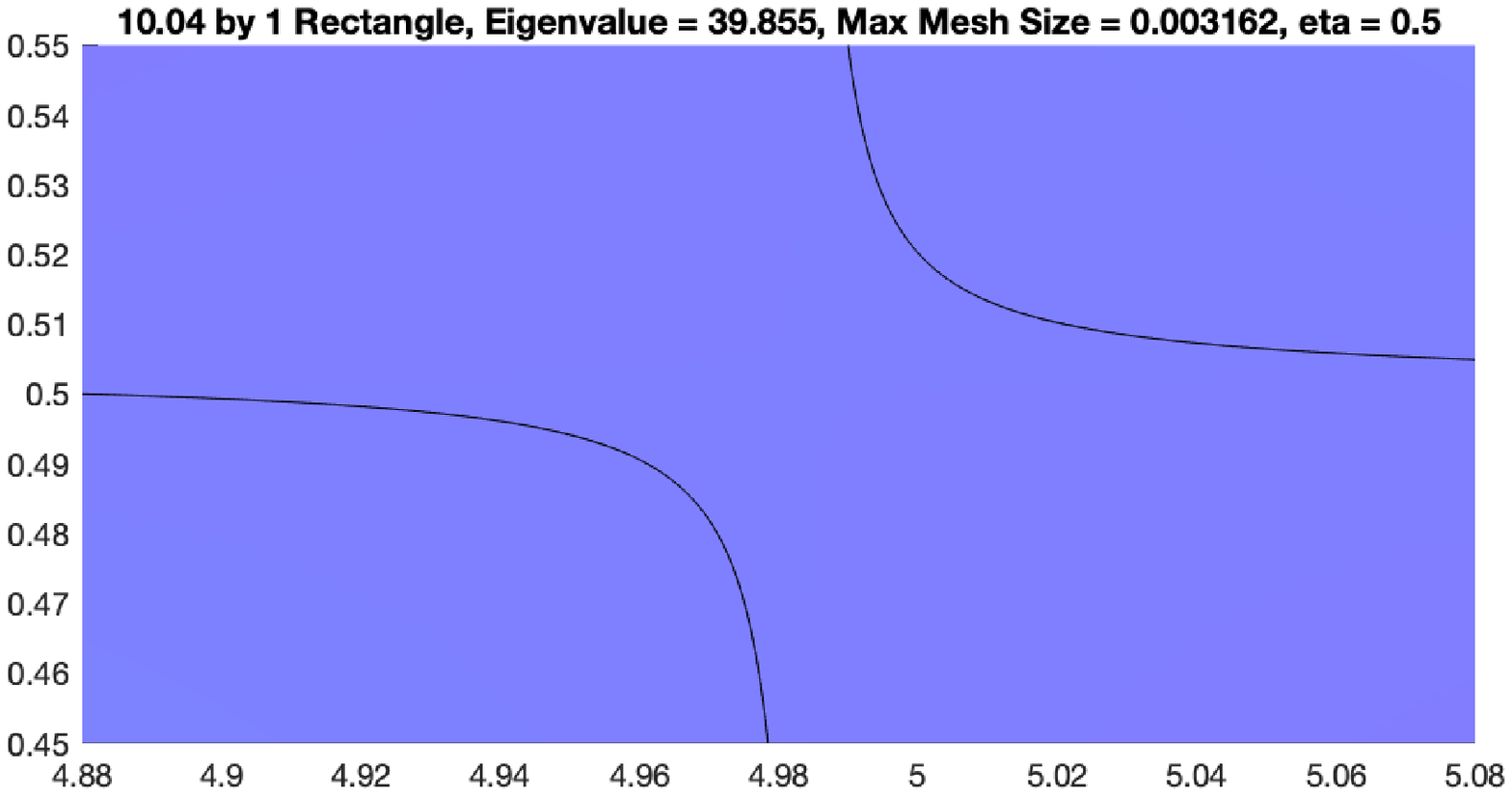}\\
(ii.a) & (ii.b)  \\[6pt]
 \includegraphics[width=60mm,trim={2.0cm 2.5cm 2.0cm 2.5cm},clip]{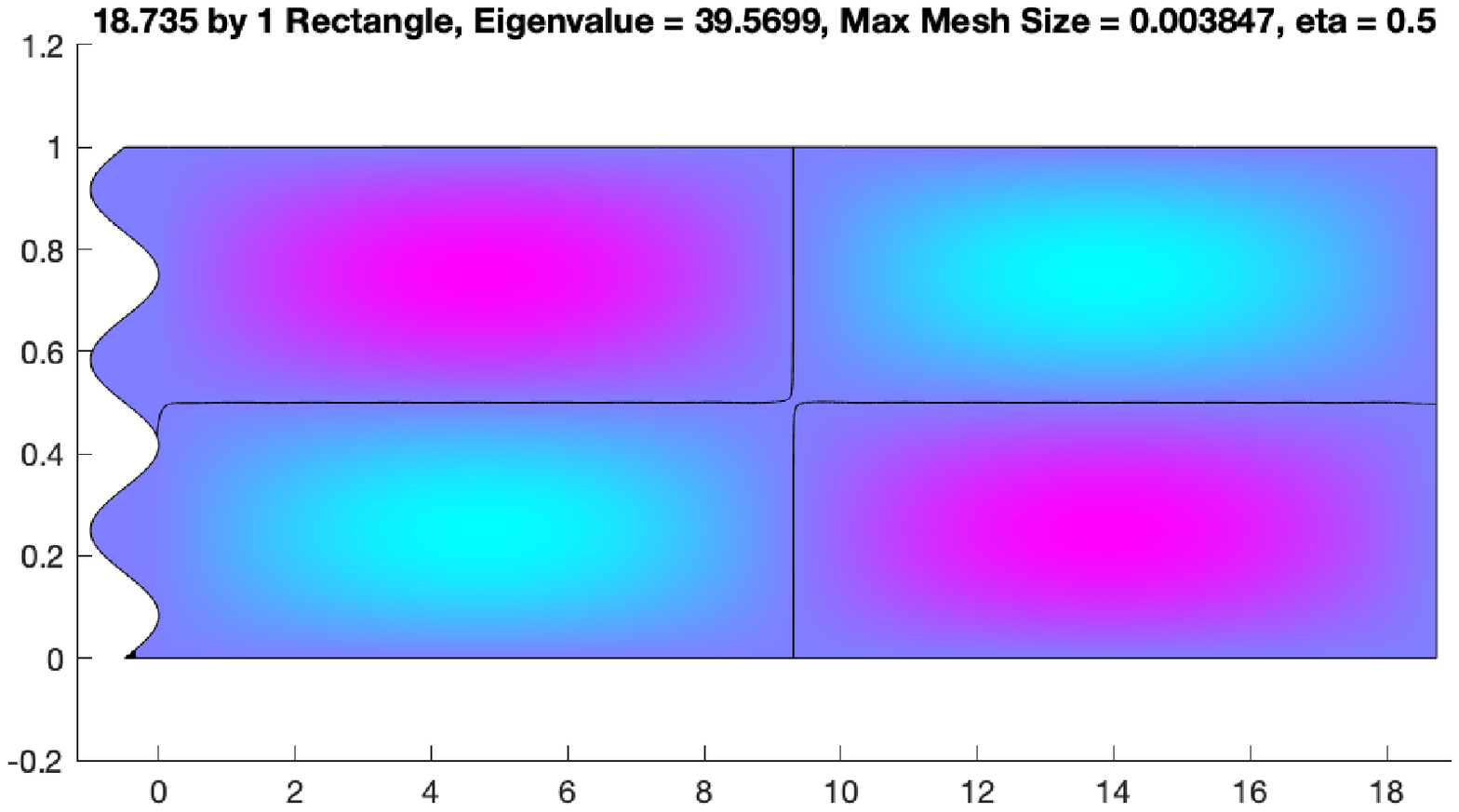} &  \includegraphics[width=60mm,trim={2.0cm 2.5cm 2.0cm 2.5cm},clip]{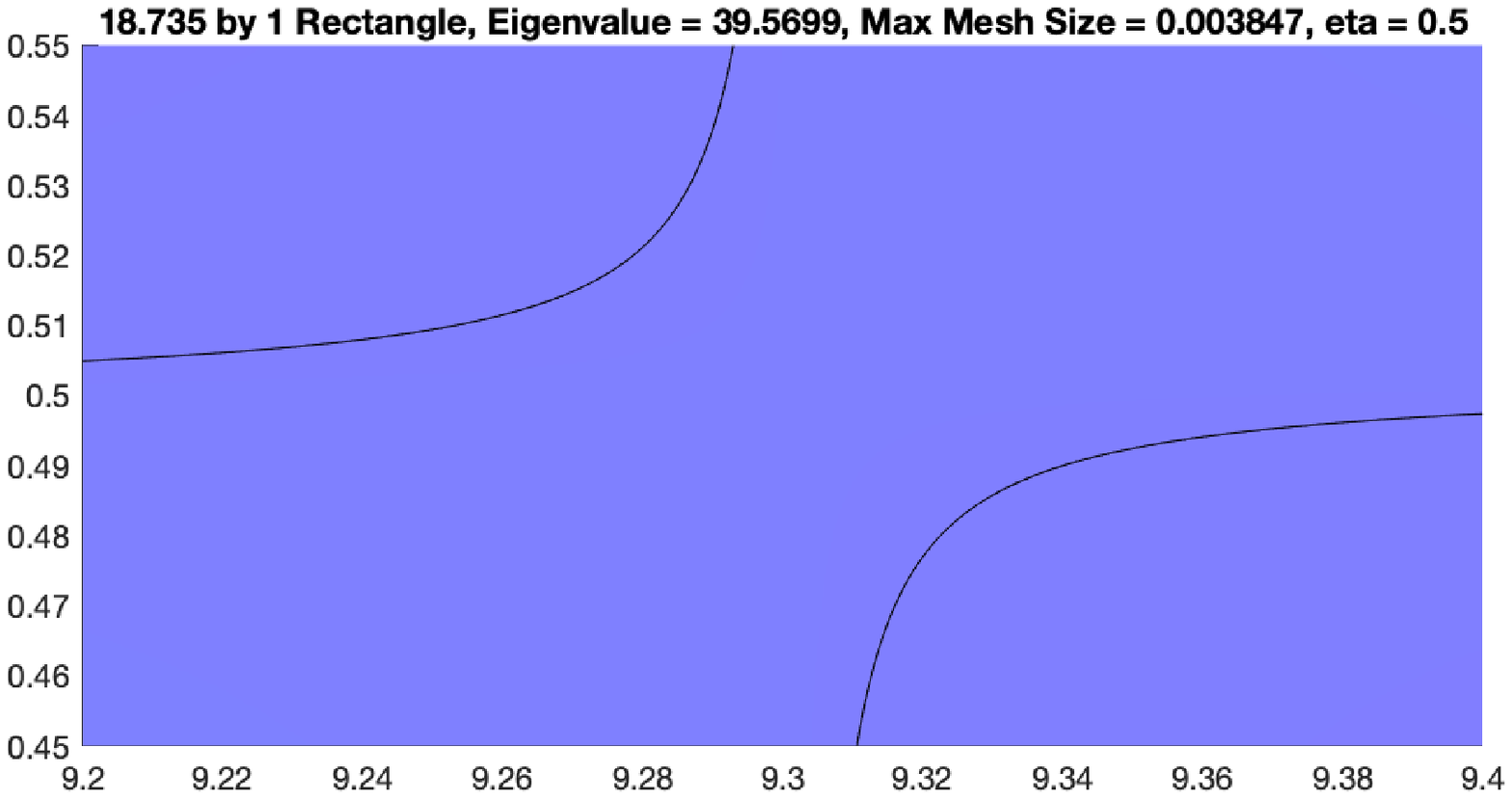}\\
(iii.a) & (iii.b) \\[6pt]
  \includegraphics[width=60mm,trim={2.0cm 2.5cm 2.0cm 2.5cm},clip]{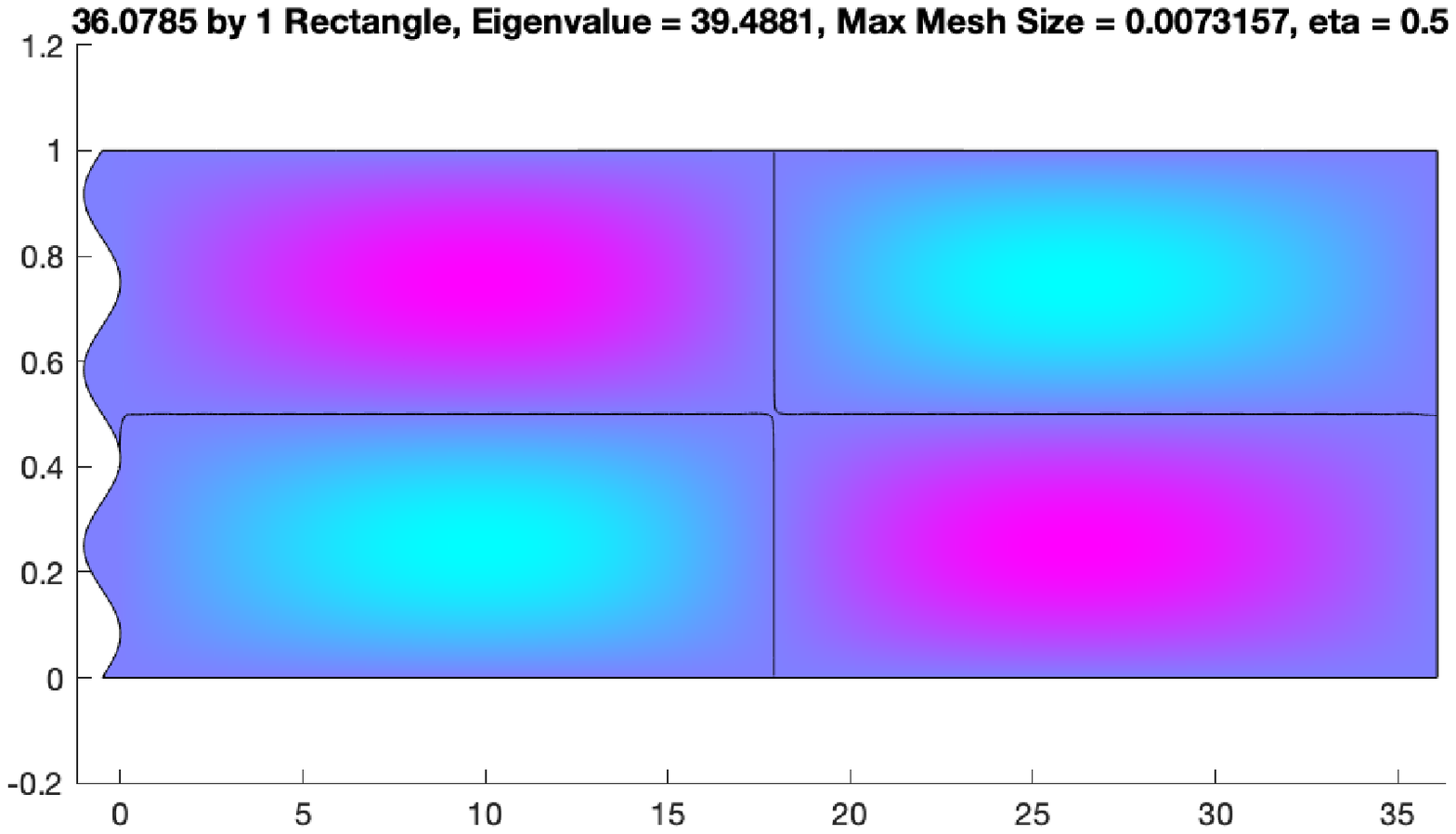} &  \includegraphics[width=60mm,trim={2.0cm 2.5cm 2.0cm 2.5cm},clip]{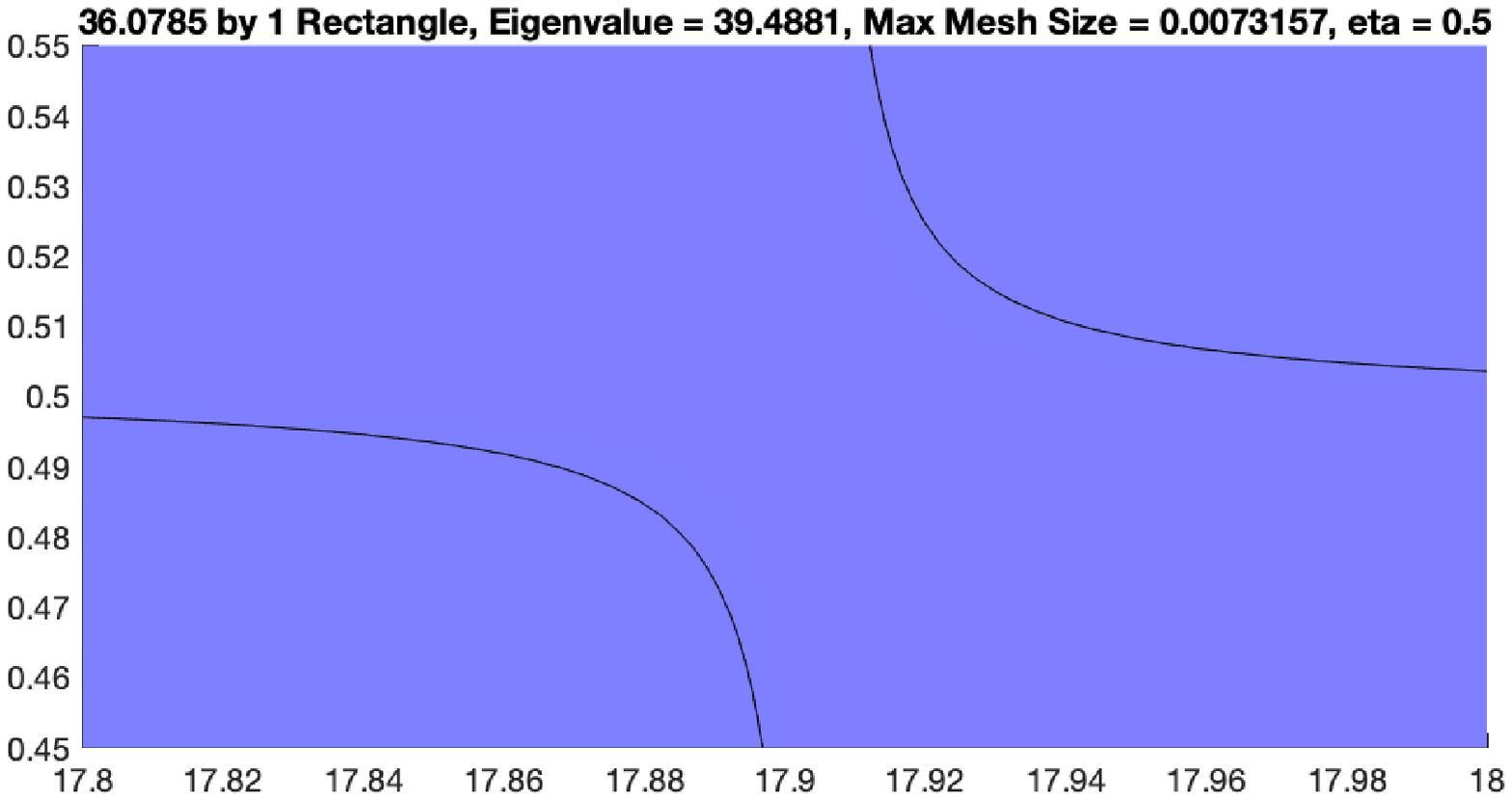}\\
(iv.a) & (iv.b)  \\[6pt]
\end{tabular}
\caption{Plots for varying aspect ratios with fixed boundary perturbation, as well as detail of the nodal opening.}
\label{fig:opening-independence}
\end{figure}

\begin{table}[ht] 
\begin{tabular}{c|cccc}
$N$ &  $d$  & Max. Mesh Size ($\times 10^{-3}$) & $\mu$  &  $\left|\sin(\mu_1 N) \right|$\\
\hline
 4.7741     &  0.0552    & 3.1645    & 41.1558     & 1.0000 \\
 10.0400    &  0.0447    & 3.162     & 39.8550      & 1.0000 \\
 18.7350    &  0.0545     & 3.847    & 39.5699     & 1.0000 \\
 36.0785    &  0.0480     & 7.3157    & 39.4881      & 1.0000
\end{tabular}
\caption{Computed separation distances $d$, with mesh sizes, eigenvalues $\mu$, and the size of resonance effect for each $N$. Recall $\mu_1^2 = \mu - \pi^2.$} \label{table:separation}
\end{table}

\newpage \,
\newpage \,

\bibliographystyle{plain}
\bibliography{nodal}

\end{document}